\documentclass[12pt]{amsart}

\usepackage{t1enc}
\usepackage[latin2]{inputenc}
\usepackage[english]{babel}
\usepackage{amssymb}
\usepackage{amsthm}
\usepackage{amsmath}
\usepackage{verbatim}
\usepackage{textcomp}
\usepackage{a4wide}

\usepackage[dvips]{graphicx}

\theoremstyle{plain}
\newtheorem{theorem}{Theorem}[section]
\newtheorem{lemma}[theorem]{Lemma}
\newtheorem{proposition}[theorem]{Proposition}
\newtheorem{corollary}[theorem]{Corollary}

\theoremstyle{definition}

\newtheorem{example}[theorem]{Example}
\newtheorem{examples}[theorem]{Examples}
\newtheorem{remark}[theorem]{Remark}

\DeclareMathOperator{\Aut}{Aut}

\DeclareMathOperator{\id}{id}

\DeclareMathOperator{\Age}{Age}
\DeclareMathOperator{\Flim}{Flim}
\DeclareMathOperator{\Cycl}{Cycl}
\DeclareMathOperator{\Betw}{Betw}
\DeclareMathOperator{\Sep}{Sep}

\newcommand{\N}{\mathbb N}
\newcommand{\im}{\mathrm{im}}
\newcommand{\emb}{\mathrm{e}}

\newcommand{\K}{\mathcal K}
\newcommand{\dom}{\textrm{dom}}

\date{{December 6, 2013}\\
\small Mathematics Subject Classifications: 03C15, 37B05, 05C55}

\begin{document}

\keywords{finite flow, Ramsey, amenable, measure concentration}

\title{Topological dynamics of unordered Ramsey structures}


\author{Moritz M\"{u}ller}
\address{Kurt G\"odel Research Center (KGRC), Vienna, Austria.}
\email{moritz.mueller@univie.ac.at}

\author{Andr\'as Pongr\'acz}
\address{Laboratoire d'Informatique de l'\'Ecole Polytechnique (LIX), Palaiseau, France. }
\email{andras.pongracz@lix.polytechnique.fr}
\thanks{The second author has received funding from the European Research Council under the European Community's Seventh Framework Programme (FP7/2007-2013 Grant Agreement no. 257039).}

\begin{abstract}
In this paper we investigate the connections between Ramsey properties of Fra\"{\i}ss\'{e} 
classes $\K$ and the universal minimal flow $M(G_\K)$ of the automorphism group $G_\K$
of their Fra\"{\i}ss\'{e} limits. As an extension of a result of Kechris, Pestov and Todorcevic \cite{kpt} 
we show that if the class $\K$ has finite Ramsey degree 
for embeddings, then this degree equals the size of $M(G_\K)$. 
We give a partial answer to a question of Angel, 
Kechris and Lyons \cite{akl} showing that 
if $\K$ is a relational Ramsey class and $G_\K$ is amenable, then $M(G_\K)$ admits a unique
invariant Borel probability measure that is concentrated on a unique generic orbit.
\end{abstract}

 \maketitle

\section{Introduction}

With a Fra\"iss\'e class of finite structures $\K$ one can associate in a natural way a topological group $G_\K$, namely, the automorphism group of the Fra\"iss\'e limit of~$\K$. 
For example, the
Fra\"iss\'e limit of finite dimensional vector spaces over a 
fixed finite field $F$ is the $\aleph_0$-dimensional vector space $V_{\infty,F}$ over 
$F$ with automorphism group $\textup{GL}(V_{\infty,F})$.
The groups of the form $G_\K$ are precisely the Polish groups that are non-archimedian in the sense 
that they have a basis at the identity consisting of open subgroups (\cite{kb}). 

In \cite{kpt} Kechris, Pestov and Todorcevic  developed a ``duality theory'' \cite[$\S4$(A)]{kechris} 
linking finite combinatorics of $\K$ with topological dynamics of $G_\K$, more precisely, it links 
combinatorial properties of $\K$ with properties of the universal minimal $G_\K$-flow $M(G_\K)$. For groups 
of the form $G_\K$ the flow $M(G_\K)$ is an inverse limit of metrizable $G_\K$-flows (cf.~\cite[T1.5]{kpt}), 
and in many interesting cases is metrizable itself. If so, $M(G_\K)$ 
either has the size of the continuum or else is finite \cite[$\S 1$(E)]{kpt}.
An extreme case is that $M(G_\K)$ is a single point, that is, $G_\K$ is extremely amenable.
It is shown in \cite{kpt} that for ordered $\K$ this happens if and only if $\K$ is Ramsey.
For example, $V_{\infty,F}$ together with the so-called ``canonical order'' has an extremely amenable automorphism group.

We give a characterization of $M(G_\K)$ having an arbitrary finite cardinality in terms of Ramsey properties of $\K$. 
Namely, we use Fouch\'e's Ramsey degrees \cite{fouche,fouche2,fouche3} and show that  $M(G_\K)$ has finite 
size $d$ if and only if $\K$ has Ramsey degree $d$ (Theorem~\ref{thm:char}). We do not 
assume $\K$ to be ordered, but use Ramsey degrees {\em for embeddings} 
instead (see~e.g.\ \cite{nesetril,bodirsky}). These coincide with the usual Ramsey degrees on rigid 
structures, so our characterization generalizes the mentioned result of~\cite{kpt} and so does its proof. As a corollary we get (Corollary~\ref{cor:asymptotic})
that Ramsey degrees for embeddings are asymptotic in the sense that 
all structures in $\K$ have degree at most $d$ if all {\em large enough} structures have degree at most $d$ (i.e.\ every structure 
embeds into one of degree at most $d$).

Given an appropriate (unordered) class $\K$ one can first produce a so-called reasonable order expansion $\K^*$ whose Fra\"iss\'e 
limit expands the limit of $\K$ by a (linear) order $<^*$. The group $G_\K$ acts naturally on 
orders and one gets a $G_\K$-flow $X_{\K^*}$ as the orbit closure $\overline{G_\K\cdot <^*}$. Again, as shown in \cite{kpt}, 
minimality of this flow corresponds to a combinatorial property of $\K^*$ called the ordering property (cf.~\cite{nesetril}), 
and indeed $X_{\K^*}$ is $M(G_\K)$ if and only if $\K^*$ additionally is Ramsey.\footnote{See \cite{nguyen2} for a discussion of how to characterize universality alone.} 
Moreover, the Ramsey degree of $A\in\K$ equals the number of non-isomorphic order expansions it 
has in $\K^*$(\cite[$\S10$]{kpt},\cite[$\S 4$]{nguyen}).

For example, the universal minimal $\textup{GL}(V_{\infty,F})$-flow is the orbit closure of the canonical order. 
This canonical order is forgetful in the sense that any finite dimensional $F$-vector space gets up to isomorphism 
only one order expansion, so Ramsey degrees are 1 in this case. The Ramsey degrees {\em for embeddings} 
on the other hand are unbounded (cf.~Corollary~\ref{cor:emb2}). In general, the relationship between the two degrees is not trivial. 
We show that if a Ramsey class in a relational language has finite Ramsey degree for embeddings, then this degree must be 
a power of 2 (Theorem~\ref{thm:mystery}).

Recently, Angel, Kechris and Lyons \cite{akl} extended the duality theory to other important properties of $M(G_\K)$, 
namely whether or not there is a (unique) $G_\K$-invariant Borel probability measure on $M(G_\K)$. 
In this case, the group $G_\K$ is called amenable (uniquely ergodic), and this happens if and 
only if all minimal $G_\K$-flows admit such a (unique)
measure (\cite[P8.1]{akl}). 
For example, $\textup{GL}(V_{\infty,F})$ is uniquely ergodic.

The $G_\K$-flows $X_{\K^*}$ have a generic (i.e.\ comeager) orbit $G_\K\cdot <^*$ which is in 
fact dense $G_\delta$ \cite[P14.3]{akl}. In many examples, 
a
$G_{\K}$-invariant measure on $M(G_{\K})$, if exists, 
turns out to be concentrated on this generic orbit. However, answering a question in~\cite[Q15.3]{akl}, 
Zucker \cite[T1.2]{zucker} showed that the measure on $M(\textup{GL}(V_{\infty,F}))$ is {\em not} concentrated on the generic orbit.

We show that such counterexamples rely on the language containing function symbols. More precisely,
we show that if $\K$ is Ramsey over a relational language and $G_\K$ is amenable, then $G_\K$ is uniquely ergodic 
and the unique $G_\K$-invariant 
Borel probability measure on $M(G_\K)$ is indeed concentrated on a dense $G_\delta$ orbit (Theorem~\ref{thm:concentr}).



\section{Preliminaries}

\subsection{Notation} For $k\in \N$ we let $[k]$ denote $\{0,\ldots,k-1\}$ and understand $[0]=\emptyset$. 
If $X,Y$ are sets, $f$ a function from $X$ to $Y$, $n\in \N$ and $Z\subseteq X^n$ we write $f(Z)$ for the 
set $\{f(\bar x)\mid \bar x\in Z\}$ where $f(\bar x)$ denotes the tuple $ (f(x_0),\ldots, f(x_{n-1}))$ 
for $\bar x=(x_0,\ldots, x_{n-1})\in X^n$.
For $X_0\subseteq X$ we let $f\upharpoonright X_0$ denote the restriction of $f$ to $X_0$; for a relation $Z$ 
as above, $Z\upharpoonright X_0$ denotes $Z\cap(X_0^n)$. The identity on $X$ is denoted by $\id_X$.

 
\subsection{Fra\"iss\'e theory}\label{sec:fraisse}
Fix a countable language $L$. We let $A,B,\ldots$ range over ($L$-)struc\-tures. The distinction between 
structures and their universes are blurred notationally. We speak of {\em relational} structures and classes of structures if the underlying language $L$ is relational. We write $A\le B$ to indicate that there exists an embedding from $A$ into $B$, and we let $B^A$ denote the set of embeddings from $A$ into $B$. 

The {\em age} $\Age(F)$ of  a structure $F$ is the class of finitely generated structures which embed into $F$. A structure $F$ is {\em locally finite} if its finitely generated substructures are finite. 
For $A\in\Age(F)$ we call $F$  {\em $A$-homogeneous} if for all $a,a'\in F^A$ there is $g\in\textup{Aut}(F)$ such that $g\circ a=a'$. If $F$ is $A$-homogeneous for all $A\in\Age(F)$, it is {\em (ultra-)homogeneous}. 

A structure $F$ is {\em Fra\"iss\'e} if it is countably infinite, locally finite and homogeneous. The age $\K:=\Age(F)$ of a Fra\"iss\'e structure $F$ 
\begin{itemize}\itemsep=0pt
\item[--] is {\em hereditary:} for all $A,B$, if $A\le B$ and $B\in\K$, then $A\in\K$; 
\item[--] has {\em joint embedding:} for all $A,B\in\K$ there is $C\in\K$ such that both $A\le C$ and $B\le C$; 
\item[--] has {\em amalgamation:} for all $ A, B_0, B_1\in\K$ and $a_0\in B_0^A, a_1\in  B_1^A$ there are $ C\in\K$ and $b_0\in C^{B_0},b_1\in  C^{ B_1}$ such that $b_0\circ a_0=b_1\circ a_1$.
\end{itemize}
A class $\K$ of finite structures that has these three properties and for every $n\in\N$ contains a structure (with universe) of size at least $n$, is a {\em Fra\"iss\'e class}. The following 
is well-known~\cite[T4.4.4]{zieglerbuch}:

\begin{theorem}[Fra\"iss\'e 1954] \label{thm:fraisse}
For every Fra\"iss\'e class $\K$ there exists a Fra\"iss\'e structure $F$ with age $\K$. 
\end{theorem}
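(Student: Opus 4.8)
The plan is the classical chain (``back-and-forth'') construction. Since $L$ is countable and every member of $\K$ is finite, there are only countably many isomorphism types in $\K$; fix representatives $C_0,C_1,\ldots$. I would build an increasing chain $A_0\le A_1\le A_2\le\cdots$ of members of $\K$, treating the embeddings as inclusions, and take $F:=\bigcup_n A_n$. Along the chain two kinds of demands are interleaved by a bookkeeping that dovetails through all tasks. A \emph{universality} demand, handled via joint embedding, ensures that each $C_i$ embeds into some $A_n$: given $A_n$, joint embedding yields $A_{n+1}\in\K$ with $A_n\le A_{n+1}$ and $C_i\le A_{n+1}$. An \emph{extension} demand, handled via amalgamation, secures the following \emph{extension property} of $F$: for all $A,B\in\K$, every $e\in B^A$, and every copy $a\in A_n^A$ already present, some embedding of $B$ into a later $A_m$ extends $a$ along $e$. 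Concretely, applying amalgamation to $A$, $B_0:=A_n$, $B_1:=B$ with the maps $a\in B_0^A$ and $e\in B_1^A$ produces $A_{n+1}\in\K$ together with $b_0\in A_{n+1}^{A_n}$ and $b_1\in A_{n+1}^B$ satisfying $b_0\circ a=b_1\circ e$; identifying $A_n$ with its image under $b_0$ turns $b_1$ into the required extension of $a$ along $e$.

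Because each $A_n$ is finite, at every stage there are only finitely many copies $a\in A_n^A$ to consider for each of the countably many pairs $(A,B)$ with $e\in B^A$, so a single dovetailing enumeration addresses all demands. I would then verify the three defining properties of a Fra\"iss\'e structure for $F:=\bigcup_n A_n$. Countability is clear, and $F$ is infinite because $\K$ contains structures of every finite size, so the chain can be kept strictly increasing. For local finiteness, note that any finite subset of $F$ lies in some $A_n$; since $A_n$ is a substructure of $F$, the substructure it generates is the same whether computed in $A_n$ or in $F$, hence finite. For the age, $\Age(F)\subseteq\K$ follows from hereditariness, as every finitely generated substructure of $F$ sits inside some $A_n\in\K$, while $\K\subseteq\Age(F)$ is exactly the content of the universality demands.

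It remains to deduce homogeneity from the extension property, and this is the step I would treat most carefully. Given $A\in\Age(F)$ and $a,a'\in F^A$, I would produce $g\in\Aut(F)$ with $g\circ a=a'$ by a back-and-forth argument: enumerate $F=\{f_0,f_1,\ldots\}$ and build an increasing chain of finite partial isomorphisms starting from the one matching $a$ with $a'$, forcing $f_k$ into the domain at the ``forth'' steps and into the range at the ``back'' steps. Each such one-point enlargement is precisely an instance of the extension property, applied to the substructure generated by the current finite domain and the substructure generated by it together with the new point, along the inclusion between them; the union of the chain is the desired automorphism. The genuine obstacle here is organizational rather than conceptual: arranging the bookkeeping so that every extension demand is met cofinally along the chain while staying inside $\K$, and checking that the amalgamation step really yields a \emph{one-step} extension of the identity on $A_n$. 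Once the extension property is in place, both universality and homogeneity follow routinely.
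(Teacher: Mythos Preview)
Your argument is the standard, correct construction of the Fra\"iss\'e limit and there is nothing to fault in it. Note, however, that the paper does not actually prove Theorem~\ref{thm:fraisse}: it is stated as a well-known result with a reference to \cite[T4.4.4]{zieglerbuch}, and only the uniqueness (via back-and-forth) is mentioned in the subsequent sentence. So there is no ``paper's own proof'' to compare against; your write-up simply supplies the classical proof that the paper omits.
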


A standard back-and-forth argument shows that the structure $F$ in Theorem~\ref{thm:fraisse} is unique up isomorphism; 
it is called the {\em Fra\"iss\'e limit of $\K$} and denoted by $\Flim(\K)$. 

We mention some standard examples:

\begin{examples}\label{exas:fraisseclasses} The Fra\"iss\'e limit of the class of linear orderings is the rational order $(\mathbb Q,<)$. 
The Fra\"iss\'e limit of the class of finite Boolean algebras is the countable atomless Boolean algebra $B_\infty$. 
The Fra\"iss\'e limit of the class of finite graphs is the random graph $R$. The Fra\"iss\'e limit of 
the class of finite vector spaces over a fixed finite field $F$ is the vector space $V_{\infty,F}$ of 
dimension $\aleph_0$ over $F$.
\end{examples}

We refer to \cite{cameron,cherlin,mcph} as surveys on homogeneous structures.

\subsection{Ramsey degrees} Write ${B \choose A}$ for the set of substructures of $B$ which are isomorphic to $A$. 
Note that ${B'\choose A}\subseteq{C\choose A}$ whenever $B'\in{C\choose B}$.
If $k,d\in\N$ then $C\to (B)^A_{k,d}$ means that for every colouring $\chi:{C\choose A}\to [k]$ 
there exists $B'\in{C\choose B}$ such that $|\chi({B'\choose A})|\le d$. The {\em Ramsey degree} of $A$ in
 a class 
of structures $\K$ is the least $d\in\N$ such that for all $B\in \K$ and $k\ge 2$ there is $C\in\K$ 
such that $C\to (B)^A_{k,d}$ -- provided that such a $d$ exists; otherwise it is $\infty$. 
Taking the supremum over $A\in\K$ gives the Ramsey degree of $\K$, and 
the Ramsey degree of a structure $F$ is understood to be the Ramsey degree of $\Age(F)$; if 
this degree is 1, then $\K$ resp. $F$ are simply called {\em Ramsey}. 

\begin{examples} $(\mathbb Q,<)$, $B_\infty$ and $V_{\infty,F}$ are Ramsey~\cite{kpt}. 
The random graph $R$ has Ramsey degree $\infty$; indeed, a finite graph $G$ has Ramsey 
degree $|G|!/|\Aut(G)|$ in the class of finite graphs~\cite[$\S10$]{kpt}.
\end{examples}

%
%
%

Ramsey degrees have been introduced by Fouch\'e in \cite{fouche}. We refer to 
the surveys \cite{grs, nesetrilsurvey} on Ramsey theory.

\subsection{Topological dynamics} With a Fra\"iss\'e class $\K$ we associate the topological group
$$
G_\K:=\Aut(\Flim(\K)),
$$
the identity having basic neighborhoods
$$
G_{(A)}:=\{g\in G_\K\mid g\upharpoonright A=\textup{id}_A\}
$$
for all finite substructures $A$ of $\Flim(\K)$. For any topological group $G$ a {\em $G$-flow} is a continuous action 
$a:G\times X\to X$ of $G$ on a compact Hausdorff space $X$. 
When the action is understood we shall refer to $X$ as a $G$-flow 
and write $g\cdot x$ or $gx$ for $a(g,x)$. For $Y\subseteq X$ we
write $G\cdot Y:=\bigcup_{g\in G}gY=\bigcup_{y\in Y}Gy$ where $Gy:=\{gy\mid g\in G\}$ denotes the {\em orbit of $y$} and 
$gY:=\{gy\mid y\in Y\}$.

\begin{example}\label{exa:lo} Let $G=\Aut(F)$ for a countable structure $F$. 
The {\em space of linear orders (on $F$)} is 
$\textit{LO}:=\{R\subseteq F^2\mid R \textrm{ is a linear order on } F\}$ with topology given by basic open sets
$\{R\mid \ R_0 \subseteq R\}$ for $R_0$ a linear order on a finite 
subset $A$ of $F$. This space is compact and Hausdorff, and a $G$-flow with 
respect to $(g,R)\mapsto g(R)$, the {\em logic action} of $G$ on $\textit{LO}$.
\end{example}

A subset $Y\subseteq X$ is {\em $G$-invariant} if $G\cdot Y\subseteq Y$. 
Closed $G$-invariant subsets $Y$ are $G$-flows with respect to the restriction 
of the action. Such $G$-flows are {\em subflows} of $X$. 
The flow $X$ is {\em minimal} if $X$ and $\emptyset$ are its only subflows, 
that is, if and only if every orbit is dense. By Zorn's lemma, every $G$-flow contains a minimal subflow.
%
%
A {\em homomorphism (isomorphism)} of a $G$-flow $X$ into another $Y$ is a continuous (bijective) 
$G$-map $\pi:X\to Y$; being a $G$-map means that $\pi(g\cdot x)=g\cdot \pi(x)$ for all $g\in G,x\in X$.

The following is well-known (cf.~\cite[$\S 3$]{usp}). 

\begin{theorem} For every  Hausdorff  topological group $G$ there exists a minimal $G$-flow $M(G)$ which is {\em universal} in the sense that for every minimal $G$-flow $Y$ there is a homomorphism from $X$ into $Y$. Any two universal minimal $G$-flows are isomorphic. 
\end{theorem}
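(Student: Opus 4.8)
The plan is to separate existence from uniqueness and to reduce both to standard facts about the greatest ambit and the Ellis enveloping semigroup. Recall that a \emph{$G$-ambit} is a $G$-flow $(X,x_0)$ with a distinguished point whose orbit $Gx_0$ is dense. First I would construct the \emph{greatest ambit} $(S,s_0)$ of $G$: letting $\mathcal A$ be the commutative $C^*$-algebra of bounded right-uniformly continuous functions $G\to\mathbb C$, its Gelfand spectrum $S$ is compact Hausdorff, the right-translation action of $G$ on $\mathcal A$ dualises to a continuous $G$-action on $S$, and evaluation at the identity gives a point $s_0$ with dense orbit. This ambit is universal: for any ambit $(Y,y_0)$, precomposing the coordinate functions of $Y$ with $g\mapsto gy_0$ produces functions in $\mathcal A$, whence a (surjective) $G$-map $S\to Y$ taking $s_0$ to $y_0$.

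For existence of $M(G)$, by Zorn's lemma $S$ contains a minimal subflow $M$, and I claim it is universal. Given any minimal $G$-flow $Y$, choose $y_0\in Y$; minimality makes $(Y,y_0)$ an ambit, so the previous paragraph supplies a $G$-map $\phi\colon S\to Y$. The restriction $\phi\upharpoonright M$ has image a nonempty compact $G$-invariant subset of $Y$, hence equal to $Y$ by minimality; thus $\phi\upharpoonright M\colon M\to Y$ is the required homomorphism.

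For uniqueness I would use that $S$ additionally carries a multiplication extending that of $G$, making it a compact right-topological semigroup whose minimal subflows are exactly its minimal left ideals. Fix one such minimal left ideal $M_0$; by the above it is a concrete universal minimal flow. Let $M$ be any universal minimal flow. Universality of $M_0$ and of $M$ yield homomorphisms $\alpha\colon M_0\to M$ and $\beta\colon M\to M_0$, so $\beta\circ\alpha$ is a flow-endomorphism of $M_0$. The crux -- and the main obstacle -- is the coalescence lemma: \emph{every flow-endomorphism of $M_0$ is an automorphism.} This is where the enveloping semigroup is essential: one finds an idempotent $u\in M_0$ (Ellis's lemma, from compactness of $S$ together with Zorn applied to closed subsemigroups), checks that $uM_0$ is a group under the semigroup operation, and identifies the endomorphisms of $M_0$ with right translations by elements of this Ellis group -- maps that are bijective because the group is a group. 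Granting this, $\beta\circ\alpha$ is bijective, so $\alpha$ is injective; and $\alpha(M_0)$ is a nonempty compact $G$-invariant subset of the minimal flow $M$, hence all of $M$, so $\alpha$ is also surjective. A continuous bijection of compact Hausdorff spaces is a homeomorphism, and $\alpha$ is a $G$-map, so $\alpha$ is an isomorphism and $M\cong M_0$. In short, every step except the coalescence lemma is formal, and that lemma is the one genuinely non-trivial ingredient, resting on the Ellis semigroup rather than on the combinatorics of $\K$.
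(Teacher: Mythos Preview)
The paper does not actually prove this theorem: it is quoted as ``well-known'' with a reference to Uspenskij's survey, and no argument is given in the paper itself. So there is no in-paper proof to compare against.

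That said, your outline is the standard one and is essentially what one finds in the cited reference. The construction of the greatest ambit via the Gelfand spectrum of the $C^*$-algebra of bounded right-uniformly continuous functions, the extraction of a minimal subflow by Zorn, and the reduction of uniqueness to coalescence of a minimal left ideal in the Ellis semigroup are all correct and correctly sequenced. You rightly isolate the coalescence lemma as the only genuinely non-formal step, and your sketch of it (idempotent via Ellis's lemma, group structure on $uM_0$, identification of endomorphisms with right translations by elements of that group) is accurate. One small point worth tightening in a written-up version: the identification of flow-endomorphisms of $M_0$ with right multiplications by elements of the Ellis group $uM_0$ deserves a sentence of justification, since a priori an abstract $G$-map need not arise from the semigroup structure; the usual argument goes through the universality of the greatest ambit applied to the endomorphism's value at $u$. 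Apart from that, the proposal is sound.
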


%

An interesting case is that $|M(G)|=1$, equivalently, every $G$-flow $X$ has a fixed point, i.e. 
an $x\in X$ such that $G\cdot x=\{x\}$. In this case $G$ is called {\em extremely amenable}. 
Being {\em amenable} means that there exists a (Borel probability) measure $\mu$ on $M(G)$ which 
is {\em $G$-invariant} (i.e. $\mu(X)=\mu(g\cdot X)$ for every Borel $X\subset M(G)$ and $g\in G$). 
If there is exactly one such measure then
$G$ is {\em uniquely ergodic}. It is shown in \cite[P8.1]{akl} that for a uniquely ergodic  $G$ 
in fact every minimal $G$-flow has a unique $G$-invariant measure. 

We refer to \cite[$\S 1$]{kpt} for a survey on universal minimal flows.

\subsection{Duality theory} Let $<$ be a binary relation symbol.  
A class $\K^*$ of finite $L\cup\{<\}$-structures is {\em ordered} if each of its members 
has the form $(A,<^A)$ for a (linear) order $<^A$ (on $A$) and some finite $L$-structure $A$; the 
order $<^A$ is called a {\em $\K^*$-admissible} one (cf.~\cite{nesetril}). 

The following is \cite[T4.8]{kpt}.

\begin{theorem}\label{thm:extremeamenability} Assume that $\K^*$ is an ordered Fra\"iss\'e class.
Then $G_{\K^*}$ is extremely amenable if and  only if $\K^*$ is Ramsey.
\end{theorem}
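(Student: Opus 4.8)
The plan is to interpolate between the two conditions through an \emph{internal Ramsey property} of $F:=\Flim(\K^*)$, exploiting that $\K^*$ being ordered makes every $A\in\K^*$ rigid: then $\Aut(A)=\{\id_A\}$, the embeddings $F^A$ are in canonical bijection with the copies $\binom{F}{A}$, and, $F$ being $A$-homogeneous, $G:=G_{\K^*}$ acts transitively on $\binom{F}{A}$ (and, for the same reason, on $\binom{F}{B}$ for every finite $B\le F$). Say that $F$ has the \emph{internal Ramsey property} if for all finite $A\le B\le F$, every $k\ge 2$ and every colouring $\chi\colon\binom{F}{A}\to[k]$ there is $g\in G$ with $\chi$ constant on $g\cdot\binom{B}{A}=\binom{gB}{A}$; since $gB$ is a copy of $B$ and $G$ is transitive on copies of $B$, this just says every finite colouring of $\binom{F}{A}$ has a monochromatic copy of $B$. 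I would prove the theorem in two halves: (i) $G$ is extremely amenable iff $F$ has the internal Ramsey property, and (ii) $F$ has the internal Ramsey property iff $\K^*$ is Ramsey.

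I expect step (ii) to be routine. For the converse (finite Ramsey $\Rightarrow$ internal), given $\chi$ and $B$, apply $C\to(B)^A_{k,1}$ for a witnessing $C\in\K^*$, realise $C$ as a copy $C'\le F$ (possible as $C\in\Age(F)$), and read off from $\chi\upharpoonright\binom{C'}{A}$ a monochromatic copy of $B$ inside $F$. For the forward direction (internal $\Rightarrow$ finite Ramsey) I would argue by compactness: if some $A\le B$ and $k$ admitted no finite witness, then exhausting $F$ by finite substructures $F_0\le F_1\le\cdots$ (each in $\K^*$), each $F_n$ would carry a colouring $\chi_n$ of $\binom{F_n}{A}$ with no monochromatic copy of $B$; a cluster point (equivalently an ultrafilter limit) of the $\chi_n$ in the compact space $[k]^{\binom{F}{A}}$ would be a colouring of $\binom{F}{A}$ with no monochromatic copy of $B$ at all, since any fixed copy $B'\le F$ lies in cofinally many $F_n$ and agrees there with a non-monochromatising $\chi_n$ --- contradicting the internal property.

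For step (i), the direction \emph{extreme amenability $\Rightarrow$ internal Ramsey} is the clean one. Given $\chi\colon\binom{F}{A}\to[k]$, let $G$ act on the compact space $[k]^{\binom{F}{A}}$ by the shift $(g\cdot\psi):=\psi\circ g^{-1}$ and put $X:=\overline{G\cdot\chi}$, a $G$-flow. Extreme amenability yields a $G$-fixed $\psi^*\in X$; being $G$-invariant while $G$ is transitive on $\binom{F}{A}$, the colouring $\psi^*$ is constant, say $\equiv i$. Membership $\psi^*\in\overline{G\cdot\chi}$ means that on the finite set $\binom{B}{A}$ some translate $g\cdot\chi$ agrees with $\psi^*$, so $\chi$ is constant $\equiv i$ on $g^{-1}\cdot\binom{B}{A}$, a copy of $B$, as required.

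The hard part, and the main obstacle, is \emph{internal Ramsey $\Rightarrow$ extreme amenability}: manufacturing a fixed point in an \emph{arbitrary} $G$-flow out of a finitary colouring statement. Here I would invoke Pestov's criterion that $G$ is extremely amenable iff its one-sided uniformity is finitely oscillation stable; for the non-archimedean $G=\Aut(F)$ the finite-valued uniformly continuous ``colourings'' of $G$ are exactly those factoring through a restriction $g\mapsto g\upharpoonright A$, i.e.\ through finite colourings of an orbit $\binom{F}{A}$, and stabilising such a colouring along translates of a finite $K\subseteq G$ unwinds precisely to monochromatising the finite family $\{xgA:x\in K\}\subseteq\binom{gB}{A}$ for any finite $B\supseteq\bigcup_{x\in K}xA$ --- which is exactly the internal Ramsey hypothesis. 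Equivalently one can build the fixed point directly: in a flow $X$ with base point $x_0$, a finite open cover refines (using continuity of the orbit map, made finitely determined through the appropriate completion) to a colouring of $G$ by cover-indices, and the internal Ramsey property stabilises this colouring on larger and larger finite pieces, yielding points $gx_0$ that are fixed modulo ever finer covers whose cluster point in the compact $X$ is a genuine $G$-fixed point. The real delicacy is this finitary approximation of the orbit map and the attendant bookkeeping of the left/right uniformity (passing from $\chi$ to $g^{-1}$ where needed), which is why everything is most safely phrased in terms of the greatest ambit rather than the flow $X$ itself.
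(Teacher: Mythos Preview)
Your proposal is correct and essentially matches the paper's approach. Note that the paper does not prove Theorem~\ref{thm:extremeamenability} directly: it is quoted from \cite{kpt}. What the paper \emph{does} prove is the generalization Theorem~\ref{thm:char} (finite Ramsey degree for embeddings $\le d$ iff $|M(G_\K)|\le d$), which for $d=1$ and ordered (hence rigid) $\K^*$ specializes to the present statement. Your three pieces line up with that proof as follows: your step~(ii) is Lemma~\ref{lem:infinite2} (the same compactness argument in $[k]^{F^A}$); your step~(i), extreme amenability $\Rightarrow$ internal Ramsey, is Proposition~\ref{prop:amtoram} with $d=1$ (the same shift action on $[k]^{F^A}$, fixed point is constant by transitivity, then approximate on $b\circ B^A$); and your step~(i), internal Ramsey $\Rightarrow$ extreme amenability, is Proposition~\ref{prop:ramtoam} with $d=1$.

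The only methodological difference is in this last, hard direction: you invoke Pestov's finite oscillation stability criterion as a black box, whereas the paper unpacks the argument by hand. Its Claim~1 is your observation that left uniformly continuous finite-valued maps on $G$ factor through some $g\mapsto g\upharpoonright A$ and are then governed by the internal Ramsey property; Claim~2 upgrades this to bounded $\mathbb{R}^n$-valued left uniformly continuous $f$ via an $\varepsilon$-net; and Claim~3 packages the resulting approximate-fixed-point statements as a filter of closed sets $Y(H,f,\varepsilon)$ in the flow, whose nonempty intersection (by compactness) furnishes the fixed point. This is exactly the ``direct'' route you sketch in your final paragraph, carried out in full; your appeal to Pestov's criterion is a legitimate shortcut that encapsulates the same mechanism.
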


Let $\K:=\{A\mid (A,<^A)\in \K^*\}$ be the {\em $L$-reduct} of $\K^*$; $\K^*$ is {\em reasonable} if 
for all $A,B\in\K$, all $a\in B^A$ and all $\K^*$-admissible orders $<^A$ on $A$ 
there is a $\K^*$-admissible order $<^B$ on $B$ such that $a(<^A)\subseteq <^B$, i.e. $a\in (B,<^B)^{(A,<^A)}$. 

\begin{lemma}\label{lem:reason}
Let $\K$ be a Fra\"iss\'e class and let $F=\Flim(\K)$. Then $\K^*=\Age(F, R)$ is reasonable for every order $R$ on $F$.
\end{lemma}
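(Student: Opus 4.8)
The plan is to verify the reasonableness condition directly, the single nontrivial ingredient being an \emph{extension property} of the Fra\"iss\'e limit that I will extract from its homogeneity.

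First I would record two bookkeeping facts about $\K^*=\Age(F,R)$. Each member of $\K^*$ has the form $(A,R\!\upharpoonright\! A)$ for a finite $A\le F$, and $R\!\upharpoonright\! A$ is a linear order; thus $\K^*$ is ordered and its $L$-reduct is $\Age(F)=\K$. Unwinding the definitions, a linear order $<^A$ on a given $A\in\K$ is $\K^*$-admissible exactly when $(A,<^A)$ embeds into $(F,R)$, i.e.\ when there is an $L$-embedding $e\in F^A$ that is order-preserving in the sense $e(<^A)\subseteq R$; since $<^A$ and $R$ are linear and $e$ is injective, such an $e$ automatically reflects the order and so is a genuine $L\cup\{<\}$-embedding.

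The key step is the extension property: for all $A,B\in\K$, every $a\in B^A$ and every $e_A\in F^A$ there is $e_B\in F^B$ with $e_B\circ a=e_A$. To prove it, pick any embedding $h\in F^B$, which exists because $B\in\K=\Age(F)$. Then $h\circ a$ and $e_A$ both lie in $F^A$, so $A$-homogeneity of $F$ (available since $A\in\Age(F)$) yields $\sigma\in\Aut(F)$ with $\sigma\circ(h\circ a)=e_A$; setting $e_B:=\sigma\circ h$ gives an embedding of $B$ into $F$ with $e_B\circ a=e_A$.

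With this in hand the lemma follows. Fix $A,B\in\K$, $a\in B^A$, and a $\K^*$-admissible order $<^A$ on $A$, witnessed by an order-preserving $e_A\in F^A$. Apply the extension property to get $e_B\in F^B$ with $e_B\circ a=e_A$, and define
$$<^B:=\{(x,y)\in B^2\mid (e_B(x),e_B(y))\in R\}.$$
Because $R$ is a linear order on $F$ and $e_B$ is injective, $<^B$ is a linear order on $B$, and by construction $e_B$ is an order-preserving $L$-embedding of $(B,<^B)$ into $(F,R)$; hence $(B,<^B)\in\Age(F,R)=\K^*$, so $<^B$ is $\K^*$-admissible. Finally, if $(x,y)\in{<^A}$ then $(e_A(x),e_A(y))\in R$, that is $(e_B(a(x)),e_B(a(y)))\in R$, whence $(a(x),a(y))\in{<^B}$; this gives $a(<^A)\subseteq{<^B}$, exactly as reasonableness demands. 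I do not expect a genuine obstacle here: once the extension property is isolated, pulling $R$ back along $e_B$ makes both the admissibility of $<^B$ and the inclusion $a(<^A)\subseteq{<^B}$ automatic, and the only care needed is the elementary observation that pullbacks of linear orders along embeddings are again linear orders.
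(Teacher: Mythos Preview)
Your proof is correct and follows essentially the same approach as the paper: both take a witness $e_A$ (the paper's $a_0$) for the admissibility of $<^A$, pick an arbitrary embedding of $B$ into $F$ (your $h$, the paper's $b$), use homogeneity to adjust it to an embedding $e_B$ (the paper's $\alpha\circ b$) extending $e_A$ along $a$, and then define $<^B$ as the pullback of $R$ along $e_B$. Your isolation of the extension property as a separate lemma is a clean presentational choice, but the underlying argument is identical.
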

\begin{proof}
Let $A,B\in\K$, $a\in B^A$ and $<^A$ be a $\K^*$-admissible order on $A$. Let $a_0\in (F, R)^{(A, <^A)}$ and $b\in F^B$. In particular, $a_0\in F^A$ and $b\circ a \in F^A$, and then by homogeneity of $F$ there exists an $\alpha\in\Aut(F)$ such that $\alpha\circ b\circ a = a_0$. We define 
$$<^B := (b^{-1}\circ \alpha^{-1})(R\upharpoonright (\alpha\circ b)(B))$$

We need to show that $a^{-1}(<^B\upharpoonright a(A)) = <^A$. We have that 
\begin{multline*}
a^{-1}(<^B\upharpoonright a(A)) = a^{-1}((b^{-1}\circ \alpha^{-1})(R\upharpoonright (\alpha\circ b)(B))\upharpoonright a(A)) =\\ a^{-1}((b^{-1}\circ \alpha^{-1})(R\upharpoonright (\alpha\circ b)(a(A)))) = a_0^{-1}(R\upharpoonright a_0(A))= <^A
\end{multline*}

The last equality holds as $a_0\in (F, R)^{(A, <^A)}$.
\end{proof}

The following is \cite[P5.2,~T10.8]{kpt}. Recall that $\textit{LO}$ denotes the space of orders (Example~\ref{exa:lo}).

\begin{theorem}\label{thm:minflow} Let $\K^*$ be a reasonable ordered Fra\"iss\'e class in the language $L\cup\{<\}$ 
and $\K$ its $L$-reduct. 
\begin{enumerate}
\item Then $\K$ is  Fra\"iss\'e and $\Flim(\K^*)=(\Flim(\K),<^*)$ for some linear order $<^*$. 
\item Let $X_{\K^*}:=\overline{G_\K\cdot <^*}$ be the orbit closure of $<^*$ in the logic action of $G_\K$ on  
$\textit{LO}$. Then $X_{\K^*}$ is the universal minimal $G_\K$-flow if and only if $\K^*$ is Ramsey and has the ordering property.
\end{enumerate}
\end{theorem}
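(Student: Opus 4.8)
The plan is to treat the two parts separately: part (1) follows from Fra\"iss\'e theory together with reasonableness, and part (2) from a combinatorial translation of minimality combined with extreme amenability of the stabilizer of $<^*$.

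For part (1), I would first check that the reduct $\K$ is a Fra\"iss\'e class. Heredity, joint embedding, and the presence of arbitrarily large members are inherited from $\K^*$ simply by forgetting the order. For amalgamation of $A,B_0,B_1\in\K$ over $a_0\in B_0^A,a_1\in B_1^A$, I would fix any $\K^*$-admissible order $<^A$ on $A$ and use reasonableness to produce $\K^*$-admissible orders $<^{B_0},<^{B_1}$ with $a_0\in(B_0,<^{B_0})^{(A,<^A)}$ and $a_1\in(B_1,<^{B_1})^{(A,<^A)}$; amalgamating these ordered structures in $\K^*$ and forgetting the order yields the desired amalgam. By Theorem~\ref{thm:fraisse}, $\Flim(\K)$ exists. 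Writing $\Flim(\K^*)=(F,<^*)$, I would then show that its $L$-reduct $F$ is $\Flim(\K)$: clearly $\Age(F)=\K$ and $F$ is countable and locally finite, so it suffices to verify the one-point extension property. Given an embedding $f\colon A\to F$ with $A\le B$ in $\K$, I would transport the order induced by $<^*$ on $f(A)$ back to an admissible order $<^A$, extend it by reasonableness to an admissible order $<^B$ on $B$, and apply the extension property of $\Flim(\K^*)=(F,<^*)$ to the ordered embedding $f\colon(A,<^A)\to(F,<^*)$; forgetting the order gives the required extension of $f$. Uniqueness of Fra\"iss\'e limits then gives $F\cong\Flim(\K)$, hence $\Flim(\K^*)=(\Flim(\K),<^*)$, and in particular the reduct is homogeneous.

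For part (2), note first that $g\in G_\K$ fixes $<^*$ in the logic action iff $g\in\Aut(\Flim(\K),<^*)=G_{\K^*}$, so the orbit $G_\K\cdot{<^*}$ is a copy of the coset space $G_\K/G_{\K^*}$ and $X_{\K^*}$ is its closure in the compact space $\textit{LO}$ (Example~\ref{exa:lo}). Using the basic open sets of $\textit{LO}$ and homogeneity of $\Flim(\K)$, I would first establish the combinatorial description $X_{\K^*}=\{\prec\in\textit{LO}\mid \text{every finite restriction of }\prec\text{ is }\K^*\text{-admissible}\}$, and then prove that $X_{\K^*}$ is minimal iff $\K^*$ has the ordering property. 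For the forward direction, given $\prec\in X_{\K^*}$ and finite $A\le\Flim(\K)$, the ordering property supplies $B$ with $A\le B$ such that the admissible order $\prec\!\upharpoonright\!B$ already contains a copy of $(A,{<^*}\!\upharpoonright\!A)$; homogeneity then yields $g\in G_\K$ with $(g\prec)\!\upharpoonright\!A={<^*}\!\upharpoonright\!A$, so $<^*\in\overline{G_\K\cdot\prec}$. For the converse I would, assuming the ordering property fails at some $A$ with a fixed admissible $<^A$, build by a compactness/amalgamation argument a global admissible order $\prec$ on $\Flim(\K)$ realizing no copy of $(A,<^A)$, whence $<^*\notin\overline{G_\K\cdot\prec}$ and $X_{\K^*}$ is not minimal.

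It remains to link minimality with universality through the stabilizer, and this is where I expect the main difficulty. Assuming $\K^*$ is Ramsey and has the ordering property, Theorem~\ref{thm:extremeamenability} gives that $G_{\K^*}$ is extremely amenable while the previous paragraph gives that $X_{\K^*}$ is minimal; I would then show $X_{\K^*}=M(G_\K)$ by proving universality. Given any minimal $G_\K$-flow $Y$, extreme amenability furnishes a $G_{\K^*}$-fixed point $y_0\in Y$, so the orbit map $g\cdot{<^*}\mapsto g\cdot y_0$ is a well-defined $G_\K$-equivariant map on $G_\K\cdot{<^*}$; the crux is to extend it continuously to all of $X_{\K^*}$. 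I would do this through the greatest ambit $S(G_\K)$: the pairs $(Y,y_0)$ and $(X_{\K^*},<^*)$ are both ambits, yielding canonical $G_\K$-maps $q\colon S(G_\K)\to Y$ and $p\colon S(G_\K)\to X_{\K^*}$, and the task reduces to showing that $q$ factors through $p$, equivalently that the projection $\pi_1\colon Z\to X_{\K^*}$ is injective for the minimal subflow $Z\subseteq X_{\K^*}\times Y$ through $(<^*,y_0)$. The fibre of $\pi_1$ over $<^*$ is a closed $G_{\K^*}$-invariant subset of $Y$, and collapsing it to the single point $y_0$ — thereby forcing $\pi_1$ to be an isomorphism — is exactly the step that consumes extreme amenability of $G_{\K^*}$ together with minimality; I expect this fibre-collapse to be the main obstacle. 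Once $\phi:=\pi_2\circ\pi_1^{-1}\colon X_{\K^*}\to Y$ is obtained, its image is a nonempty subflow of the minimal $Y$, hence all of $Y$, so $X_{\K^*}$ is universal and therefore equals $M(G_\K)$. Conversely, if $X_{\K^*}=M(G_\K)$ then it is minimal, giving the ordering property, and the same general lemma in its converse form forces $G_{\K^*}$ to be extremely amenable, whence $\K^*$ is Ramsey by Theorem~\ref{thm:extremeamenability}.
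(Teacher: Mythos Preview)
The paper does not prove this theorem; it is quoted from \cite[P5.2,~T10.8]{kpt} and used throughout as a black box, so there is no in-paper argument to compare your proposal against.

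Your outline is essentially the strategy of \cite{kpt}. Part~(1) is the standard Fra\"iss\'e argument, with reasonableness supplying exactly the admissible orders needed for amalgamation and for the one-point extension property of the reduct. For part~(2), your combinatorial description of $X_{\K^*}$ as the space of locally $\K^*$-admissible orders and the equivalence of minimality with the ordering property match \cite{kpt}; for universality under the Ramsey hypothesis, \cite{kpt} extend the orbit map $g\cdot{<^*}\mapsto g\cdot y_0$ by showing it is uniformly continuous on $G_\K$ and hence passes to the completion $\widehat{G_\K/G_{\K^*}}\cong X_{\K^*}$, whereas your greatest-ambit route is a legitimate variant in which the fibre-collapse you flag is handled via the right-topological semigroup structure of $S(G_\K)$ rather than by extreme amenability alone. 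The one place your sketch is genuinely thin is the converse: you invoke an unnamed ``general lemma in its converse form'' to deduce extreme amenability of $G_{\K^*}$ from $X_{\K^*}=M(G_\K)$, and while such a statement does appear in \cite{kpt}, it is not a formality---one must actually produce, from an arbitrary $G_{\K^*}$-flow, a $G_\K$-flow to which universality can be applied---so you should at least name the result or indicate the construction.
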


That $\K^*$  has the {\em ordering property} means that for all $A\in\K$ there is a $B\in\K$ such that
$(A,<^A)\le(B,<^B)$ for all $\K^*$-admissible orders $<^A$ on $A$ and $<^B$ on $B$.

In \cite {akl} Kechris et al. showed that a certain quantitative version of the ordering property characterizes unique ergodicity 
for so-called Hrushovski classes. Here, we shall only need the following \cite[P9.2]{akl}.
%

\begin{proposition} \label{prop:randomorder} Let $\K^*$ be a reasonable ordered Fra\"iss\'e 
class which is Ramsey and satisfies the ordering property, and let $\K$ be its $L$-reduct. 
Then $G_\K$ is amenable (uniquely ergodic)
if and only if there exists a
consistent random $\K^*$-admissible ordering $(R_A)_{A\in\K}$ (and for every other consistent 
random $\K^*$-admissible ordering $(R'_A)_{A\in\K}$ we 
have that $R_A$ and $R'_A$ have the same distribution for every $A\in\K$). 

Indeed, if  $(R_A)_{A\in\K}$ is a random $\K^*$-admissible ordering, then
there is a $G_\K$-invariant Borel probability measure $\mu$ on $X_{\K^*}$ 
such that for every $A\in\K$ and every 
$\K^*$-admissible ordering $<$ on $A$ we have\footnote{Given a random variable we always use $\Pr$ to 
denote the probability measure of its underlying probability space.} 
$\mu(U(<))=\Pr[R_A=<]$ where $$U(<):=\{R\in X_{\K^*}\mid R\upharpoonright A=<\}.
$$
\end{proposition}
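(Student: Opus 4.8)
The plan is to prove the Proposition by setting up a bijective correspondence between the $G_\K$-invariant Borel probability measures on the flow $X_{\K^*}$ and the consistent random $\K^*$-admissible orderings, under which a measure $\mu$ corresponds to the family $(R_A)_{A\in\K}$ determined by $\Pr[R_A=<]=\mu(U(<))$. Since $\K^*$ is Ramsey and has the ordering property, Theorem~\ref{thm:minflow}(2) identifies $X_{\K^*}$ with the universal minimal flow $M(G_\K)$; therefore, directly from the definitions (transporting measures along that flow isomorphism), $G_\K$ is amenable precisely when $X_{\K^*}$ carries some $G_\K$-invariant measure and uniquely ergodic precisely when that measure is unique. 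Thus all three assertions — amenability, unique ergodicity, and the explicit formula of the final clause — reduce to showing the correspondence is a well-defined bijection. Before starting I would record the topology of $X_{\K^*}$: as a closed subset of $\textit{LO}$ it is compact and zero-dimensional with the clopen sets $U(<)$ as a basis, and for each finite $A\le F$ the sets $U(<)$, as $<$ ranges over the $\K^*$-admissible orders on $A$, form a finite clopen partition of $X_{\K^*}$. The partition property holds because any $R\in X_{\K^*}=\overline{G_\K\cdot <^*}$ agrees on $A$ with some $g(<^*)$, and $<^*$ has all finite restrictions $\K^*$-admissible.

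For the direction from a measure to an ordering, given a $G_\K$-invariant $\mu$ I would define the law of $R_A$ by $\Pr[R_A=<]:=\mu(U(<))$; by the partition property this is a probability distribution supported on the $\K^*$-admissible orders of $A$. Two axioms must be verified. Consistency: for finite $A\subseteq B\le F$ and admissible $<^A$ on $A$ one has $U(<^A)=\bigsqcup U(<^B)$, the disjoint union over admissible $<^B$ on $B$ with $<^B\upharpoonright A=<^A$, so $\sigma$-additivity of $\mu$ gives $\Pr[R_A=<^A]=\Pr[R_B\upharpoonright A=<^A]$ (the general statement for an embedding $a\in B^A$ then follows from the next axiom). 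Isomorphism-invariance: any isomorphism between finite substructures of $F$ extends to some $g\in\Aut(F)=G_\K$ by homogeneity, and since $g\cdot U(<)=U(g(<))$ the $G_\K$-invariance of $\mu$ yields that $R_A$ and its image under the isomorphism are equidistributed. These are exactly the defining properties of a consistent random $\K^*$-admissible ordering.

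For the converse, given $(R_A)_{A\in\K}$ I would prescribe a set function on the clopen algebra of $X_{\K^*}$ by $\mu(U(<)):=\Pr[R_A=<]$. The isomorphism-invariance and consistency axioms make this prescription independent of the presentation of a clopen set and finitely additive on the Boolean algebra generated by the $U(<)$. Because $X_{\K^*}$ is compact and these sets are clopen, finite additivity upgrades automatically to $\sigma$-additivity — any countable clopen cover reduces to a finite one — so Carath\'eodory extension produces a Borel probability measure $\mu$, necessarily concentrated on $X_{\K^*}$. Its $G_\K$-invariance is checked on the generating sets via $g\cdot U(<)=U(g(<))$ together with the invariance of $(R_A)_{A\in\K}$, and then propagates to all Borel sets. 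This $\mu$ is exactly the measure of the ``indeed'' clause, its values on the $U(<)$ being $\Pr[R_A=<]$ by construction, and the two assignments are manifestly mutually inverse.

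The conceptual heart is this bijection; the main obstacle I expect is the extension step of the previous paragraph, namely verifying carefully that the clopen prescription is well-defined and finitely additive before invoking compactness and Carath\'eodory. The one structural input I rely on is that $X_{\K^*}$ coincides with the set of orders on $F$ whose every finite restriction is $\K^*$-admissible, with the orbit $G_\K\cdot <^*$ dense in it; this is the description of $X_{\K^*}$ underlying Theorem~\ref{thm:minflow} and is exactly where the hypothesis that $\K^*$ is reasonable is used. With the bijection established, amenability is equivalent to the existence of some $(R_A)_{A\in\K}$ and unique ergodicity to its uniqueness in distribution, which is the assertion of the Proposition.
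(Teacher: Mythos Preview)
The paper does not give a proof of this proposition; it is quoted verbatim from \cite[P9.2]{akl} and used as a black box. So there is no ``paper's own proof'' to compare against.

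Your argument is the standard one and is essentially correct: identify $X_{\K^*}$ with $M(G_\K)$ via Theorem~\ref{thm:minflow}(2), and then set up the bijection between $G_\K$-invariant Borel probability measures on $X_{\K^*}$ and consistent random $\K^*$-admissible orderings using the clopen basis $\{U(<)\}$. The direction from measure to ordering is immediate; for the converse, your observation that on a compact zero-dimensional space finite additivity on the clopen algebra automatically yields countable additivity (any countable clopen partition of a clopen set is finite) is exactly the right device, after which Carath\'eodory extends $\mu$ to the Borel $\sigma$-algebra. Two small remarks: first, the paper's single ``consistency'' axiom already entails what you call isomorphism-invariance (take $a$ to be an isomorphism), so there is no need to list it separately; second, the step you flag as the main obstacle---well-definedness and finite additivity of the prescription on the clopen algebra---does require a line of argument (write an arbitrary clopen set as a disjoint union of basic $U(<)$'s over a common large $A$ and use consistency to see the assigned value is independent of the choice of $A$), but this is routine. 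The structural input you cite, that $X_{\K^*}$ consists exactly of the orders with all finite restrictions $\K^*$-admissible, is indeed proved in \cite{kpt} under the reasonableness hypothesis.
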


A {\em random $\K^*$-admissible ordering} is a family $(R_A)_{A\in\K}$ of random variables 
such that each $R_A$ takes values in the set of $\K^*$-admissible 
orders on~$A$. It is {\em consistent} if 
for all $A,B\in\K$ and $a\in B^A$ the random variables $a^{-1}(R_B\upharpoonright \im(a))$ and $R_A$ have the same distribution.


\begin{examples}\label{exas} In \cite[$\S6$]{kpt} the reader can find constructions of reasonable 
ordered Fra\"iss\'e classes $\K^*$ whose reduct $\K$ is any of the classes mentioned in Example~\ref{exas:fraisseclasses}; 
in all these cases $\K^*$ is Ramsey and has the ordering property. By Theorem~\ref{thm:extremeamenability} one sees
that the automorphism groups of $(\mathbb Q, <)$ and of certain ordered versions of $B_\infty,R,V_{\infty,F}$ are 
extremely amenable~\cite{kpt}. Theorem~\ref{thm:minflow} allows  to calculate the universal minimal flows of the automorphism 
groups of $B_\infty,R$ and $V_{\infty,F}$. $\Aut(B_\infty)$ is not amenable, while $\Aut(R)$ and $\Aut(V_{\infty,F})$ are uniquely ergodic~\cite{akl}.
\end{examples}

\section{Automorphism groups with finite universal minimal flows}

Theorem~\ref{thm:extremeamenability} characterizes the condition that the universal minimal flow has size 1. In this section we provide a similar characterization for the condition that it has an arbitrary finite size.
To this end we consider Ramsey degrees {\em for embeddings}. The main result in this section reads:

\begin{theorem}\label{thm:char}
Let $d\in\N$ and $\K$ be a Fra\"iss\'e class.
The following are equivalent.
\begin{enumerate}
\item $M(G_\K)$ has size at most $d$;
\item  the Ramsey degree for embeddings of $\K$ is at most $d$.
\end{enumerate}
\end{theorem}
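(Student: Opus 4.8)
The plan is to translate the Ramsey degree for embeddings into the language of colouring flows and then read off the size of $M(G_\K)$. Throughout write $F=\Flim(\K)$ and $G=G_\K=\Aut(F)$, and realise each $A\in\K$ as a finite substructure $A\le F$; by homogeneity $G$ acts transitively on the set $F^A$ of embeddings, the stabiliser of the inclusion $a_*\colon A\hookrightarrow F$ being $G_{(A)}$. For $k\in\N$ let $G$ act on the compact space $[k]^{F^A}$ by $(g\cdot\chi)(a)=\chi(g^{-1}\circ a)$. First I would establish, by a routine compactness argument transferring the embedding analogue of $C\to(B)^A_{k,d}$ to $F$, the reformulation: the Ramsey degree for embeddings of $A$ is $\le d$ iff for every $k$ and every $\chi\in[k]^{F^A}$ the orbit closure $\overline{G\cdot\chi}$ contains a colouring using at most $d$ colours. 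The observation that makes this usable is that on a \emph{minimal} subflow $N\subseteq[k]^{F^A}$ all points use the same colours: evaluation at $a_*$ is a continuous map $e\colon N\to[k]$, and transitivity together with minimality give $\im\psi=e(N)$ for every $\psi\in N$. Writing $c(N):=|e(N)|$ for this common number of colours one has $c(N)\le|N|$, and a short supremum/minimum computation then shows that the Ramsey degree for embeddings of $\K$ equals $\sup\{c(N)\}$, the supremum taken over all minimal subflows $N$ of all the flows $[k]^{F^A}$.

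With this dictionary $(1)\Rightarrow(2)$ is immediate: any minimal subflow $N\subseteq[k]^{F^A}$ is a minimal $G$-flow, hence a continuous $G$-image of $M(G_\K)$, so $c(N)\le|N|\le|M(G_\K)|\le d$; taking the supremum bounds the degree by $d$.

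For $(2)\Rightarrow(1)$ I would prove $|M(G_\K)|\le d$ by presenting $M(G_\K)$ as an inverse limit of \emph{finite} $G$-flows indexed by the finite substructures $A\le F$. To each $A$ one associates the finite set $Q_A$ of ``$A$-local data'' of the points of $M(G_\K)$: concretely the restrictions to $A$ of the members of a canonical expansion flow representing $M(G_\K)$, the restriction-to-$A$ map being $G_{(A)}$-invariant and its image being the set of admissible expansion-structures on the fixed universe $A$, which by the reformulation has size $t(A)\le d$. For $A\le A'$ restriction gives bonding maps $Q_{A'}\to Q_A$, and these are \emph{surjective} by reasonableness (cf.\ Lemma~\ref{lem:reason}): every admissible structure on $A$ extends to one on $A'$. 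Since the $A$ are directed with union $F$ and $\bigcap_A G_{(A)}=\{\id_F\}$, these data separate the points of $M(G_\K)$, so $M(G_\K)=\varprojlim_A Q_A$. An inverse limit of sets of size $\le d$ along surjective bonding maps has size $\le d$ (the $|Q_A|$ are non-decreasing, bounded by $d$, hence eventually constant with bijective bonding maps), giving $|M(G_\K)|\le d$.

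The hard part is the construction of the finite quotients $Q_A$, i.e.\ realising the \emph{number of points} of $M(G_\K)$ as a \emph{number of unavoidable colours}. This is delicate because a minimal flow can have many points yet few colours in any single coordinate: the space of linear orders, where every point is an order but pairs receive only two colours, is exactly this phenomenon, and is the case where the embedding Ramsey degree is unbounded. What saves the finite case is that $t(A)\le d$ holds \emph{uniformly in $A$}; since the action on $F^A$ is transitive, its only $G$-invariant partitions are trivial, so the required finite colourings cannot be globally equivariant and must instead arise from a $G_{(A)}$-invariant ``restriction'' structure. Producing such a structure on $M(G_\K)$ --- equivalently, representing it as the orbit closure of a canonical precompact expansion whose finite restrictions take at most $d$ values --- is where the non-archimedean structure of the greatest ambit $S(G)=\varprojlim_A\beta(F^A)$ combines with reasonableness; once $Q_A$ and its surjective bonding maps are in hand, the cardinality bound is purely formal.
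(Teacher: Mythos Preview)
Your reformulation of the embedding Ramsey degree via colouring flows is correct, and your argument for $(1)\Rightarrow(2)$ is essentially the paper's: a minimal subflow of $[k]^{F^A}$ is a continuous $G$-image of $M(G_\K)$, hence of size $\le d$, so $c(N)\le|N|\le d$. (The paper phrases this slightly differently in Proposition~\ref{prop:amtoram}, finding an orbit $\{\psi_0,\dots,\psi_{d-1}\}$ inside the orbit closure of an arbitrary $\chi_0$ and reading off colours as values $\psi_\nu(a_0)$, but the content is the same.)

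The direction $(2)\Rightarrow(1)$, however, has a genuine gap. You correctly diagnose the obstacle: your dictionary gives $\sup_N c(N)\le d$, which bounds the number of \emph{colours} seen in any single coordinate, not the number of \emph{points}. To run the inverse-limit argument you need finite $G$-quotients $Q_A$ of $M(G_\K)$ with $|Q_A|\le d$, but you never construct them. The phrase ``canonical expansion flow representing $M(G_\K)$'' presupposes exactly what is at stake: in this generality there is no companion $\K^*$ available (the theorem does not assume $\K$ is Ramsey, and Lemma~\ref{lem:reason} applies only once an order expansion is already in hand), and even when $M(G_\K)$ \emph{can} be written as an expansion flow, the identification of $|Q_A|$ with the Ramsey degree of $A$ is a theorem requiring the ordering/expansion property, not a definition. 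Your last paragraph is an honest acknowledgement that this step is missing rather than a proof of it.

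The paper takes a completely different route for $(2)\Rightarrow(1)$ (Proposition~\ref{prop:ramtoam}): instead of analysing $M(G_\K)$ structurally, it shows directly that \emph{every} $G$-flow $X$ has an orbit of size $\le d$ and invokes Lemma~\ref{lem:orb}. The mechanism is an approximation argument. First (Claim~1), the Ramsey hypothesis is used verbatim to show that any $f\colon G\to[k]$ which is constant on left $G_{(A)}$-cosets can be made to take $\le d$ values on a translate $gH$ of any finite $H\subseteq G$. Then (Claim~2) any bounded left uniformly continuous $f\colon G\to\mathbb R^n$ is approximated by such a coset-constant function, yielding $d$ balls covering $f(gH)$. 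Finally (Claim~3), for a given $G$-flow $X$ one applies this to the functions $g\mapsto f(g^{-1}x)$ and uses compactness of $X$ to intersect the resulting closed conditions over all finite $H$, all $f$, and all $\varepsilon>0$; any point $x_0$ in the intersection has $|Gx_0|\le d$. This bypasses entirely the need to represent $M(G_\K)$ as an expansion space, and is where the actual work lies.
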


We start with some preliminary observations concerning finite universal minimal flows in Section~\ref{sec:finiteflow}. 
In Section~\ref{sec:remb} we define Ramsey degrees for embeddings and discuss their relationship to Ramsey 
degrees. The results proved in Sections~\ref{sec:finiteflow} and \ref{sec:remb} are mainly folklore.
In Section~\ref{sec:proof} we prove the result above and in 
Section~\ref{sec:cor}  we note some corollaries.


\subsection{Finite universal minimal flows}\label{sec:finiteflow}

\begin{lemma}\label{lem:orb}
 Let $G$ be a topological Hausdorff group and $d\in\mathbb N$. 
Then $M(G)$ has size at most $d$ if and only if every nonempty $G$-flow has an orbit of size at most $d$.
\end{lemma}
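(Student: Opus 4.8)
The plan is to prove both implications directly, relying only on the universality of $M(G)$, the existence of minimal subflows (Zorn's lemma), and the elementary topological fact that in a compact Hausdorff space a finite subset is closed. Throughout I shall use that a $G$-flow is \emph{minimal} precisely when each of its orbits is dense.

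For the implication that $|M(G)|\le d$ forces every nonempty $G$-flow to have a small orbit, let $X$ be any nonempty $G$-flow. By Zorn's lemma $X$ contains a minimal subflow $Y$. Universality of $M(G)$ gives a homomorphism $\pi\colon M(G)\to Y$; since $M(G)$ is compact and $\pi$ is continuous, the image $\pi(M(G))$ is a compact, hence closed, $G$-invariant subset of $Y$, i.e.\ a nonempty subflow. Minimality of $Y$ then yields $\pi(M(G))=Y$, so $\pi$ is surjective and $|Y|\le|M(G)|\le d$. Thus $Y$ is a finite minimal flow: picking any $y\in Y$, its orbit $Gy$ is dense in $Y$ by minimality, but $Gy$ is finite and therefore closed in the Hausdorff space $Y$, so $Gy=\overline{Gy}=Y$. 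Hence $Y=Gy$ is a single orbit of size at most $d$ contained in $X$, as required.

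For the converse, I would simply apply the hypothesis to the flow $X=M(G)$ itself, which is nonempty. This produces an orbit $O\subseteq M(G)$ with $|O|\le d$. Because $M(G)$ is minimal, $O$ is dense; because $O$ is finite it is closed in the Hausdorff space $M(G)$; a closed dense set is the whole space, so $O=M(G)$ and $|M(G)|=|O|\le d$.

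I do not expect a serious obstacle here, as the argument is essentially a packaging of standard facts. The two points requiring a little care are the surjectivity of $\pi$ onto the minimal flow $Y$ (which is where universality and compactness are genuinely used) and the reduction of a finite minimal flow to a single orbit; both hinge on the same observation that finite subsets of a Hausdorff space are closed, which I would state once and invoke in each direction.
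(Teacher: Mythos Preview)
Your proof is correct and follows essentially the same route as the paper's: pass to a minimal subflow, use universality of $M(G)$ to get a surjection and hence a small subflow, and for the converse apply the hypothesis to $M(G)$ itself and use that a finite orbit in a minimal flow must be everything. You spell out a couple of points the paper leaves implicit---namely why the image of $\pi$ is all of $Y$ and why a finite minimal flow consists of a single orbit---but these are just the obvious justifications behind the paper's more compressed presentation.
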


\begin{proof}
Assume that $|M(G)|\le d$, and let $X$ be a nonempty $G$-flow. Then there is a minimal subflow $X'$ of $X$ and 
a homomorphism $\pi$ of $M(G)$ onto $X'$. Thus $|X'|\leq d$.

Conversely, if every nonempty $G$-flow has an orbit of size at most $d$, then so does $M(G)$. Since $M(G)$ is 
minimal, this orbit is dense in $M(G)$, so it is equal to $M(G)$ by finiteness.  
\end{proof}

\begin{lemma}\label{lem:todorpart}
 Let $G$ be a topological Hausdorff group and $H$ an extremely amenable closed subgroup of $G$ with finite index. Then $H$ is a 
normal clopen subgroup of $G$ and $M(G)$ is isomorphic to the action of $G$ on $G/H$ by left multiplication.
\end{lemma}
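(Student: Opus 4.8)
The plan is to prove three things: that $H$ is clopen, that $H$ is normal, and that $M(G)$ is the left-multiplication action on $G/H$.

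First I would observe that since $H$ has finite index in $G$, the cosets of $H$ partition $G$ into finitely many pieces, each of which is a translate $gH$ of $H$. A closed subgroup of finite index is automatically open: its complement is the finite union of the other cosets, each of which is closed (being a translate of the closed set $H$), so the complement is closed and hence $H$ is open. Thus $H$ is clopen, and every coset $gH$ is clopen as well.

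Next I would produce the flow. The plan is to equip $G/H$ with the discrete topology (it is a finite set) and let $G$ act by left multiplication $g\cdot(g'H)=gg'H$. This action is continuous because the stabilizer of each coset is an open subgroup (a conjugate of the clopen $H$), so the map $G\times G/H\to G/H$ is continuous; and $G/H$ is a finite discrete, hence compact Hausdorff, space. So $G/H$ is a $G$-flow, and it is clearly minimal since $G$ acts transitively on its own cosets by left multiplication.

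The heart of the argument, and the step I expect to be the main obstacle, is to show that this flow $G/H$ is in fact the \emph{universal} minimal flow. I would use extreme amenability of $H$ here. The idea: given any minimal $G$-flow $Y$, I need a $G$-homomorphism from $G/H$ to $Y$. Restricting the $G$-action to $H$ makes $Y$ an $H$-flow, so by extreme amenability of $H$ there is an $H$-fixed point $y_0\in Y$. I would then try to define a $G$-map $\pi\colon G/H\to Y$ by $\pi(gH):=g\cdot y_0$; this is well-defined precisely because $y_0$ is $H$-fixed (if $gH=g'H$ then $g^{-1}g'\in H$ fixes $y_0$, so $g\cdot y_0=g'\cdot y_0$), it is continuous since the domain is discrete, and it is visibly a $G$-map. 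By minimality of $Y$ its image, a nonempty $G$-invariant subset, is dense; but it is also finite hence closed, so $\pi$ is onto and $Y$ is finite. Since this works for every minimal $Y$, the flow $G/H$ maps onto each of them, so by the universal property (and uniqueness up to isomorphism of the universal minimal flow) we conclude $M(G)\cong G/H$. In particular $|M(G)|=[G:H]$ is finite.

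Finally I would deduce normality of $H$. Since $M(G)\cong G/H$ with the left-multiplication action, and $M(G)$ is unique up to $G$-flow isomorphism, I can also realize $M(G)$ as $G/H'$ for $H'$ the stabilizer of any point; but more directly, normality should follow by symmetry considerations: the left-multiplication flow on $G/H$ has point stabilizers exactly the conjugates $gHg^{-1}$, and each such conjugate is again an extremely amenable (being isomorphic to $H$) closed subgroup of finite index, giving a $G$-flow isomorphism $G/H\cong G/gHg^{-1}$ fixing the structure; comparing stabilizers of the image of the identity coset forces $gHg^{-1}=H$ for all $g$. Alternatively, and perhaps more cleanly, I would argue that the kernel $N:=\bigcap_{g\in G}gHg^{-1}$ is a normal open subgroup contained in $H$, and then check directly that the fixed-point argument already shows $\pi$ is injective when $Y=G/H$ itself, forcing $H$ to equal its own normalizer's relevant stabilizer; the expected obstacle is packaging this cleanly, and I would most likely settle it by noting that since $G$ acts on the finite set $G/H$, the kernel of this action is open and normal, and extreme amenability of $H$ together with universality pins down $H$ as a point stabilizer, which for the left-multiplication action of $G$ on its own coset space must be normal whenever the flow is to be genuinely the universal one with the correct symmetry.
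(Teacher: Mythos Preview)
Your arguments that $H$ is clopen and that $G/H$ is the universal minimal flow are correct and essentially identical to the paper's. The gap is in the normality step.

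Your first attempt compares $G/H$ with $G/gHg^{-1}$: since $gHg^{-1}$ is also extremely amenable, closed, and of finite index, both coset flows realize $M(G)$ and are therefore $G$-isomorphic. But a $G$-isomorphism $\varphi\colon G/H\to G/gHg^{-1}$ need not send the identity coset to the identity coset; if $\varphi(eH)=k\,gHg^{-1}$ then comparing stabilizers gives only $H=(kg)H(kg)^{-1}$, i.e.\ $kg\in N_G(H)$ for \emph{some} $k$ depending on $g$ and on $\varphi$. This does not force $g\in N_G(H)$. (Equivalently: knowing $|G/H|=|G/gHg^{-1}|$ is automatic for conjugate subgroups and gives no information.) Your second attempt, via the kernel $N=\bigcap_g gHg^{-1}$, never crystallizes into an argument; in particular you would need $N$ itself to be extremely amenable, which does not follow immediately from extreme amenability of $H$.

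The paper's fix is a one-line strengthening of your idea: instead of comparing $G/H$ with $G/gHg^{-1}$, compare it with $G/H'$ where $H':=H\cap gHg^{-1}$. This $H'$ is still closed of finite index, so $G/H'$ is a finite discrete transitive (hence minimal) $G$-flow --- no extreme amenability needed here. By the universality of $G/H$ that you already established, there is a surjection $G/H\twoheadrightarrow G/H'$, so $|G:H'|\le|G:H|$; since $H'\subseteq H$, this forces $H'=H$, i.e.\ $H\subseteq gHg^{-1}$. As $g$ was arbitrary, $H$ is normal.
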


\begin{proof} Clearly, a closed subgroup of finite index is open. We first show that $G/H$ is the universal minimal $G$-flow.
Since $H$ is open $G/H$ is discrete, and as $|G:H|$ is finite, $G/H$ is compact. Hence, $G/H$ is a $G$-flow. 
It is minimal, because $G$ acts 
transitively on $G/H$. If $Y$ is an arbitrary $G$-flow, 
then its restriction to $H$ is an $H$-flow, so it has a fixed point $y\in Y$. Then $gH\mapsto gy$ is
a homomorphism from $G/H$ into $Y$. 

As $gHg^{-1}$ is a closed subgroup of finite index for every $g\in G$, 
so is $H'=H\cap gHg^{-1}$. As above, we see that $G/H'$ is a minimal $G$-flow. By universality of $G/H$ 
there exists a surjection from $G/H$ onto $G/H'$, so $|G:H'|\le |G:H|$. Thus $H= gHg^{-1}$ for every $g\in G$, 
that is, $H$ is normal.
\end{proof}

\begin{proposition}\label{prop:todor}  Let $G$ be a topological Hausdorff group and $d\in\mathbb N$. Then 
$M(G)$ has size~$d$ if and only if 
$G$ has an extremely amenable, open, normal subgroup of index $d$.
\end{proposition}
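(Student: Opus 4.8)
The plan is to prove both directions by leveraging Lemma~\ref{lem:todorpart}, which already establishes the correspondence between finite universal minimal flows and extremely amenable closed subgroups of finite index. The statement claims an equivalence: $M(G)$ has size exactly $d$ if and only if $G$ has an extremely amenable, open, normal subgroup of index $d$. So I would prove each implication separately, and in both directions the subgroup in question will be (or will give rise to) the stabilizer of a point in $M(G)$.

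For the forward direction, suppose $M(G)$ has size $d$. Since $M(G)$ is a $G$-flow, pick any point $x\in M(G)$ and let $H:=G_x=\{g\in G\mid g\cdot x=x\}$ be its stabilizer. First I would observe that $H$ is a closed subgroup: it is the preimage of the closed set $\{x\}$ (a point in a Hausdorff space) under the continuous map $g\mapsto g\cdot x$, so it is closed. Because $M(G)$ is minimal, the orbit $G\cdot x$ is dense, and since $M(G)$ is finite and Hausdorff it is discrete, so the dense orbit is all of $M(G)$; hence $G$ acts transitively and the orbit-stabilizer correspondence gives $|G:H|=|M(G)|=d$, so $H$ has finite index. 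The crucial step is to show $H$ is extremely amenable. For this I would take an arbitrary $H$-flow $Y$; I must produce an $H$-fixed point. The natural move is to induce from $Y$ a $G$-flow and exploit universality of $M(G)$. One clean way is to consider the space of continuous $H$-equivariant maps, or more concretely to use that $M(G)$ itself, being finite with trivial topology on each point-stabilizer's coset, lets me transfer fixed points. Alternatively, and this is the cleaner route, I would appeal directly to Lemma~\ref{lem:todorpart} run in reverse: the lemma already tells us that whenever we have an extremely amenable closed subgroup of finite index, $M(G)$ is the coset space. So instead I would argue that the stabilizer $H$ is extremely amenable as follows. Given an $H$-flow $Y$, form the induced flow consisting of functions from $G/H\cong M(G)$ to $Y$ that are $H$-equivariant in the appropriate sense, giving a $G$-flow $Z$; by universality there is a $G$-map $M(G)\to Z$ (after passing to a minimal subflow of $Z$), and evaluating at the base point yields an $H$-fixed point of $Y$.

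For the reverse direction, suppose $H$ is an extremely amenable, open, normal subgroup of index $d$. This is exactly the hypothesis of Lemma~\ref{lem:todorpart} (openness implies closedness is not needed, but an open subgroup of finite index is closed, so the hypotheses are met). By that lemma, $M(G)$ is isomorphic to the left-multiplication action of $G$ on $G/H$, and since $|G:H|=d$ we get $|M(G)|=|G/H|=d$ directly. This direction is essentially immediate from the lemma.

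The main obstacle will be the extreme amenability of the stabilizer $H$ in the forward direction: producing, for an arbitrary $H$-flow $Y$, a canonical $H$-fixed point using only the universality and minimality of $M(G)$. The induction-of-flows construction must be carried out carefully to ensure $Z$ is a genuine compact Hausdorff $G$-flow (one should equip the function space with the product topology and check compactness via Tychonoff and closedness of the equivariant functions), and one must verify that evaluation at the chosen base point is continuous and lands in $Y$ as a genuine fixed point under $H$. Once that is in place, combining the forward stabilizer argument with the reverse application of Lemma~\ref{lem:todorpart} closes the equivalence. I would also remark that normality of $H$ comes for free: Lemma~\ref{lem:todorpart} already shows any extremely amenable closed subgroup of finite index is automatically normal and clopen, so in the forward direction the stabilizer $H$ we constructed is in fact normal, matching the statement.
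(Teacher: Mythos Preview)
Your proposal is correct. The backward direction is indeed immediate from Lemma~\ref{lem:todorpart}. For the forward direction you take a route that differs from the paper's in a meaningful way, and it is worth spelling out the comparison.

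You work with a single point stabilizer $H=G_x$ (closed of finite index, hence open, of index exactly $d$) and argue that $H$ is extremely amenable by co-inducing an $H$-flow $Y$ to a $G$-flow $Z$, passing to a minimal subflow, and then using that $x$ itself is already $H$-fixed in $M(G)$: the $G$-map $\psi\colon M(G)\to Z$ composed with the $H$-equivariant evaluation-at-identity $Z\to Y$ sends $x$ to an $H$-fixed point of $Y$. Normality then comes post hoc from Lemma~\ref{lem:todorpart}. One caveat: your description of $Z$ as ``functions from $G/H\cong M(G)$ to $Y$ that are $H$-equivariant'' is not quite the right object; the co-induced flow is the space of continuous maps $f\colon G\to Y$ with $f(gh)=h^{-1}\cdot f(g)$, under left translation by $G$. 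With $|G:H|<\infty$ this is just $Y^d$ with a twisted action, and evaluation at $e$ gives the required $H$-equivariant map to $Y$.

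The paper instead takes $N=\bigcap_{x\in M(G)}G_x$, the kernel of the action, which is normal a~priori but only known to have index $\geq d$. It builds the co-induced flow on $(G/N)\times Y$ via an explicit cocycle, proves this flow is \emph{minimal} (normality of $N$ is used here in an essential way), and then universality of $M(G)$ gives a surjection $M(G)\twoheadrightarrow (G/N)\times Y$, forcing $|Y|=1$ and $|G:N|=d$ simultaneously. So the paper obtains extreme amenability and the exact index in one stroke, whereas you separate them. Your argument avoids the minimality verification altogether (any minimal subflow of $Z$ will do), which is a genuine simplification; the paper's version is more explicit about the cocycle machinery and does not need to appeal back to Lemma~\ref{lem:todorpart} for normality.
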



\begin{proof}
%
The backward direction follows from  Lemma~\ref{lem:todorpart}. Conversely, assume that  $X:=M(G)$ has size $d$.
For $x\in X$ let $H_x\leq G$ be the stabilizer of $x$.
Then there is a bijection between the set of left cosets of $H_x$ and the orbit $G\cdot x$. Since $G\cdot x$ is finite, 
$G\cdot x=\overline{G\cdot x}$, so $G\cdot x=X$ by minimality.
Hence, $|G:H_x|=|X|=d$.  As $H_x$ is closed and of finite index, so is 
$N:=\bigcap_{x\in X}H_x$, and hence $N$ is clopen.
Since $N$ is the pointwise stabilizer of~$X$, it is normal. 

Let $Y$ be a minimal $N$-flow. 
Let $\tau: G/N\rightarrow G$ be a function with $\tau(hN)\in hN$.
Define $a: G \times G/N \rightarrow N$ by setting
$$
a(g, hN):= \tau(hN)^{-1}\cdot g^{-1}\cdot \tau(ghN).
$$
A straightforward calculation shows that $a$ satisfies the so-called cocycle identity, that is, for all $g_1,g_2,h\in G$ 
\begin{equation}\label{eq:cocycle}
a(g_1 g_2, hN) = a(g_2, hN)\cdot a(g_1, g_2 hN). 
\end{equation}

We can construct an action of $G$ on $(G/N \times Y)$ by
$$
(g,(hN, y)) \mapsto (ghN, a(g, hN)^{-1}\cdot y).
$$
That this indeed defines a group action follows directly from~\eqref{eq:cocycle}.
The action is continuous and $(G/N \times Y)$ is compact, so $(G/N \times Y)$ is a $G$-flow.

Let $h\in G,y\in Y$ be arbitrary.
We show that
\begin{equation}\label{eq:section}
Y(h,y):=\{a(n, hN)^{-1}\cdot y\mid n\in N\}\textup{ is dense in } Y.
\end{equation}
Indeed, as $N$ is normal, we have $Y(h,y)=\tau(hN)^{-1} \cdot N \cdot \tau(hN) \cdot y = N\cdot y$. Since $Y$ is a minimal $N$-flow, the orbit $N\cdot y$ is dense in $Y$. 

The orbit $G\cdot (hN,y)$ contains $N\cdot (ghN,y')$ for every $g\in G$ and $y':=a(g, hN)^{-1}\cdot y$. But
$N\cdot (ghN,y')=\{(nghN, a(n, hN)^{-1}\cdot y')\mid n\in N\}=\{ghN\}\times Y(h,y')$, where the last equality 
holds because $N$ is normal. So the orbit $G\cdot (hN,y)$ contains $\bigcup_{g\in G}(\{gN\}\times Y(h,y_g))$ for certain $y_g$'s, 
and this set is dense in $(G/N\times Y)$ by~\eqref{eq:section}. Thus $(G/N\times Y)$ is a minimal $G$-flow.

By the universality of $X$ there exists a surjection from $X$ onto  $(G/N \times Y)$. 
In particular, $|G/N\times Y|\le d$. By definition  of $N$ we have $|G:N|\ge d$, 
so $|Y|=1$, $|G/N|=d$.
This means $N$ is extremely amenable and has index $d$ in $G$.
\end{proof}

\begin{example}\label{ex:flowd}
For $d\in \mathbb{N}$ let $G^*$ be the automorphism group of $(\mathbb Q, <, 0,1, \ldots, d-1)$, the structure  
with universe $\mathbb Q$ that interprets for all $i\in[d]$ a constant by $i$ and a 
binary relation symbol $<$ by the rational order. Let $G$ be the group 
generated by $G^*$ and the permutation $\alpha=(0\ 1 \ldots\ d-1)$. 
This is a closed subgroup of the group of all permutations of $\mathbb Q$, so $G=G_\K$ for some Fra\"iss\'e class $\K$ 
(see e.g.~\cite{kb}). 
Since $\alpha$ commutes with $G$, $G^*$ is normal in~$G$. Moreover, $G^*$ has index $d$ in $G$, and it follows from
\cite[L13]{bpt} (see also \cite[P24]{bodpin}) that~$G^*$ is extremely amenable. 
By Lemma~\ref{lem:todorpart}, $|M(G)|=|G/G^*|=d$.
\end{example}

\begin{example} 
Let $G$ be the automorphism group of $(\mathbb Q,E_d,<)$ where $<$ is the rational order and $E_d$ is an equivalence relation with $d$ classes each of which 
is dense in $(\mathbb Q,<)$. Let $H$ be the subgroup of $G$ consisting of those automorphisms that preserve each of the classes.
It is shown in \cite[T8.4]{kpt} that $H$ is extremely amenable and of index $d!$ in $G$.
By Lemma~\ref{lem:todorpart}, $|M(G)|=|G/H|=d!$.
\end{example}

\subsection{Ramsey degrees for embeddings}\label{sec:remb}
Let $k,d\in\N$ and $\K$ be a class of finite structures. Then $C\hookrightarrow (B)^A_{k,d}$ means that for every colouring $\chi:C^A\to [k]$ there exists a $b\in C^B$ such that $|\chi(b\circ B^A)|\le d$.  Naturally here, $b\circ B^A$ denotes $\{b\circ a\mid a\in B^A\}$.
The {\em Ramsey degree for embeddings} of $A$ in $\K$ is the least $d\in\N$ such 
that for all $B\in \K$ and $k\ge 2$ there is a $C\in\K$ such that $C\hookrightarrow (B)^A_{k,d}$ --
provided that such a $d$ exists; otherwise it is $\infty$. 
Taking the supremum over $A\in\K$ gives the Ramsey degree for embeddings of $\K$. If this degree is~1 we call $\K$ {\em Ramsey for embeddings}.


\begin{lemma}\label{lem:infinite2} Let $d\in\N$, $\K$ be a Fra\"iss\'e class, $F=\Flim(\K)$ and $A\in\K$. 
The Ramsey degree for embeddings of $A$ in $\K$ is at most $d$ if and only if $F\hookrightarrow (B)^A_{k,d}$ 
for all $B\in\K$ and~$k\ge 2$.
\end{lemma}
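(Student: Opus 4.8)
The plan is to prove the two implications separately, since they are of genuinely different character: the forward direction ($\Rightarrow$) transports a finite witness into $F$ by an embedding and pulls the colouring back, whereas the backward direction ($\Leftarrow$) is a compactness argument that manufactures a finite witness from the infinitary statement. Throughout I will use that $\K=\Age(F)$, so a structure lies in $\K$ exactly when it embeds into $F$, together with the convention that $B^A$ is the (possibly empty) set of embeddings of $A$ into $B$ and that these compose in the obvious way.

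For ($\Rightarrow$) I would assume the Ramsey degree for embeddings of $A$ in $\K$ is at most $d$, fix $B\in\K$, $k\ge2$ and a colouring $\chi\colon F^A\to[k]$, and produce $b\in F^B$ with $|\chi(b\circ B^A)|\le d$. First choose a witness $C\in\K$ with $C\hookrightarrow(B)^A_{k,d}$; since $C\in\Age(F)$ there is an embedding $c\in F^C$. I would pull $\chi$ back to $\chi'\colon C^A\to[k]$, $\chi'(a):=\chi(c\circ a)$, apply $C\hookrightarrow(B)^A_{k,d}$ to get $b'\in C^B$ with $|\chi'(b'\circ B^A)|\le d$, and set $b:=c\circ b'$. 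The computation $\chi(b\circ a_0)=\chi(c\circ b'\circ a_0)=\chi'(b'\circ a_0)$ for $a_0\in B^A$ shows $\chi(b\circ B^A)=\chi'(b'\circ B^A)$, giving the bound. This direction is essentially bookkeeping about composing embeddings and needs nothing beyond $F^C\neq\emptyset$.

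For ($\Leftarrow$) I would argue by contraposition: if the Ramsey degree for embeddings exceeds $d$, then there are $B\in\K$ and $k\ge2$ such that no $C\in\K$ satisfies $C\hookrightarrow(B)^A_{k,d}$, i.e.\ every $C\in\K$ carries a \emph{bad} colouring $\chi_C\colon C^A\to[k]$ with $|\chi_C(b\circ B^A)|\ge d+1$ for all $b\in C^B$. The goal is to glue these into one bad colouring of $F^A$, which would contradict $F\hookrightarrow(B)^A_{k,d}$. I would place the product topology on $[k]^{F^A}$, compact by Tychonoff as $F^A$ is countable and $[k]$ finite, and for each $b\in F^B$ consider $K_b:=\{\chi\mid |\chi(b\circ B^A)|\ge d+1\}$, which is clopen since it depends only on the coordinates in the finite set $b\circ B^A$. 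Given finitely many $b_1,\dots,b_m\in F^B$, their images lie in a common finite substructure $C\subseteq F$ with $C\in\K$, whence $b_i\in C^B$ and $b_i\circ B^A\subseteq C^A$; then any extension of $\chi_C$ to $F^A$ lies in $\bigcap_{i=1}^m K_{b_i}$, so this intersection is nonempty. The finite intersection property and compactness then yield $\chi\in\bigcap_{b\in F^B}K_b$, a bad colouring of $F^A$ as desired.

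I expect the substantive step to be the backward direction, and within it the verification of the finite intersection property. The point requiring care is that for finitely many chosen $b_i\in F^B$ there is a single finite $C\in\K$ through which they all factor and which contains every $b_i\circ B^A$; this is exactly where local finiteness of $F$ (so that such a $C$ is finite) and the identity $\K=\Age(F)$ (so that $C\in\K$ and hence a bad colouring $\chi_C$ is available) are used. No counting or Ramsey-theoretic input is needed here: all the combinatorics is hidden in the assumed existence of the colourings $\chi_C$.
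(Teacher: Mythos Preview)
Your proof is correct and follows essentially the same approach as the paper. The forward direction is identical (pull back along an embedding $c\in F^C$), and the backward direction is the same compactness argument on $[k]^{F^A}$; the only cosmetic difference is that you index the closed sets by embeddings $b\in F^B$ (taking $K_b$) whereas the paper indexes them by finite substructures $C\subseteq F$ (taking the set of colourings whose restriction to $C^A$ is bad), but the finite intersection property is verified in both cases by passing to a common finite substructure and invoking local finiteness.
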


\begin{proof} Assume that the Ramsey degree for embeddings of $A$ in $\K$ is at most $d$. Let $B\in\K, k\ge 2$ and 
$\chi:F^A\to[k]$. We are looking for $b'\in F^B$ such that $|\chi(b'\circ B^A)|\le d$.
Choose $C\in\K$ such that $C\hookrightarrow (B)_{k,d}^A$. Choose a $c\in F^C$ and let 
$\chi':C^A\to[k]$ map $a\in C^A$ to $\chi(c\circ a)$. By $C\hookrightarrow (B)_{k,d}^A$ there is a $b\in C^B$ 
such that $|\chi'(b\circ B^A)|\le d$, i.e. $|\chi(c\circ b\circ B^A)|\le d$. Then $b':=c\circ b\in F^B$ is as desired.

Assume  that there is an $A\in\K$ whose Ramsey degree for embeddings is bigger than $d$. Choose  $B\in\K, k\ge 2$ such 
that for every finite substructure $C$ of $F$ there is a colouring 
$\chi:C^A\to[k]$ which is {\em good for $C$}, i.e.\ $|\chi(b\circ B^A)|> d$ for all $b\in C^B$. 
The set $G(C):=\{\chi\in [k]^{F^A}\mid \chi\upharpoonright C^A\textrm{ is good for } C\}$ is nonempty and closed in $[k]^{F^A}$ carrying 
the product topology with $[k]$ being discrete. Given finitely many such sets $G(C_1),\ldots,G(C_n)$ their intersection 
contains the nonempty set $G(C)$ where $C$ is the substructure generated by $C_1\cup\ldots\cup C_n$ in $F$ 
(note that $C$ is finite by local finiteness of $F$). Since $[k]^{F^A}$ is compact, 
$\bigcap_CG(C)\neq\emptyset$ where $C$ ranges over the finite substructures of $F$. 
Any $\chi\in \bigcap_CG(C)$ is good for $F$, so $F\not\hookrightarrow (B)^A_{k,d}$. 
\end{proof}

We shall need the following result of Ne\v{s}et\v{r}il~\cite[T3.2]{nesetril}. 
We include the short proof.
%

\begin{lemma}\label{lem:Ramam} Let $\K$ be a hereditary class of finite structures with joint embedding. If $\K$ is Ramsey for embeddings, then it has amalgamation.
\end{lemma}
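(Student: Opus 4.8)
The plan is to prove the contrapositive in spirit: assume $\K$ is hereditary, has joint embedding, and is Ramsey for embeddings, and then directly construct an amalgam of a given span $a_0\in B_0^A$, $a_1\in B_1^A$. The key idea is that amalgamation is exactly what a $2$-coloring argument can force: I want to find $C\in\K$ together with embeddings $b_0\in C^{B_0}$, $b_1\in C^{B_1}$ satisfying $b_0\circ a_0=b_1\circ a_1$, and the obstruction to doing this naively is that in an arbitrary joint embedding $b_0,b_1$ of $B_0,B_1$ into some $D$ the images $b_0\circ a_0$ and $b_1\circ a_1$ of the common $A$ need not coincide.

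First I would set up the coloring. Using joint embedding, fix some $B\in\K$ into which both $B_0$ and $B_1$ embed, say via $e_0\in B^{B_0}$ and $e_1\in B^{B_1}$; then $e_0\circ a_0$ and $e_1\circ a_1$ are two (generally distinct) copies of $A$ inside $B$. The natural coloring to consider is a map $\chi$ on $C^A$ that records, for each embedding $a\in C^A$, some finite amount of ``local data'' distinguishing whether $a$ can be completed to an embedding of $B_0$ versus $B_1$ in a compatible way. Since Ramsey for embeddings gives, for this $B$ and $k=2$, a $C\in\K$ with $C\hookrightarrow (B)^A_{2,1}$ — degree $1$ because $\K$ is Ramsey for embeddings — I obtain a $b\in C^B$ with $|\chi(b\circ B^A)|\le 1$, i.e.\ $\chi$ is \emph{constant} on the $A$-copies sitting inside the single copy $b(B)$. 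The heart of the argument is to choose $\chi$ so that this constancy, applied to the two specific copies $b\circ e_0\circ a_0$ and $b\circ e_1\circ a_1$ of $A$ inside $C$, forces them to be genuinely identifiable, and then to use homogeneity/heredity to realign them into a true amalgam.

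The step I expect to be the main obstacle is defining $\chi$ correctly and extracting amalgamation from its constancy. A clean route is to color each $a\in C^A$ by a bit encoding ``the position of $a$ relative to a fixed reference copy,'' exploiting that any genuine amalgamation failure would let one two-color the $A$-copies so that no single copy of $B$ sees only one color — which is precisely what $C\hookrightarrow(B)^A_{2,1}$ rules out. Concretely, I would argue that if $\K$ failed amalgamation for the span $(a_0,a_1)$, one could build a consistent ``good'' $2$-coloring witnessing that no $b\in C^B$ has $|\chi(b\circ B^A)|\le 1$ for suitable large $C$, contradicting the Ramsey-for-embeddings hypothesis; this is the same compactness-flavored bookkeeping as in Lemma~\ref{lem:infinite2}. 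Making the coloring both well-defined on all of $C^A$ and genuinely sensitive to the amalgamation data is the delicate point, since the color must depend only on each individual embedding $a$, not on a pair, yet must detect a pairwise incompatibility; resolving this by encoding the relevant relational/generated-substructure type of $a$ inside $C$ (finite by local finiteness) is where the real work lies.
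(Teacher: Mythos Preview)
Your framework is exactly right --- joint embedding to get $B$ containing copies of $B_0,B_1$, then Ramsey for embeddings to get $C\hookrightarrow(B)^A_{k,1}$, then read off the amalgam from a monochromatic $b\in C^B$ --- and in fact you almost name the correct coloring when you write that $\chi$ should record ``whether $a$ can be completed to an embedding of $B_0$ versus $B_1$.'' The gap is that you then back away from this and never actually define $\chi$, instead drifting toward ``position relative to a fixed reference copy,'' ``relational/generated-substructure type,'' and a contrapositive/compactness argument. None of those vaguer ideas pins down a coloring with a \emph{fixed} finite number of colors independent of $C$, and you explicitly concede that ``this is where the real work lies.''

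But there is no real work: the coloring you first gestured at already does the job. The paper takes $k=4$ and colors $a\in C^A$ by the subset $\chi(a)\subseteq\{0,1\}$ where $i\in\chi(a)$ iff some $e\in C^{B_i}$ satisfies $e\circ a_i=a$. For a monochromatic $b\in C^B$, the embeddings $b\circ b_i\circ a_i$ witness that the constant colour contains both $0$ and $1$; hence for \emph{any} $a\in B^A$ there exist $e_0\in C^{B_0}$, $e_1\in C^{B_1}$ with $e_0\circ a_0=b\circ a=e_1\circ a_1$, and $C$ is the amalgam. (Your $k=2$ also works: colour $a$ by whether some $e\in C^{B_0}$ has $e\circ a_0=a$; the copy $b\circ b_0\circ a_0$ forces the constant colour to be ``yes,'' and applying this to $b\circ b_1\circ a_1$ yields $e_0$ with $e_0\circ a_0=(b\circ b_1)\circ a_1$.) No contrapositive, no compactness, no Lemma~\ref{lem:infinite2} is needed --- just commit to the coloring you already described.
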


\begin{proof} Let $A,B_0,B_1\in\K$ and $a_0\in B_0^A, a_1\in B_1^A$. Let $B\in\K$ and $b_0\in B^{B_0},b_1\in B^{B_1}$. Choose $C\in \K$ with $C\hookrightarrow(B)_{4,1}^{A}$. We claim that there exist $e_0\in C^{B_0},e_1\in C^{B_1}$ such that $e_0\circ a_0=e_1\circ a_1$. Consider the following colouring $\chi:C^A\to P(\{0,1\})$: for $a\in C^A$ the colour $\chi(a)\subseteq \{0,1\}$ contains $i\in\{0,1\}$ if and only if there exists an $e\in C^{B_i}$ such that $e\circ a_i=a$. Choose $b\in C^B$ such that $\chi(b\circ B^A)$ contains precisely one colour. 
Then this colour is $\{0,1\}$, because for $i\in\{0,1\}$ we have $i\in\chi(b\circ b_i \circ a_i)$ and $b\circ b_i \circ a_i\in b\circ B^A$. Let $a\in B^A$. Then $\chi(b\circ a)=\{0,1\}$, thus there are $e_0\in C^{B_0},e_1\in C^{B_1}$ such that $e_0\circ a_0=a=e_1\circ a_1$.
\end{proof}


\begin{remark}\label{rem:rigiddegrees} Clearly, $C\hookrightarrow (B)^A_{k,d}$ is equivalent to $C\rightarrow (B)^A_{k,d}$ when $A$ is rigid (i.e. $\Aut(A)=\{\id_A\}$). In particular, the Ramsey degree and the Ramsey degree for embeddings coincide for rigid structures. The following proposition generalizes this observation.
\end{remark}

\begin{proposition}\label{prop:emb} Let $d\in\N$, and let $\K$ be a class of finite structures. Let $A\in\K$ and $\ell=|\Aut(A)|$. 
The Ramsey degree for embeddings of $A$ in $\K$ is at most $d\cdot\ell$ if and only if 
the Ramsey degree of $A$ in $\K$ is at most $d$. 
\end{proposition}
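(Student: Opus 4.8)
The plan is to work through the surjection $\im\colon B^A\to{B\choose A}$ that sends an embedding to its image. Its fibres are exactly the right $\Aut(A)$-orbits $\{a\circ\sigma\mid\sigma\in\Aut(A)\}$, each of size $\ell$, so $|B^A|=\ell\cdot|{B\choose A}|$. The structural fact I would isolate first is a ``fibre-capture'' identity: for $b\in C^B$ with image $B'=b(B)$, the set $b\circ B^A$ is the disjoint union, over all $A'\in{B'\choose A}$, of the \emph{complete} fibre $\{a\in C^A\mid\im(a)=A'\}$. Indeed, an embedding $a'\in C^A$ has image inside $B'$ if and only if it factors as $b\circ a$ with $a=b^{-1}\circ a'\in B^A$, and composing one such $a$ with all of $\Aut(A)$ recovers the whole fibre over $A'$. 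This is the bridge between colouring embeddings and colouring copies, and it is what makes the factor $\ell$ surface in both directions.

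For the implication from ordinary degree to embedding degree, given $B\in\K$ and $k\ge2$ I would invoke the hypothesis with $2^k$ colours to get $C\in\K$ with $C\to(B)^A_{2^k,d}$. From a colouring $\chi\colon C^A\to[k]$ I pass to $\chi'\colon{C\choose A}\to P([k])$ assigning to $A'$ the \emph{set} of colours $\chi$ takes on the fibre over $A'$. Ordinary Ramsey yields $B'\in{C\choose B}$ on which $\chi'$ takes at most $d$ values; choosing $b$ with image $B'$ and applying fibre-capture, $\chi(b\circ B^A)$ is the union of these at most $d$ colour-sets, each of size at most $\ell$, hence has at most $d\ell$ colours.

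For the converse — the step I expect to be the real point — I would reverse the encoding. Fix an enumeration $\Aut(A)=\{\sigma_0,\dots,\sigma_{\ell-1}\}$, and from a copy-colouring $\chi\colon{C\choose A}\to[k]$, with $C$ chosen so that $C\hookrightarrow(B)^A_{k\ell,d\ell}$, build an embedding-colouring $\psi\colon C^A\to[k]\times[\ell]$ that \emph{rainbows each fibre}: picking one base embedding $a_{A'}$ per image $A'$, set $\psi(a_{A'}\circ\sigma_j)=(\chi(A'),j)$. Then on every complete fibre $\psi$ realises all $\ell$ colours $(\chi(A'),0),\dots,(\chi(A'),\ell-1)$. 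Applying $C\hookrightarrow(B)^A_{k\ell,d\ell}$ together with fibre-capture gives $\psi(b\circ B^A)=\chi({B'\choose A})\times[\ell]$, of cardinality exactly $\ell\cdot|\chi({B'\choose A})|$; the bound $\le d\ell$ then forces $|\chi({B'\choose A})|\le d$, as required.

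The main obstacle is entirely in this converse: the naive pullback $a\mapsto\chi(\im(a))$ is constant on fibres and only delivers the useless bound $d\ell$. The idea that rescues the factor is that $b\circ B^A$ always sees \emph{whole} fibres, so by deliberately spending $\ell$ distinct colours on each fibre one converts the $\ell$-fold loss into an $\ell$-fold gain. The remaining points are routine: that distinct $\sigma_j$ give distinct embeddings $a_{A'}\circ\sigma_j$ (so each fibre genuinely carries $\ell$ colours), that $k\ell\ge2$ and $2^k\ge2$ so the respective Ramsey hypotheses apply, and the elementary verification of the fibre-capture identity above.
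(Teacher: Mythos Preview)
Your proof is correct and follows essentially the same route as the paper's: in both directions you exploit that $b\circ B^A$ decomposes into complete $\Aut(A)$-fibres, and your ``rainbow'' colouring $\psi$ for the converse is exactly the paper's colouring $\chi'\colon a\mapsto(\chi(\im a),j)$. The only cosmetic difference is in the forward direction, where you encode the fibre-colours as a subset of $[k]$ (using $2^k$ colours) while the paper records them as an ordered $\ell$-tuple (using $k^\ell$ colours); both encodings deliver the bound $d\ell$ by the same count.
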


\begin{proof} First assume that the Ramsey degree for embeddings of $A$ in $\K$ is at most $d\cdot\ell$. Let $B\in\K$ and $k\ge 2$. We are looking for a $C\in\K$ such that $C\to(B)_{k,d}^A$. By assumption we find some $C\in\K$ with $C\hookrightarrow(B)^A_{k,d\cdot\ell}$ and we claim that this $C$ is as desired. Let a colouring $\chi:{C\choose A}\to[k]$ be given.
For every $A'\in {C\choose A}$ there are precisely $\ell$ embeddings $a^{A'}_0,\ldots, a^{A'}_{\ell-1}\in C^A$ with image $A'$. Define $\chi':C^A\to[k]\times[\ell]$ to map $a\in C^A$ to $(i,j)$ for $i:=\chi(\im(a))$ and $j$ such that
$a=a_j^{\im(a)}$. Since  $C\hookrightarrow(B)^A_{k,d\cdot\ell}$ there is $b\in C^B$ such that $|\chi'(b\circ B^A)|\le d\cdot\ell$. 
Observe that $(i,j)\in\chi'(b\circ B^A)$ implies $\{i\}\times[\ell]\subseteq\chi'(b\circ B^A)$. Hence, there are (not necessarily distinct) $i_0,\ldots,i_{d-1}\in[k]$ such that $\chi'(b\circ B^A)=\{i_0,\ldots,i_{d-1}\}\times[\ell]$.
Clearly, $\im(b)\in{C\choose B}$ and we claim that $\chi({\im(b)\choose A})\subseteq \{i_0,\ldots,i_{d-1}\}$. Indeed, for $A'\in{\im(b)\choose A}$ there is an $a\in B^A$ such that $\im(b\circ a)=A'$, namely $a:= b^{-1}\circ a'$ for some isomorphism $a':A\rightarrow A'$. As ${\im(b)\choose A}\subseteq {C\choose A}$ we find $j\in[\ell]$ such that $a_j^{A'}=b\circ a$. Then $\chi'(b\circ a)=(\chi(A'),j)$, and in particular $\chi(A')\in \{i_0,\ldots,i_{d-1}\}$.

Conversely, assume that the Ramsey degree of $A$ in $\K$ is at most $d$. Let $B\in\K$ and $k\ge 2$ be given. 
By assumption there exists a $C\in\K$ such that $C\to(B)^A_{k^\ell,d}$. We claim that $C\hookrightarrow(B)_{k,d\cdot\ell}^A$.
Let $\chi:C^A\to[k]$ be a colouring and define $\chi':{C\choose A}\to[k]^{\ell}$ by setting
$\chi'(A'):=(\chi(a_0^{A'}),\ldots,\chi(a_{\ell-1}^{A'}))$ for $A'\in{C\choose A}$; here, for $A'\in{C\choose A}$ we let $a_0^{A'},\ldots,a_{\ell-1}^{A'}$ enumerate the embeddings in $C^A$ with image $A'$. Since $C\rightarrow(B)_{k^{\ell},d}^A$ there exists $B'\in{C\choose B}$ and $(i^{0}_0,\ldots, i^0_{\ell-1}),\ldots,(i^{d-1}_0,\ldots, i^{d-1}_{\ell-1})\in[k]^{\ell}$ 
such that $\chi'({B'\choose A})\subseteq\{(i^\nu_0,\ldots,i^\nu_{\ell-1})\mid \nu\in[d]\}$. 
Choose $b \in C^B $ with image $B'$. We claim that $\chi(b\circ B^A)\subseteq\{i^{\nu}_j\mid \nu\in[d],j\in[\ell]\}$.
Let $a\in B^A$. Then $b\circ a\in C^A$ and $\im(b\circ a)\in {B'\choose A}\subseteq {C\choose A}$. Choose $j\in[\ell]$ such that
$b\circ a=a_j^{\im(b\circ a)}$. Let $\nu\in[d]$ be such that 
$\chi'(\im(b\circ a))=(\chi(a_0^{\im(b\circ a)}),\ldots,\chi(a_{\ell-1}^{\im(b\circ a)}))=
(i^\nu_0,\ldots,i^\nu_{\ell-1})$. Hence, $\chi(b\circ a)=\chi(a_{j}^{\im(b\circ a)})=i^{\nu}_j$.
\end{proof}

\begin{corollary}\label{cor:emb2} Let $\K$ be a class of finite structures and $A\in \K$. Then
the Ramsey degree of $A$ in $\K$ is 1 if and only if the Ramsey degree for embeddings of $A$ in $\K$ is $|\Aut(A)|$. 
\end{corollary}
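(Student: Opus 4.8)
The plan is to read the statement off Proposition~\ref{prop:emb} (taken with $d=1$) after supplying two elementary lower bounds. Write $\ell=|\Aut(A)|$, and let me abbreviate by $r$ the Ramsey degree of $A$ in $\K$ and by $r_\emb$ its Ramsey degree for embeddings. Proposition~\ref{prop:emb} with $d=1$ says precisely that $r_\emb\le\ell$ if and only if $r\le1$. The two lower bounds I need are that $r\ge1$ and $r_\emb\ge\ell$ always hold; granting them, $r=1$ is equivalent to $r\le1$, while $r_\emb=\ell$ is equivalent to $r_\emb\le\ell$, and the proposition links the two ``$\le$'' statements, yielding the claimed equivalence $r=1\Leftrightarrow r_\emb=\ell$.

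Both lower bounds are obtained by testing the defining conditions at $B=A$. For $r\ge1$: with $B=A$ every copy $B'\in{C\choose A}$ is isomorphic to $A$, so ${B'\choose A}\neq\emptyset$ and hence $|\chi({B'\choose A})|\ge1$ for any colouring $\chi$, which rules out degree $0$. The bound $r_\emb\ge\ell$ is the only real content. It is trivial for $\ell=1$, so I assume $\ell\ge2$ and take $B=A$ and $k=\ell$. For an arbitrary $C\in\K$ I will produce a colouring $\chi\colon C^A\to[\ell]$ with $|\chi(b\circ B^A)|=\ell$ for every $b\in C^B$; this shows $C\not\hookrightarrow(B)^A_{\ell,d}$ for each $d<\ell$, and since $C$ is arbitrary it forces $r_\emb\ge\ell$.

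To build $\chi$, note that for $B=A$ we have $B^A=\Aut(A)$ (a self-embedding of a finite structure is an automorphism), so $b\circ B^A=b\circ\Aut(A)$ is exactly the fibre of embeddings in $C^A$ sharing the image $\im(b)$, a set of size $\ell$. It therefore suffices to colour $C^A$ injectively on each such fibre. Fixing a reference embedding $e_{A'}\in C^A$ for every image $A'\in{C\choose A}$, every $a\in C^A$ factors uniquely as $a=e_{\im(a)}\circ\sigma_a$ with $\sigma_a\in\Aut(A)$; setting $\chi(a):=\phi(\sigma_a)$ for a fixed bijection $\phi\colon\Aut(A)\to[\ell]$ does the job, since on the fibre over $A'$ the map $a\mapsto\sigma_a$ is a bijection onto $\Aut(A)$. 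Hence $\chi(b\circ B^A)=[\ell]$ for every $b$, as required. The one point to watch is exactly this bound $r_\emb\ge\ell$: Proposition~\ref{prop:emb} constrains $r_\emb$ only at the multiples $0,\ell,2\ell,\dots$ of $\ell$ and on its own yields merely $r_\emb\ge1$, so the exact value $\ell$ genuinely needs the fibrewise colouring above.
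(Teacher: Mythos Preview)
Your proof is correct and follows essentially the same approach as the paper: both reduce to Proposition~\ref{prop:emb} with $d=1$ and supply the missing lower bound $r_\emb\ge\ell$ by colouring each embedding $a\in C^A$ according to its position in the $\Aut(A)$-fibre over $\im(a)$ and testing at $B=A$. Your framing is a bit more explicit (you spell out the trivial $r\ge1$ and use the factorisation $a=e_{\im(a)}\circ\sigma_a$ rather than the paper's enumeration $a_j^{A'}$), but the content is the same.
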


\begin{proof}
By Proposition~\ref{prop:emb} is suffices to show that the Ramsey degree for embeddings of $A$ in $\K$ is at least 
$\ell:=|\Aut(A)|$. Let $C\in \K$ be arbitrary. Using the notation from the previous proof, let $\chi:C^A\to[\ell]$ map 
$a\in C^A$ to the $j<\ell$ such that $a=a_j^{\im(a)}$. Then for $B:= A$ and 
every $b\in C^B$ we have $\chi(b\circ C^B)=[\ell]$.
\end{proof}

Our main result concerning the relationship of Ramsey degrees and Ramsey degrees for embeddings is the following.

\begin{theorem}\label{thm:mystery}
 Let $\K$ be a relational Fra\"iss\'e class which is Ramsey. Then the 
Ramsey degree for embeddings of $\K$ is infinite or a finite power of 2.
\end{theorem}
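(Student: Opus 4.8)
First I would reduce the statement to a question about a single finite group. Since $\K$ is Ramsey, every $A\in\K$ has Ramsey degree $1$, so by Corollary~\ref{cor:emb2} its Ramsey degree for embeddings equals $|\Aut(A)|$. Taking the supremum over $A\in\K$, the Ramsey degree for embeddings of $\K$ equals $\sup_{A\in\K}|\Aut(A)|$. If this is infinite we are done, so assume it is a finite number $d$; as a supremum of positive integers it is attained, say $d=|\Aut(A_0)|$. By Theorem~\ref{thm:char} we then have $|M(G_\K)|=d$, and Proposition~\ref{prop:todor} together with Lemma~\ref{lem:todorpart} produces an extremely amenable, open, normal subgroup $N\trianglelefteq G_\K$ of index $d$ with $M(G_\K)$ isomorphic to the $G_\K$-flow $Q:=G_\K/N$ under left translation, a finite group of order $d$. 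By Cauchy's theorem, $d$ is a power of $2$ if and only if $Q$ has no element of odd prime order, so only the latter remains to be shown.

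Next I would represent $Q$ concretely by linear orders. Because $\K$ is Ramsey, the companion theory (Theorem~\ref{thm:minflow} and the discussion in the Introduction) provides a reasonable ordered Fra\"iss\'e expansion $\K^*$ of $\K$ that is Ramsey and has the ordering property, with $X_{\K^*}=M(G_\K)$; write $F=\Flim(\K)$ and let $<^*$ be the distinguished order with $\Flim(\K^*)=(F,<^*)$. Then $N=\Aut(F,<^*)$: this group is extremely amenable by Theorem~\ref{thm:extremeamenability}, it is the stabiliser of the point $<^*$ in $X_{\K^*}\cong M(G_\K)$, and since all point stabilisers of a minimal flow are conjugate while $N$ is normal, the two coincide. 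Consequently $g\notin N$ forces $g(<^*)\neq {<^*}$, and the $d$ orders $\{g(<^*)\mid g\in G_\K\}=X_{\K^*}$ form a faithful, simply transitive $Q$-set of linear orders on $F$, on which left translation by $gN$ permutes the $d$ orders in cycles all of length $\mathrm{ord}(gN)$.

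The decisive and hardest step is to rule out elements of odd prime order in $Q$, and here the relational hypothesis is essential: for function languages such as $\textup{GL}(V_{\infty,F})$ the degree is infinite precisely because arbitrarily large odd-order symmetries persist. The natural tool is order reversal $r\colon {<}\mapsto{<^{\mathrm{op}}}$, a $G_\K$-equivariant involution of the flow $\textit{LO}$ (Example~\ref{exa:lo}) that preserves the set of $N$-fixed orders and commutes with the $Q$-action; after checking (using relationality) that $(<^*)^{\mathrm{op}}\in X_{\K^*}$, it acts on $X_{\K^*}\cong Q$ as right translation by an involution, which already shows $d$ is even. Suppose now $qN$ had odd prime order $p$; then a representative $g$ cyclically permutes the $p$ distinct admissible orders $<^*,g(<^*),\dots,g^{p-1}(<^*)$. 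My plan is to exploit that, in a relational language, the induced structure on any finite subset of $F$ is just that subset, so the reversal operation localises and the reverse expansion $(\K^*)^{\mathrm{op}}$ can be compared with $\K^*$ via reasonableness (Lemma~\ref{lem:reason}) and amalgamation, thereby manufacturing, beyond the single global reversal, enough involutions acting on $X_{\K^*}$ to force $Q$ to be a $2$-group and so to exclude the $p$-cycle. Upgrading ``even'' to ``a power of $2$'' in this fashion—that is, extracting from relationality the full supply of involutions rather than just the one coming from global reversal—is the step I expect to be the main obstacle.
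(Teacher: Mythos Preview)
Your reduction is correct and coincides with the paper's route: pass from finite Ramsey degree for embeddings $d$ to $|M(G_\K)|=d$ via Theorem~\ref{thm:char}, produce a companion $\K^*$ (this is Proposition~\ref{prop:ramseycompanion}, proved through the forgetful expansion of Lemma~\ref{lem:forget} rather than Theorem~\ref{thm:minflow} alone), and identify $N=G_{\K^*}$ as the extremely amenable normal subgroup of index $d$, so that the task becomes showing $Q=G_\K/G_{\K^*}$ is a $2$-group.

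The gap is in your attack on $Q$. Global order reversal, even if one grants $(<^*)^{\mathrm{op}}\in X_{\K^*}$ (which is not automatic---it already fails when $d=1$, and for $d>1$ you have not explained why relationality forces it for an arbitrary companion), yields a single element of order $2$ in $Q$. That gives at best $2\mid d$, not that $d$ is a $2$-power: a group generated by involutions need not be a $2$-group, as $S_3$ shows. So ``manufacturing enough involutions acting on $X_{\K^*}$'' cannot by itself exclude a $p$-cycle, and your plan to ``localise reversal'' and compare $(\K^*)^{\mathrm{op}}$ with $\K^*$ via amalgamation remains a hope rather than an argument.

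The paper's mechanism is sharper and different in kind: it proves $g^2\in G_{\K^*}$ for \emph{every} $g\in G_\K$, so $Q$ is elementary abelian. Call a $2$-orbit $S$ of $G_{\K^*}$ \emph{black} if $a<^*b$ for all $(a,b)\in S$, \emph{white} if $b<^*a$. Normality of $G_{\K^*}$ makes $g(S)$ again a $2$-orbit. Relationality enters exactly here: if $g(S)$ has the \emph{same} colour as $S$, then for $(a,b)\in S$ the restriction $g\upharpoonright\{a,b\}$ preserves all $L$-relations (every subset is a substructure) and also $<^*$, hence is a partial isomorphism of $(F,<^*)$ and extends into $G_{\K^*}$ by homogeneity, forcing $g(S)=S$. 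Now if $g^2\notin G_{\K^*}$, some coloured $S$ has $g^2(S)$ of the opposite colour; then $g(S)$ matches one of them, giving $g(S)=S$ or $g(S)=g^2(S)$, and either equality collapses to a contradiction. This local black/white dichotomy on pairs---not a global reversal symmetry---is the missing idea.
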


We refer to Examples~\ref{exas:rationals} for some natural 
examples of relational Fra\"iss\'e classes which are Ramsey and have infinite Ramsey degree for embeddings.
We prove Theorem~\ref{thm:mystery} in Section~\ref{sec:cor2}.

%


\subsection{Proof of Theorem~\ref{thm:char}} \label{sec:proof}
Theorem~\ref{thm:char} is a consequence of the following 
two propositions which in fact establish something stronger.

We say that a class of finite structures $\mathcal D$ is {\em cofinal in} another 
such class $\K$ if for all $A\in\K$ there exists $B\in\mathcal D$ such that $A\le B$.

\begin{proposition}\label{prop:ramtoam} Let $d\in\N$ and $\K$ be a Fra\"iss\'e class.
Assume that the class of structures with Ramsey degree  for embeddings  at most $d$ in $\K$ is cofinal in $\K$.
Then $M(G_\K)$ has size at most $d$.
\end{proposition}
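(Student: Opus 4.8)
The plan is to prove directly that the universal minimal flow $M:=M(G_\K)$ of $G:=G_\K$ has at most $d$ points (this is the assertion; by Lemma~\ref{lem:orb} it is equivalent to every nonempty flow having an orbit of size $\le d$). Write $F:=\Flim(\K)$. Two standing facts drive everything. First, homogeneity of $F$ identifies, for each finite $A\le F$, the $G$-set $F^A$ of embeddings with the coset space $G/G_{(A)}$ via $g\mapsto g\upharpoonright A$, the fibre over an embedding being a left coset of $G_{(A)}$. Second, $M$ is a minimal flow of the non-archimedean group $G$, hence zero-dimensional (it may be realised as a minimal subflow of the greatest ambit, a Stone space); therefore it suffices to show that every continuous colouring $c\colon M\to[k]$ takes at most $d$ values. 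Fixing a point $z\in M$ with dense orbit, minimality together with the openness of the fibres $c^{-1}(i)$ gives $c(M)=c(G\cdot z)$, so the whole problem reduces to bounding the number of colours that $c$ realises along a single orbit.

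The core step turns such a $c$ into a colouring of some $F^{A_0}$ to which Lemma~\ref{lem:infinite2} applies. First I would note that, since $c$ is finite-valued and continuous and the action is continuous on the compact space $M$, there is a finite $A_0\le F$ with $c(vw)=c(w)$ for all $w\in M$ and all $v\in G_{(A_0)}$ (a compactness argument at the identity). Enlarging $A_0$ using the cofinality hypothesis, I may moreover assume $A_0$ has Ramsey degree for embeddings at most $d$. Then $\gamma(g):=c(g^{-1}z)$ is invariant under right multiplication by $G_{(A_0)}$, so it descends to a genuine colouring $\chi\colon F^{A_0}\to[k]$ with $\chi(g\upharpoonright A_0)=c(g^{-1}z)$. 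For every finite $B$ with $A_0\le B\le F$, Lemma~\ref{lem:infinite2} (in the form $F\hookrightarrow(B)^{A_0}_{k,d}$) produces $b\in F^B$ with $|\chi(b\circ B^{A_0})|\le d$; extending $b$ to $\hat b_B\in G$ and unwinding the identification shows that $T_B:=\{\,c(h^{-1}\hat b_B^{-1}z)\mid h\in G,\ h(A_0)\subseteq B\,\}$ satisfies $|T_B|\le d$.

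Finally I would pass to a limit. Choose an ultrafilter $\mathcal U$ on the finite substructures $B\ge A_0$ that refines the order filter and along which $T_B$ is eventually equal to a fixed set $T$ with $|T|\le d$, and set $y:=\lim_{\mathcal U}\hat b_B^{-1}z\in M$. For a fixed $g\in G$ the condition $g^{-1}(A_0)\subseteq B$ holds for $\mathcal U$-almost all $B$, and then $h:=g^{-1}$ witnesses $c(g\hat b_B^{-1}z)=c(h^{-1}\hat b_B^{-1}z)\in T_B=T$; by continuity $c(gy)=\lim_{\mathcal U}c(g\hat b_B^{-1}z)\in T$. Hence $c(G\cdot y)\subseteq T$, and since $G\cdot y$ is dense this forces $c(M)\subseteq T$, i.e. $|c(M)|\le d$. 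As $c$ was arbitrary and $M$ is zero-dimensional, $|M|\le d$.

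I expect the main obstacle to be arranging the two sides so that the Ramsey conclusion actually controls the orbit of the limit point: the colouring lives on left cosets, while the Ramsey copy $b\circ B^{A_0}$ is produced on the right, and a naive choice lets the translating automorphism scramble colours. The device that makes it work is to colour through the \emph{inverse} orbit map $g\mapsto g^{-1}z$, so that the fixed colouring-level $A_0$ sits on the left and the growing copy $B$ on the right; this is exactly what guarantees $g^{-1}(A_0)\subseteq B$ eventually, and hence that each value $c(gy)$ is captured by some $T_B$. The cofinality hypothesis enters at precisely one point — upgrading the level $A_0$ dictated by $c$ to a nearby structure of Ramsey degree for embeddings at most $d$ — and the zero-dimensionality of $M$ is what converts ``every finite colouring uses at most $d$ colours'' into ``at most $d$ points''.
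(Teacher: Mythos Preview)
Your argument is correct. The key computations --- that the clopen partition induced by $c$ is $G_{(A_0)}$-invariant for some finite $A_0\subseteq F$ (compactness of $M$ plus joint continuity of the action), that $\gamma(g)=c(g^{-1}z)$ is right $G_{(A_0)}$-invariant and therefore descends to $\chi\colon F^{A_0}\to[k]$, that $|T_B|\le d$ via $(\hat b_B h)\!\upharpoonright\! A_0=b\circ(h\!\upharpoonright\! A_0)\in b\circ B^{A_0}$, and the ultrafilter-limit passage --- all go through as written. The inversion trick $g\mapsto c(g^{-1}z)$ is indeed the point that aligns the two sides, and your identification of that as the crux is accurate.

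The paper's proof is organized differently. It does not restrict attention to $M(G_\K)$ or invoke its zero-dimensionality; instead it shows that an \emph{arbitrary} $G_\K$-flow $X$ has an orbit of size at most $d$ and then quotes Lemma~\ref{lem:orb}. To handle a general $X$ the paper cannot reduce to finite colourings, so it works with bounded left uniformly continuous $f\colon G\to\mathbb{R}^n$ and proves an $\varepsilon$-approximate version of your Ramsey step (their Claim~2), obtained by discretising $f$ to a function constant on cosets of a suitable $G_{(A)}$ (their Claim~1, which is exactly your $\chi$-construction). A compactness argument on closed sets $Y(H,f,\varepsilon)\subseteq X$ then produces a single $x_0$ whose orbit is small with respect to \emph{all} such $f$ simultaneously. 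Your route trades this analytic approximation layer for the structural fact that $M(G_\K)$ sits inside the zero-dimensional greatest ambit; this lets you stay with finite colourings throughout and replace the nested compactness argument by a single ultrafilter limit. The cost is that your proof, as written, is specific to $M(G_\K)$ rather than to general flows --- which is harmless here since the proposition only asserts $|M(G_\K)|\le d$, and Lemma~\ref{lem:orb} recovers the general statement if one wants it.
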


\begin{proof}
Write $G:=G_\K$ and $F:=\Flim(\K)$. Let $A\in\K, a_0\in F^A$ and write $A_0:=\im(a_0)$. 
Consider the map $\Phi: G\rightarrow F^A$, $g\mapsto g\circ a_0$ . 
By homogeneity of $F$, $\Phi$ is surjective. We have for all $g,h\in G$
$$g\circ a_0=h\circ a_0\Longleftrightarrow gG_{(A_0)}=hG_{(A_0)}.$$
Hence, $\Phi$ induces a bijection $\emb$ from $G/G_{(A_0)}$ onto $F^A$. Observe that 
\begin{equation}\label{eq:e}
g\circ \emb(hG_{(A_0)})=g\circ (h \circ a_0)= (gh)\circ a_0=\emb((gh)G_{(A_0)}). 
\end{equation}

\noindent{\em Claim 1.} Assume that $A$ has Ramsey degree  for embeddings at most $d$ in $\K$. 
Let $k\in\N$ and $f:G\to [k]$ be constant on each $gG_{(A_0)}\subseteq G$ for $g\in G$. Then, for every finite $H\subseteq G$ there exists $g\in G$ such that $|f(gH)|\le d$.\medskip

\noindent{\em Proof of Claim 1:} The function $f$ induces a function $\tilde f$ from 
$G/G_{(A_0)}$ to $[k]$. Note that $\tilde f\circ \emb^{-1}:F^A\to [k]$. 
There is a finite substructure $B\subseteq F$ such that  
\begin{equation}\label{eq:B}
\{\emb(hG_{(A_0)})\mid h\in H\}\subseteq B^A.
\end{equation}
By Lemma~\ref{lem:infinite2} there is $b\in F^B$ such that $|(\tilde f\circ \emb^{-1})(b\circ B^A)|\le d$.
By homogeneity of~$F$ there is a $g\in G$ such that $g\circ \textup{id}_B=b$.
We show that $g$ is as desired, namely $f(gh)\in (\tilde f\circ \emb^{-1})(b\circ B^A)$ for every $h\in H$: 
\begin{align*}
f(gh)=\tilde f((gh)G_{(A_0)})&=\tilde f\circ \emb^{-1} (\emb((gh)G_{(A_0)}))
=\tilde f\circ \emb^{-1} (g \circ \emb(hG_{(A_0)})  )
\end{align*}
where the last equality follows from \eqref{eq:e}. By \eqref{eq:B} we have 
$g\circ \emb(hG_{(A_0)})\in g\circ B^A= b\circ B^A$, and our claim follows.\hfill$\dashv$\medskip

For $n\in\N,n\ge 1,$ consider $\mathbb R^n$ with the Euclidian norm $\|\cdot\|$. For $\varepsilon>0$ and $x\in\mathbb R^n$ let $$B_{\varepsilon}(x) :=\{y\in \mathbb R^n\mid \|x-y\|<\varepsilon\}.$$

As a topological group $G$ carries its {\em left} uniformity, that is, the uniformity with basic entourages $\{(g,h)\mid g^{-1}h\in G_{(A)}\}$ for $A\in\Age(M), A\subseteq M$.
\medskip

\noindent{\em Claim 2.} Let $n$ be a positive integer, $f:G\to \mathbb R^n$ be left uniformly continuous and bounded, $H\subseteq G$ be finite and $\varepsilon$ be a positive real.
Then there are $g\in G$ and $h_0,\ldots,h_{d-1}\in H$ such that 
\begin{equation}\label{eq:claim2}
\textstyle
f(gH)\subseteq \bigcup_{\nu<d}B_{\varepsilon}(f(gh_\nu)).
\end{equation}

\noindent{\em Proof of Claim 2:} By left uniform continuity of $f$ there is a finite substructure  
$A'\subseteq F$ such that $\|f(g)- f(g')\|<\varepsilon/6$ for all $g,g'\in G$ with $gG_{(A')}=g'G_{(A')}$. 
By our cofinality assumption, there exist $A''\in\K$ and $a'\in (A'')^{A'}$ such that $A''$ 
has Ramsey degree for embeddings at most $d$ in $\K$. Since $F$ is homogeneous, there is an embedding $a''\in F^{A''}$ such that $a''\circ a'=\id_{A'}$. Hence, the image $A$ of $a''$ has Ramsey degree for embeddings at most $d$ in $\K$, and $A'\subseteq A\subseteq F$. Thus $G_{(A)}\subseteq G_{(A')}$, so
for all $g,g'\in G$ with $gG_{(A)}=g'G_{(A)}$
\begin{equation}\label{eq:A}
\|f(g)- f(g')\|<\varepsilon/6.
\end{equation}

We claim that there exists a function $\tilde f:G\to \mathbb R^n$ such that
\begin{enumerate}\itemsep=0pt
\item[(a)] $\im(\tilde f)$ is finite;
\item[(b)] $\tilde f$ is constant on $gG_{(A)}$ for every $g\in G$;
\item[(c)] $\|f(g)-\tilde f(g)\|<\varepsilon/2$ for every $g\in G$.
\end{enumerate}
By (a) and (b) we can apply Claim 1 and obtain some $g\in G$ such that $|\tilde f(gH)|\le d$. Choose $h_0,\ldots, h_{d-1}\in H$
such that $\tilde f(gH)=\{\tilde f(gh_\nu) \mid \nu<d\}$. To verify~\eqref{eq:claim2}, let $h\in H$ be given.
We have to show that there exists $\nu<d$ such that  
$\|f(gh)-f(gh_\nu)\|<\varepsilon$. Indeed, this holds for $\nu<d$ such that $\tilde f(gh)=\tilde f(gh_\nu)$, because by (c) 
we have both $\|f(gh)-\tilde f(gh_\nu)\|=\|f(gh)-\tilde f(gh)\|<\varepsilon/2$ and
$\|\tilde f(gh_\nu)-f(gh_\nu)\|<\varepsilon/2$.

Thus, we are left to find $\tilde f$ with properties (a)-(c).

As $f$ is bounded, its image is contained in a compact subset of $\mathbb R^n$. Choose finitely many points
$y_\nu\in\mathbb R^n, \nu<k',$ such that this compact set is covered by $\bigcup_{\nu<k'} B_{\varepsilon/6}(y_\nu)$. 
Assume that precisely the first $k\le k'$ balls 
$B_{\varepsilon/6}(y_\nu)$ contain a point from the image of $f$. For $\nu<k$ choose $\widehat{\nu}\in G$ such 
that $f(\widehat{\nu})\in B_{\varepsilon/6}(y_\nu)$. Then 
$\bigcup_{\nu<k}B_{\varepsilon/3}(f(\widehat{\nu}))$ covers the image of~$f$. Hence, for every $g\in G$ we can choose
$\nu_g<k$ such that 
\begin{equation}\label{eq:nug}
\|f(g)-f(\widehat{\nu_g})\|<\varepsilon/3. 
\end{equation}
Let $c:G\to G$ be a selector for the partition $\{gG_{(A)}\mid g\in G\}$ of $G$, that is, for all $g,g'\in G$
 we have $c(g)\in gG_{(A)}$, and $c(g)=c(g')$ if and only if $gG_{(A)}=g'G_{(A)}$.
Define 
$$
\tilde f(g):=f(\widehat{\nu_{c(g)}}).
$$ 
Then $\tilde f$ satisfies (a) and (b). 
For all $g\in G$ we have $c(g)\in gG_{(A)}$, so $gG_{(A)}=c(g)G_{(A)}$ and thus 
$\|f(g)-f(c(g))\|<\varepsilon/6$ by \eqref{eq:A}. As
$\|f(c(g))-f(\widehat{\nu_{c(g)}})\|<\varepsilon/3$ by \eqref{eq:nug}, we conclude 
that $\tilde f$ satisfies  (c).\hfill$\dashv$\medskip

We aim to show that every $G$-flow has an orbit of size at most $d$ (Lemma~\ref{lem:orb}). So let $X$ be a $G$-flow. We are looking for some $x_0\in X$ such that
\begin{equation}\label{eq:goal}
|G\cdot x_0|\le d.
\end{equation}

Recall that the compact Hausdorff space $X$ carries a unique uniformity compatible with its topology. 
Suppose $f$ is a uniformly continuous function from $X$ into $\mathbb R^{n}$ for some $n\ge 1$. For each $x\in X$
define the function $f_x:G\to  \mathbb R^{n}$ by 
$$
f_x(g):=f(g^{-1}\cdot x).
$$ 
Then $f_x$ is left uniformly continuous. This follows from the well-known fact that
for every $x\in X$ the map $g\mapsto g^{-1}\cdot x$ is left uniformly continuous (see~e.g.~\cite[L2.1.5]{pestov}). 

%

With a triple $(H,f,\varepsilon)$ for a finite subset $H\subseteq G$, and a bounded, 
uniformly continuous function $f:X\to\mathbb R^{n}$, and a real $\varepsilon>0$ we associate the set
$$\textstyle
Y(H,f,\varepsilon):=\left\{x\in X\mid \exists h_0,\ldots, h_{d-1} \in H: f_x(H)\subseteq \bigcup_{\nu<d}\overline{B_{\varepsilon}(f_x(h_\nu))}\right\}.
$$
Since $H$ is finite, $Y(H,f,\varepsilon)$ is a finite union of closed sets of the form
$\{x\in X\mid f_x(H)\subseteq C\}$ for $C\subseteq \mathbb R^n$ closed, and consequently, 
$Y(H,f,\varepsilon)$ is closed.

%

\medskip

\noindent{\em Claim 3.} The family of closed sets $Y(H,f,\varepsilon)$ with $H,f,\varepsilon$ as above has the finite intersection property.\medskip

\noindent{\em Proof of Claim 3:}
For $j<\ell$ let $H_j\subseteq G$ be finite, $\varepsilon_j>0$ and $f^j:X\to \mathbb R^{n_j}$ for $n_j\ge 1$.
Set $H:=\bigcup_{j<\ell}H_j,\varepsilon:=\min_{j<\ell}\varepsilon_j, n:=\sum_{j<\ell}n_j$ and
define $f:X\to\mathbb R^n$ by $f(x):=f^0(x)*\cdots *f^{\ell-1}(x)$ where $*$ denotes concatenation. Then $f$ is uniformly continuous and bounded.

Let $x\in X$ be arbitrary. Since $f_x:G\to\mathbb R^n$ is left uniformly continuous, Claim~2 applies, and there exist
$g\in G$ and $h_0,\ldots,h_{d-1}\in H$ such that
$f_x(gH)\subseteq \bigcup_{\nu<d}B_{\varepsilon}(f_x(gh_\nu))$. In other words,
\begin{equation}\label{eq:cl41}
\forall h\in H\ \exists \nu<d:\ f(h^{-1}g^{-1}x)\in B_{\varepsilon}(f(h_\nu^{-1}g^{-1}x)).
\end{equation}

Any $y\in \mathbb R^n$ can be written as $y[0]*\cdots *y[\ell-1]$, where $y[j]\in\mathbb R^{n_j}$ for all 
$j<\ell$. In this notation, $f_x(g)[j]=f^j_x(g)$ for all $g\in G,x\in X, j<\ell$.
Clearly, $f_x(g)\in B_\varepsilon(y)$ implies $f_x(g)[j]\in B_\varepsilon(y[j])$ for all $y\in \mathbb R^n, j<\ell$.
Writing $x_0:=g^{-1}x$, ~\eqref{eq:cl41} yields: 
\begin{equation*}
\forall j<\ell\ \forall h\in H_j\ \exists \nu<d:\ f(h^{-1}g^{-1}x)[j]=f^j_{x_0}(h)\in B_{\varepsilon}(f^j_{x_0}(h_\nu)).
\end{equation*}
Since $\varepsilon\le \varepsilon_j$ we obtain
$$\textstyle
\forall j<\ell:\ f^j_{x_0}(H_j)\subseteq\bigcup_{\nu<d}B_{\varepsilon_j}(f^j_{x_0}(h_\nu)).
$$
Thus,  $x_0\in \bigcap_{j<\ell}Y(H_j,f^j,\varepsilon_j)\neq\emptyset$.\hfill$\dashv$\medskip

By Claim 3 and since $X$ is compact, there exists an $x_0$ in the intersection of all the sets $Y(H,f,\varepsilon)$, 
$(H,f,\varepsilon)$ a triple as above. We claim that $x_0$ satisfies \eqref{eq:goal}.
Assume otherwise that there  are $g_0,\ldots,g_{d}\in G$ such that $g_0x_0,\ldots,g_{d}x_0$ are pairwise distinct. Choose 
 $f:X\to[0,1]\subseteq \mathbb R^1$ uniformly continuous such that $f(g_\nu x_0)=\nu/d$ for all 
$\nu\le d$. 
Then $x_0\notin Y(\{g^{-1}_\nu\mid \nu\le d\}, f,\varepsilon)$ for a small enough $\varepsilon>0$, a contradiction.\end{proof}

\begin{proposition} \label{prop:amtoram}
Let $d\in\N$, $F$ be countable and locally finite, $G:=\Aut(F)$ and $A\in\Age(F)$ such that
$F$ is $A$-homogeneous. If $M(G)$ has size at most $d$, then $F\hookrightarrow (B)^A_{k,d}$ for all $B\in\Age(F)$ and $k\ge 2$.
\end{proposition}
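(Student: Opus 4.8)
The plan is to encode an arbitrary colouring as a point in a compact $G$-flow and to exploit the hypothesis $|M(G)|\le d$ through Lemma~\ref{lem:orb}. Concretely, fix $B\in\Age(F)$, $k\ge 2$ and a colouring $\chi\colon F^A\to[k]$; I must produce $b'\in F^B$ with $|\chi(b'\circ B^A)|\le d$. I equip $X:=[k]^{F^A}$ with the product topology ($[k]$ discrete), so that $X$ is compact Hausdorff, and let $G=\Aut(F)$ act by $(g\cdot\varphi)(a):=\varphi(g^{-1}\circ a)$ for $\varphi\in X$ and $a\in F^A$. Since $G$ acts on the index set $F^A$ (via $g\cdot a=g\circ a$) with open point stabilizers — the stabilizer of $a$ being $G_{(\im(a))}$ — this is a continuous left action, so $X$ is a $G$-flow, and the orbit closure $\overline{G\cdot\chi}$ is a nonempty subflow.

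Next I would apply Lemma~\ref{lem:orb} to the flow $\overline{G\cdot\chi}$: since $|M(G)|\le d$, it contains a point $\psi$ whose orbit has size at most $d$. Writing $H:=\{g\in G\mid g\cdot\psi=\psi\}$ for its stabilizer, the orbit--stabilizer correspondence gives $[G:H]=|G\cdot\psi|\le d$. Unwinding the action, $g\in H$ means $\psi(g\circ a)=\psi(a)$ for all $a\in F^A$, so $\psi$ is constant on every $H$-orbit of $F^A$. The key observation is that $\psi$ therefore takes at most $d$ distinct values: $A$-homogeneity of $F$ says exactly that $G$ acts transitively on $F^A$, so fixing $a_0\in F^A$ identifies $F^A$ with $G/G_{(\im(a_0))}$ as a $G$-set; the $H$-orbits on it correspond to the double cosets $H\backslash G/G_{(\im(a_0))}$, whose number is bounded by $[G:H]\le d$ via the surjection $H\backslash G\twoheadrightarrow H\backslash G/G_{(\im(a_0))}$. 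Hence $|\psi(F^A)|\le d$.

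Finally I would transfer this back to $\chi$ by finite approximation. As $A$ and $B$ are finite (local finiteness of $F$), the set $S:=b\circ B^A\subseteq F^A$ is finite for any fixed $b\in F^B$ (which exists since $B\in\Age(F)$). Because $\psi\in\overline{G\cdot\chi}$, the basic product-open neighbourhood of $\psi$ determined by the coordinates in $S$ meets $G\cdot\chi$, so there is $g\in G$ with $(g\cdot\chi)\upharpoonright S=\psi\upharpoonright S$. Then for every $a\in B^A$ one has $\chi(g^{-1}\circ b\circ a)=(g\cdot\chi)(b\circ a)=\psi(b\circ a)$, so $\chi\bigl((g^{-1}\circ b)\circ B^A\bigr)\subseteq\psi(F^A)$ has at most $d$ elements; taking $b':=g^{-1}\circ b\in F^B$ finishes the argument.

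I expect the main obstacle to be the two conceptual steps rather than any computation. First, one must recognise that a point with a small orbit automatically spreads at most $d$ colours, which rests on the orbit-counting bound for the transitive $G$-set $F^A$ restricted to a finite-index subgroup. Second, one must pass from the limit point $\psi$ back to a genuine translate of $\chi$; this only works because the relevant ``$\le d$ colours on the fixed finite family $b\circ B^A$'' condition is decided by finitely many coordinates and therefore survives in the product-topology closure. Continuity of the action and compactness of $X$ are what make Lemma~\ref{lem:orb} applicable, but they are routine.
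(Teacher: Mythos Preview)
Your proposal is correct and follows essentially the same route as the paper: both encode the colouring as a point in the shift $G$-flow $[k]^{F^A}$, apply Lemma~\ref{lem:orb} to the orbit closure to obtain a point $\psi$ with $|G\cdot\psi|\le d$, and then use that $\psi$ lies in $\overline{G\cdot\chi}$ to approximate on the finite set $b\circ B^A$. The only cosmetic difference is in how the bound $|\psi(F^A)|\le d$ is extracted: you pass through the stabilizer $H$ and count $H$-orbits on the transitive $G$-set $F^A$ via double cosets, whereas the paper fixes $a_0\in F^A$ and observes directly that $\psi(a)=\psi(g\circ a_0)=(g^{-1}\cdot\psi)(a_0)\in\{\psi_\nu(a_0):\nu<d\}$ for $G\cdot\psi=\{\psi_\nu\}$; these are two phrasings of the same orbit--stabilizer computation.
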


\begin{proof} Assume that $|M(G)|\le d$, and let $B\in\Age(F),k\ge 2$ and $\chi_0:F^A\to [k]$ be a colouring. 
Note that $[k]^{F^A}$ is compact Hausdorff in the product topology with $[k]$ being discrete. 
The group $G$  acts continuously on $[k]^{F^A}$ by shift
$(g,\chi)\mapsto g\cdot \chi$, where $g\cdot\chi$ colours $a\in F^A$ by $\chi(g^{-1}\circ a)$. 
Consider the orbit closure $\overline{G\cdot \chi_0}$ of $\chi_0$. 
By Lemma~\ref{lem:orb}, 
the induced action of $G$ on $\overline{G\cdot \chi_0}$ has an orbit of size at most $d$, that is,
there exist $\chi_1\in \overline{G\cdot \chi_0}$ and $\psi_0,\ldots,\psi_{d-1}\in \overline{G\cdot \chi_0}$ such that
$G\cdot \chi_1=\{\psi_i\mid i<d\}$. 

Let $b\in F^B$. Observe that $b\circ B^A$ is a finite subset of $F^A$.
Since $\chi_1\in \overline{G\cdot \chi_0}$, there exists a 
$g\in G$ such that $g\cdot \chi_0$ and $\chi_1$ agree on $b\circ B^A$. Note that $g^{-1}\circ b\in F^{B}$, so we are left to show that $|\chi_0(g^{-1}\circ b\circ  B^A)|\le d$. We fix some $a_0\in F^A$, and claim that
for all $a\in g^{-1}\circ b\circ  B^A$ there exists a $\nu<d$ such that $\chi_0(a)=\psi_\nu(a_0)$. 
To see this, let $a\in g^{-1}\circ b\circ B^A\subseteq F^A$ and choose $h\in G$ 
such that $h\circ a_0=a$. Such an $h$ exists since $F$ is $A$-homogeneous. Then
$$
\chi_0(a)= (g\cdot\chi_0)(g\circ a)=\chi_1(g\circ a)= \chi_1((gh)\circ a_0) =((gh)^{-1}\cdot \chi_1)(a_0),
$$ 
where the second equality follows from  $g\circ a \in b\circ B^A$ and the choice of $g$.
As $(gh)^{-1}\cdot \chi_1\in G\cdot \chi_1$, and by choice of $\chi_1$, there exists $\nu<d$ 
such that $(gh)^{-1}\cdot \chi_1=\psi_\nu$. Thus 
$\chi_0(a)=\psi_\nu(a_0)$ as claimed.\end{proof}

\begin{proof}[Proof of Theorem~\ref{thm:char}.]
(1) $\Rightarrow$ (2). Write $F=\Flim(\K)$ and let $A\in\K=\Age(F)$. Then $F$ and $A$ satisfy the assumptions of Proposition~\ref{prop:amtoram}, so $F\hookrightarrow(B)^A_{k,d}$ for all $B\in\K$ and $k\ge 2$. Now apply Lemma~\ref{lem:infinite2}.

(2) $\Rightarrow$ (1). By Proposition~\ref{prop:ramtoam}.
\end{proof}

\subsection{Corollaries}\label{sec:cor}

\begin{corollary}\label{cor:asymptotic} Let $d\in\N$ and  $\K$ be a Fra\"iss\'e class.
The following are equivalent.
\begin{enumerate}\itemsep=0pt
 \item The class of structures with Ramsey degree for embeddings at most $d$ in $\K$ is cofinal in $\K$.
\item $\K$ has Ramsey degree for embeddings at most $d$.
\end{enumerate}
\end{corollary}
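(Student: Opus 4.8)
The plan is to read off both implications from results already in the excerpt, with the forward implication $(1)\Rightarrow(2)$ being the substantive one and the reverse implication being essentially a tautology.

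For $(2)\Rightarrow(1)$ I would simply unwind the definition of the Ramsey degree for embeddings of the class $\K$, which is the supremum over $A\in\K$ of the Ramsey degrees for embeddings of the individual structures $A$. If this supremum is at most $d$, then \emph{every} $A\in\K$ already has Ramsey degree for embeddings at most $d$, so the class of structures with this property is all of $\K$. Such a class is trivially cofinal in $\K$, since $A\le A$ for every $A$. No work is needed beyond this observation.

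For $(1)\Rightarrow(2)$, which is the heart of the matter, I would chain two earlier results. Assuming (1), the hypothesis of Proposition~\ref{prop:ramtoam} holds verbatim, so that proposition yields that $M(G_\K)$ has size at most $d$. Then Theorem~\ref{thm:char}, in its implication $(1)\Rightarrow(2)$, converts this bound on the flow back into the assertion that the Ramsey degree for embeddings of $\K$ is at most $d$, which is exactly (2).

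The conceptual content — and the only place where anything nontrivial happens — is hidden inside Proposition~\ref{prop:ramtoam}: its proof uses cofinality merely to locate, for any prescribed precision $\varepsilon$, a single finite substructure $A\subseteq F$ of Ramsey degree for embeddings at most $d$ that refines a given left-uniformly continuous function, and this already suffices to bound the size of an arbitrary orbit in every $G_\K$-flow. Thus $M(G_\K)$ is controlled by the cofinal part of $\K$ alone, while by Theorem~\ref{thm:char} the flow in turn controls the Ramsey degree of \emph{every} member of $\K$. This is precisely the asymptotic phenomenon the corollary records: a degree bound that holds only on large enough structures propagates to all of them. I do not expect a genuine obstacle here, since all the analytic and combinatorial work has already been carried out in Proposition~\ref{prop:ramtoam} and Theorem~\ref{thm:char}; the one thing to verify is that the hypothesis of Proposition~\ref{prop:ramtoam} is literally statement~(1).
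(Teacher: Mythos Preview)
Your proof is correct and matches the paper's approach: the paper also derives $(1)\Rightarrow(2)$ by first applying Proposition~\ref{prop:ramtoam} to get $|M(G_\K)|\le d$, then converting this back via Proposition~\ref{prop:amtoram} and Lemma~\ref{lem:infinite2} (which is exactly how Theorem~\ref{thm:char}$(1)\Rightarrow(2)$ is proved), while leaving the trivial direction $(2)\Rightarrow(1)$ unstated. The only cosmetic difference is that you cite Theorem~\ref{thm:char} directly whereas the paper re-cites its ingredients.
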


\begin{proof}
Assume (1). By Proposition~\ref{prop:ramtoam} we have $|M(G_\K)|\le d$. 
As $F:=\Flim(\K)$ is Fra\"iss\'e, Proposition~\ref{prop:amtoram} implies $F\hookrightarrow (B)^A_{k,d}$ for all
$A,B\in\K$. Then Lemma~\ref{lem:infinite2} implies~(2).
\end{proof}

It is noted in \cite[$\S 1$(D)]{kpt} that a separable metrizable group $G$ is extremely amenable, i.e.
$M(G)$ has size 1, if and only if every metrizable $G$-flow has a fixed point. In this context it might be of interest to note:

\begin{corollary}  Let $d\in\N$ and $\K$ be a Fra\"iss\'e class.
The following are equivalent.
\begin{enumerate}\itemsep=0pt
\item $M(G_\K)$ has size at most $d$.
\item Every continuous action of $G_\K$ on the Cantor space has an orbit of size at most $d$.
\end{enumerate}
\end{corollary}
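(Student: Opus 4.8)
The plan is to treat the two implications separately, with essentially all of the work concentrated in (2)$\Rightarrow$(1). For (1)$\Rightarrow$(2) there is nothing to do beyond invoking the machinery already in place: the Cantor space is compact Hausdorff, so any continuous $G_\K$-action on it is a $G_\K$-flow, and Lemma~\ref{lem:orb} (applied under the hypothesis $|M(G_\K)|\le d$) at once yields an orbit of size at most $d$.

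For (2)$\Rightarrow$(1) I would argue by contraposition, manufacturing an explicit ``bad'' Cantor-space flow out of a witness to a large Ramsey degree. Assume $|M(G_\K)|>d$. By Theorem~\ref{thm:char} the Ramsey degree for embeddings of $\K$ exceeds $d$, so by Lemma~\ref{lem:infinite2} there are $A,B\in\K$ and $k\ge2$ together with a colouring $\chi_0\colon F^A\to[k]$ (where $F=\Flim(\K)$) that is \emph{good for $F$}, meaning $|\chi_0(b\circ B^A)|>d$ for every $b\in F^B$. I would then consider the shift action of $G:=G_\K$ on $[k]^{F^A}$ and single out the set of good colourings
$$
G(F):=\{\chi\in[k]^{F^A}\mid |\chi(b\circ B^A)|>d\text{ for all }b\in F^B\}.
$$

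The core of the argument is that $G(F)$ is a nonempty closed $G$-invariant set in which \emph{every} orbit has size greater than $d$. Nonemptiness is witnessed by $\chi_0$; closedness holds because each condition $|\chi(b\circ B^A)|>d$ depends only on the finitely many coordinates in $b\circ B^A$ and is therefore clopen; and invariance is the short computation $(g\cdot\chi)(b\circ B^A)=\chi((g^{-1}\circ b)\circ B^A)$, using $g^{-1}\circ b\in F^B$. That good colourings have large orbits is exactly the contrapositive of the key estimate already appearing in Proposition~\ref{prop:amtoram}: fixing $a_0\in F^A$, if $G\cdot\chi=\{\psi_0,\dots,\psi_{d-1}\}$ had size at most $d$, then for any $b\in F^B$ and $a\in B^A$ the $A$-homogeneity of $F$ yields $h\in G$ with $h\circ a_0=b\circ a$, whence $\chi(b\circ a)=(h^{-1}\cdot\chi)(a_0)\in\{\psi_\nu(a_0)\mid\nu<d\}$ and so $|\chi(b\circ B^A)|\le d$, contradicting goodness. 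Thus $G(F)$ is a subflow all of whose orbits exceed $d$.

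The one genuine obstacle is that $G(F)$, being merely a closed subset of the Cantor space $[k]^{F^A}$, need not itself be homeomorphic to the Cantor space (it could be small or have isolated points), so one cannot yet apply hypothesis (2). I would remove this obstruction by padding with a trivial factor: let $G$ act on $G(F)\times 2^{\N}$ by $g\cdot(\chi,z):=(g\cdot\chi,z)$. This action is continuous, and since $G(F)\neq\emptyset$ and $2^{\N}$ is perfect, the product is a nonempty compact metrizable, totally disconnected, perfect space, hence homeomorphic to the Cantor space by Brouwer's topological characterization. Its orbits are precisely the sets $(G\cdot\chi)\times\{z\}$, each of cardinality $|G\cdot\chi|>d$. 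This Cantor-space flow therefore violates (2), completing the contraposition.
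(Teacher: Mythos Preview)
Your argument is correct. The implication (1)$\Rightarrow$(2) is handled exactly as in the paper, and your contrapositive treatment of (2)$\Rightarrow$(1) is sound: $G(F)$ is indeed a nonempty closed $G_\K$-invariant subset of $[k]^{F^A}$, your orbit-size estimate is precisely the core computation from the proof of Proposition~\ref{prop:amtoram} specialised to the case $\chi_1=\chi$, and the product $G(F)\times 2^{\N}$ with trivial action on the second factor is a legitimate Cantor-space flow (nonempty, compact, metrizable, zero-dimensional, perfect) whose orbits are exactly the sets $(G_\K\cdot\chi)\times\{z\}$.

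The paper takes a different, more direct route. It does not argue by contraposition and does not build a bad flow; instead it observes that in the proof of Proposition~\ref{prop:amtoram} the only use of the hypothesis $|M(G_\K)|\le d$ is, via Lemma~\ref{lem:orb}, to obtain a small orbit for the shift action on the colouring space, and that $[k]^{F^A}$ is itself a Cantor space to which (2) can be applied. One then concludes $F\hookrightarrow(B)^A_{k,d}$ for all $A,B,k$, hence Ramsey degree at most $d$ by Lemma~\ref{lem:infinite2}, hence (1) by Proposition~\ref{prop:ramtoam}. Your version invokes Theorem~\ref{thm:char} as a black box and instead isolates the invariant set $G(F)$ of bad colourings; the padding with $2^{\N}$ is a clean, explicit way to guarantee perfectness, whereas the paper sidesteps this by working in the ambient space $[k]^{F^A}$ from the start. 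The two arguments use the same shift-action machinery and the same orbit-size computation, but yours makes the ``why is it the Cantor space?'' step fully transparent at the cost of a slightly longer construction.
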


\begin{proof}
(1) implies (2)  by Lemma~\ref{lem:orb}. Conversely, assume (2).
Let $A\in\K$ be arbitary and write $F:=\Flim(\K)$. Then $F$ and $A$ satisfy the assumptions of
Proposition~\ref{prop:amtoram}. In the proof of this proposition 
we only require the following for $G_\K$: for all $k\ge 2$
and all $\chi_0\in[k]^{F^A}$, the shift action of $G_\K$ restricted to $\overline{G_\K\cdot\chi_0}$ has a small orbit.
But $[k]^{F^A}$ is homeomorphic to the Cantor space and the restricted shift is a continuous action on this space.
Thus (2) suffices to carry out this proof and we conclude that $F\hookrightarrow(B)^A_{k,d}$ for all $B\in\K=\Age(F)$. 
By Lemma~\ref{lem:infinite2} 
every $A\in\K$ has Ramsey degree for embeddings at most~$d$ in~$\K$. Then 
Proposition~\ref{prop:ramtoam} implies~(1).
\end{proof}

%
%

\section{Measure concentration}

We say that a probability measure is {\em concentrated on} any set of measure 1.  In this section we prove the following.

\begin{theorem}\label{thm:concentr}
Let $\K$ be a relational Fra\"iss\'e class which is Ramsey.
If $G_\K$ is amenable, then it is uniquely ergodic and the (unique) $G_\K$-invariant Borel 
probability measure on $M(G_\K)$ is concentrated on a (unique) dense $G_\delta$ orbit.
\end{theorem}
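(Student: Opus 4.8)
The plan is to realize $M(G_\K)$ concretely as a space of orders, to pin down the invariant measure explicitly as the \emph{uniform} measure, and then to show that a uniform random order almost surely turns $F=\Flim(\K)$ into a copy of the ordered Fra\"iss\'e limit. First I would fix a reasonable ordered Fra\"iss\'e expansion $\K^*$ of $\K$ in the language $L\cup\{<\}$ that is Ramsey and has the ordering property; such an expansion exists because $\K$, being Ramsey, has all Ramsey degrees equal to $1$, so the optimal order expansion realizes each structure by a single isomorphism type of admissible order. By Theorem~\ref{thm:minflow}(2) we then have $X_{\K^*}=\overline{G_\K\cdot {<^*}}=M(G_\K)$, a compact metrizable flow, and by \cite[P14.3]{akl} the generic orbit $O:=G_\K\cdot {<^*}$ is a dense $G_\delta$. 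Since $G_\K$ is amenable, Proposition~\ref{prop:randomorder} supplies a consistent random $\K^*$-admissible ordering $(R_A)_{A\in\K}$ together with a $G_\K$-invariant measure $\mu$ on $X_{\K^*}$ determined by $\mu(U(<))=\Pr[R_A={<}]$.

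I would then prove unique ergodicity by showing that $(R_A)_{A\in\K}$ is forced to be uniform. Applying consistency to an automorphism $\alpha\in\Aut(A)\subseteq A^A$ shows that $\alpha^{-1}(R_A)$ has the same distribution as $R_A$, so the law of $R_A$ is $\Aut(A)$-invariant. On the other hand, since $\K$ is Ramsey its Ramsey degree is $1$, so by the correspondence between Ramsey degrees and numbers of order expansions (recalled in the introduction) each $A$ has a unique admissible expansion up to isomorphism; equivalently, $\Aut(A)$ acts transitively on the finite set $\mathcal O_A$ of admissible orders on $A$. The only $\Aut(A)$-invariant probability measure on a transitive finite $\Aut(A)$-set is the uniform one, so $R_A$ is uniform, hence unique; by Proposition~\ref{prop:randomorder} this gives unique ergodicity, with $\mu(U(<))=1/|\mathcal O_A|$ for every admissible $<$ on $A$.

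It remains to show $\mu(O)=1$. Since $X_{\K^*}$ is minimal, every $R\in X_{\K^*}$ satisfies $\Age(F,R)=\K^*$, so $R\in O$ if and only if $(F,R)$ is homogeneous, i.e.\ has the one-point extension property: for every finite $A\subseteq F$ and every admissible one-point extension of $(A,R\!\upharpoonright\! A)$ in $\K^*$ there is a witness $v\in F$ realizing it over $A$. Each such requirement is an open condition, and $O$ is their countable intersection, so it suffices to prove that each requirement holds $\mu$-almost surely. Fix $A$, an admissible $<_A$, and a target one-point extension. The Ramsey property implies strong amalgamation (strengthening Lemma~\ref{lem:Ramam}) and hence trivial algebraicity in the relational limit $F$, so I may choose infinitely many witnesses $v_1,v_2,\dots$ realizing the correct $L$-type over $A$; by relationality each $A\cup\{v_i\}$ is literally that subset with its induced relations, so no auxiliary points are generated. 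By the explicit form of $\mu$, conditioned on $R\!\upharpoonright\! A={<_A}$ each $v_i$ occupies the prescribed position with the same positive probability $|\mathcal O_A|/|\mathcal O_{A\cup\{v_i\}}|$.

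The main obstacle is that these witness events are not independent, so Borel--Cantelli does not apply directly. The key is to arrange the $v_i$ as an $A$-indiscernible sequence admitting a shift automorphism in $\Aut(F)_{(A)}$; because $\mu$ is $\Aut(F)$-invariant, the sequence of success indicators is then stationary under a $\mu$-preserving transformation with positive marginal, and I would conclude by a recurrence/ergodicity argument that almost surely some witness succeeds. This is exactly where the relational hypothesis is essential: it guarantees that adding a free witness never forces new generated elements, so that admissible orders placing $v_i$ in each prescribed position genuinely exist and the limiting success frequency is non-degenerate rather than supported at $0$. Intersecting over the countably many extension requirements then gives $\mu(O)=1$. (That relationality cannot be dropped is precisely the content of Zucker's function-symbol counterexample on $V_{\infty,F}$ \cite{zucker}, so verifying this non-degeneracy is the real heart of the proof.)
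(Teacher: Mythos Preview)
Your setup and the unique-ergodicity argument match the paper's: produce a forgetful companion $\K^*$ (this is the content of Lemma~\ref{lem:forget}), identify $M(G_\K)$ with $X_{\K^*}$ via Theorem~\ref{thm:minflow}, and use transitivity of $\Aut(A)$ on admissible orders together with consistency to force each $R_A$ to be uniform.

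The concentration argument, however, has a real gap. You fix witnesses $v_1,v_2,\ldots$ in advance, posit a shift $T\in\Aut(F)_{(A)}$ making the success indicators stationary, and then appeal to ``recurrence/ergodicity'' to get almost-sure success. But stationarity with positive marginal does \emph{not} yield this: the constant stationary sequence $X_i\equiv X_0$ with $\Pr[X_0=1]=p\in(0,1)$ has $\Pr[\exists i:X_i=1]=p<1$. Poincar\'e recurrence only says that almost every point \emph{in} the success set returns to it, not that almost every point visits it. What you would actually need is ergodicity of $\mu$ for the single transformation $T$; unique ergodicity of the full $G_\K$-action does not give this, and you offer no argument for it. The correlations among the events ``$v_i$ succeeds'' are governed by the combinatorics of admissible orders on $A\cup\{v_1,\ldots,v_n\}$, which you never analyse. (Secondary unjustified steps: Ramsey $\Rightarrow$ strong amalgamation, and the existence of a single shift automorphism with infinite orbit through $v_1$.)

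The paper avoids all of this with an elementary, \emph{adaptive} shrinking-cover argument. Fixing one bad triple $(B,{<^B},a)$ with $B=\im(a)\cup\{p\}$, it builds covers $\mathcal U_n$ of the bad set $\mathcal B$ by basic sets $U({<'})$ with $|\dom({<'})|=|A|+n$ and proves
\[
\textstyle \mu\big(\bigcup\mathcal U_{n+1}\big)\ \le\ \frac{|A|+n}{|A|+n+1}\,\mu\big(\bigcup\mathcal U_n\big).
\]
For each $U({<'})\in\mathcal U_n$ one picks $g\in G_\K$ with $g({<^*})\upharpoonright\dom({<'})={<'}$, uses homogeneity of $(F,g({<^*}))$ to locate a fresh witness $p'\notin\dom({<'})$, and observes that at least one of the at most $|\dom({<'})|+1$ admissible extensions of ${<'}$ to $\dom({<'})\cup\{p'\}$ is disjoint from $\mathcal B$; uniformity then gives the displayed inequality. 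The witness $p'$ is chosen \emph{after} seeing ${<'}$, separately for each basic set in the cover---this is precisely what a fixed-in-advance sequence $v_1,v_2,\ldots$ cannot do and is how the correlation problem is sidestepped. The telescoping product $\prod_{m<n}\frac{|A|+m}{|A|+m+1}=\frac{|A|}{|A|+n}\to 0$ finishes the proof.
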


In Section~\ref{sec:forget} we construct a forgetful order expansion using the Ramsey property, in Section~\ref{sec:concentr} 
we prove Theorem~\ref{thm:concentr}, and the final Section~\ref{sec:cor2} contains some observations 
concerning the $\omega$-categorical case and a proof of (a stronger version of) Theorem~\ref{thm:mystery}.

\subsection{Forgetful order expansions}\label{sec:forget}

An ordered Fra\"iss\'e class $\K^*$ in the language $L\cup\{<\}$ is called {\em forgetful} if for all $A,B\in\K$ and $\K^*$-admissible 
orderings $<^A, <^B$ on $A,B$ respectively, $(A,<^A)\cong (B,<^B)$ whenever $A\cong B$; here $\K$ 
denotes the $L$-reduct of $\K^*$.

For example, the orderings of $B_\infty$ and $V_{\infty,F}$ mentioned in Example~\ref{exas} have forgetful
 ages (see~\cite[$\S6$]{kpt} for details). The following is easy to see (cf.~\cite[P5.6]{kpt}).

\begin{lemma}\label{lem:forgetramsey} Let $\K^*$ be a forgetful ordered Fra\"iss\'e class in the language $L\cup\{<\}$ and $\K$ 
its $L$-reduct. Then $\K^*$ has the ordering property, and  $\K^*$ is Ramsey if and only if so is $\K$.
\end{lemma}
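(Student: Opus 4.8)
The plan is to treat the two assertions separately, both resting on a single observation: under forgetfulness, copies of an $L$-structure inside a reduct correspond exactly to copies of its (essentially unique) order expansion inside the expanded structure. The ordering property is then almost immediate. Given $A\in\K$ I claim that $B:=A$ is a witness. Indeed, let $<^A$ and $<^B$ be arbitrary $\K^*$-admissible orders on $A$ and on $B=A$. Applying forgetfulness to $A\cong A$ gives $(A,<^A)\cong(A,<^B)$, and isomorphic structures embed into one another, so $(A,<^A)\le(A,<^B)$. As $<^A,<^B$ were arbitrary, this is exactly the ordering property.

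For the Ramsey equivalence, the key step is to set up, for every $C^*=(C,<^C)\in\K^*$ with $L$-reduct $C$, a bijection between ${C\choose A}$ and ${C^*\choose A^*}$ (and likewise for $B$), where $A^*=(A,<^A)$ and $B^*=(B,<^B)$ are fixed members of $\K^*$ with reducts $A,B$. I would define the map by $A'\mapsto(A',<^C\upharpoonright A')$. Well-definedness is where both hypotheses enter: heredity of the Fra\"iss\'e class $\K^*$ gives $(A',<^C\upharpoonright A')\in\K^*$, and then forgetfulness upgrades the $L$-isomorphism $A'\cong A$ to an ordered isomorphism $(A',<^C\upharpoonright A')\cong A^*$, so the image really lies in ${C^*\choose A^*}$. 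Injectivity is clear, and surjectivity holds because the order on any substructure of $C^*$ is the induced one. The same argument yields a bijection ${C\choose B}\to{C^*\choose B^*}$.

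Granting these bijections, a $k$-colouring of ${C^*\choose A^*}$ is the same datum as a $k$-colouring of ${C\choose A}$, and for $B'\in{C\choose B}$ corresponding to $B'^*\in{C^*\choose B^*}$ the sets ${B'\choose A}$ and ${B'^*\choose A^*}$ match under the bijection. Hence $C^*\to(B^*)^{A^*}_{k,1}$ holds if and only if $C\to(B)^A_{k,1}$. The equivalence then follows by passing reducts and expansions back and forth: if $\K$ is Ramsey, then for given $A^*,B^*,k$ I pick $C\in\K$ with $C\to(B)^A_{k,1}$, endow it with any admissible order (available since $C\in\K$), and read off $C^*\to(B^*)^{A^*}_{k,1}$; conversely, given $A,B\in\K$ I fix admissible orders, apply Ramseyness of $\K^*$ to obtain a suitable $C^*$, and deduce $C\to(B)^A_{k,1}$ for its reduct.

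The one place needing care is the well-definedness of the bijection, where heredity and forgetfulness are both genuinely used; everything else is bookkeeping, and no amalgamation or external Ramsey input is required beyond the hypothesis, which is why the statement is \emph{easy to see}.
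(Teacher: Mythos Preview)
Your proof is correct and follows the natural approach. The paper itself omits the proof, noting only that the lemma is ``easy to see'' with a reference to \cite[P5.6]{kpt}; your argument makes explicit the bijection ${C\choose A}\leftrightarrow{C^*\choose A^*}$ that underlies that remark, and your verification of the ordering property via $B:=A$ is exactly the intended one-line observation.
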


Before showing that the Ramsey property ensures the existence of reasonable forgetful expansions, we present a well-known technical lemma. Informally, this technical lemma guarantees a monochromatic copy of a given $B$ when copies of several different $A_i$ are coloured simultaneously.

\begin{lemma}\label{lem:ramseyref}
Let $\K$ be a Ramsey class. Let $n\in \mathbb{N}$, $k_0, \ldots, k_{n-1}\in\mathbb{N}$, $A_0, \ldots, A_{n-1} , B\in \K$. 
Then there exists a $C\in \K$ with the following property: for any family of
colourings $\chi_i: \binom{C}{A_i}\rightarrow [k_i]$, $i\in [n]$, there
 exists a $B'\in \binom{C}{B}$ such that 
$\chi_i\upharpoonright\binom{B'}{A_i}$ 
is constant for all $i\in [n]$.
\end{lemma}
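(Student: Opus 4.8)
The plan is to prove Lemma~\ref{lem:ramseyref} by induction on $n$, peeling off one colouring at a time and using the hypothesis that $\K$ is Ramsey (i.e.\ has Ramsey degree $1$) repeatedly. The base case $n=0$ is trivial: take $C:=B$, as there are no colourings to make constant and $B\in\binom{B}{B}$. For the inductive step, suppose the statement holds for $n-1$ and we are given $A_0,\ldots,A_{n-1},B\in\K$ together with bounds $k_0,\ldots,k_{n-1}$.

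First I would apply the induction hypothesis to the structures $A_0,\ldots,A_{n-2}$ and target $B$ (with the bounds $k_0,\ldots,k_{n-2}$) to obtain an intermediate structure $D\in\K$ such that any family of colourings of $\binom{D}{A_i}$ for $i<n-1$ admits a copy $B'\in\binom{D}{B}$ on which all of them are constant. Next I would handle the last coordinate $A_{n-1}$ separately: since $\K$ is Ramsey, its Ramsey degree for $A_{n-1}$ is $1$, so for the target $D$ and the colour bound $k_{n-1}$ there is some $C\in\K$ with $C\to(D)^{A_{n-1}}_{k_{n-1},1}$. This $C$ is the structure I claim works.

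To verify it, take any family $\chi_i\colon\binom{C}{A_i}\to[k_i]$ for $i\in[n]$. Applying $C\to(D)^{A_{n-1}}_{k_{n-1},1}$ to $\chi_{n-1}$ yields a copy $D'\in\binom{C}{D}$ on which $\chi_{n-1}\upharpoonright\binom{D'}{A_{n-1}}$ is constant. Now $D'\cong D$, and for each $i<n-1$ the colouring $\chi_i$ restricts to a colouring of $\binom{D'}{A_i}$ (using $\binom{D'}{A_i}\subseteq\binom{C}{A_i}$, as noted in the paper right after the definition of $\binom{B}{A}$); transporting these along an isomorphism $D'\cong D$ and invoking the defining property of $D$ gives a copy $B'\in\binom{D'}{B}\subseteq\binom{C}{B}$ on which every $\chi_i$ for $i<n-1$ is constant. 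Since $B'\in\binom{D'}{B}$ we have $\binom{B'}{A_{n-1}}\subseteq\binom{D'}{A_{n-1}}$, so $\chi_{n-1}$ is also constant on $\binom{B'}{A_{n-1}}$. Hence $\chi_i\upharpoonright\binom{B'}{A_i}$ is constant for every $i\in[n]$, as required.

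The only mild subtlety, and what I expect to be the main point to get right, is the bookkeeping in the inductive step: one must separate the single ``Ramsey'' reduction for $A_{n-1}$ from the inductive reduction for the remaining $A_i$, and then transfer the colourings $\chi_i$ ($i<n-1$) from the subcopy $D'\cong D$ back through the isomorphism so that the induction hypothesis applies. This uses only the monotonicity $\binom{B'}{A}\subseteq\binom{C}{A}$ whenever $B'\in\binom{C}{B}$, together with closure of $\K$ under isomorphism, so no genuinely new idea beyond iterating the Ramsey property is needed.
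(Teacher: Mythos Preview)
Your proof is correct and follows essentially the same approach as the paper: both iterate the Ramsey property, building a chain $B=C_0,\,C_1,\ldots,C_n=C$ with $C_i\to(C_{i-1})^{A_{i-1}}_{k_{i-1},1}$ and then peeling off monochromatic copies one colour at a time. Your induction on $n$ with the intermediate $D$ corresponds exactly to the paper's $C_{n-1}$, and the transport along $D'\cong D$ is implicit in the paper's choice of $C'_{n-1-j}\in\binom{C}{C_{n-1-j}}$.
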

\begin{proof}
Let $C_0:= B$, and for every $0<i\le n$ choose $C_i\in\K$ such that
$C_i\to (C_{i-1})_{k_{i-1},1}^{A_{i-1}}$. Let $C:= C_n$. Then by using the definition of the $C_i$ and a 
straightforward induction on $j\in [n]$ we obtain that there is a $C'_{n-1-j}\in \binom{C}{C_{n-1-j}}$ such that 
$\chi_{n-1-j}\upharpoonright \binom{C'_{n-1-j}}{A_{n-1-j}}$ 
is constant for all $i\in [n]\setminus [n-1-j]$. Setting $j= n-1$ yields $B'$ as in the statement. 
\end{proof}

\begin{lemma}\label{lem:forget}
Let $\K$ be a Fra\"{\i}ss\'{e} class in the language $L$. If $\K$ is Ramsey, then there exists a 
reasonable, forgetful ordered Fra\"{\i}ss\'{e} class $\K^*$ in the language $L\cup \{<\}$ with $L$-reduct $\K$.
\end{lemma}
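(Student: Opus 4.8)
The plan is to produce a linear order $<^*$ on the Fra\"iss\'e limit $F:=\Flim(\K)$ such that $(F,<^*)$ is ultrahomogeneous and $\Age(F,<^*)$ is forgetful, and then to set $\K^*:=\Age(F,<^*)$. Once such an order is found, $\K^*$ is a Fra\"iss\'e class because $(F,<^*)$ is homogeneous and locally finite (local finiteness being inherited from $F$); it is ordered and has $L$-reduct $\K$ by construction, since each $A\in\K$ receives at least the admissible order obtained by restricting $<^*$ along any embedding $A\le F$; and it is reasonable by Lemma~\ref{lem:reason}. Thus everything reduces to constructing $<^*$ with the two stated properties.

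The key mechanism is that Lemma~\ref{lem:ramseyref} supplies \emph{local forgetfulness}. Given a finite list $A_0,\dots,A_{n-1}\in\K$ and a target $B\in\K$, let $k_i$ be the number of ordered isomorphism types of $A_i$ and let $C\in\K$ be as provided by Lemma~\ref{lem:ramseyref} for these data. Fix any linear order $<^C$ on $C$ and colour each $A'\in\binom{C}{A_i}$ by the ordered isomorphism type of $(A',<^C\upharpoonright A')$; this is a colouring $\chi_i\colon\binom{C}{A_i}\to[k_i]$. The lemma yields $B'\in\binom{C}{B}$ on which every $\chi_i$ is constant, so in $(B',<^C\upharpoonright B')$ all copies of each $A_i$ induce pairwise isomorphic ordered structures. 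Restricting $<^C$ to $B'$ therefore orders $B$ with this local forgetfulness property for $A_0,\dots,A_{n-1}$.

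First I would fix an enumeration of $F$ and of the isomorphism types of $\K$, and build $<^*$ as the union of a coherent increasing chain of finite orders by a back-and-forth along two interleaved families of requirements. The \emph{extension} requirements are the usual ones forcing $(F,<^*)$ to be homogeneous and to have $L$-reduct age equal to $\K$; these I would discharge using amalgamation in $\K$ to realize the relevant one-point ordered extensions. The \emph{homogenization} requirements, indexed by the finite initial families $A_0,\dots,A_{n-1}$, I would discharge using local forgetfulness: at such a step I would enlarge the current finite ordered structure, placed inside $F$ by homogeneity of $F$, to one in which all copies of each $A_i$ ($i<n$) already carry a single ordered type. In the limit no copy of any $A_i$ can then induce an order non-isomorphic to another, so $\Age(F,<^*)$ is forgetful.

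The main obstacle is the apparent tension between the two families of requirements: homogeneity wants many ordered types realized, whereas forgetfulness demands exactly one ordered type per $L$-isomorphism type. The point that resolves it, and the reason the Ramsey hypothesis is essential, is that once forgetfulness is imposed the ordered type of a finite substructure becomes a function of its $L$-type, so the homogeneity requirements collapse to requirements about extending $L$-embeddings; local forgetfulness is precisely what guarantees that these two demands can be met simultaneously along the chain, i.e.\ that the order constraints accumulated at each stage never force two non-isomorphic orders on a single copy. I would therefore take particular care, at each homogenization step, to check that the enlargement supplied by Lemma~\ref{lem:ramseyref} is compatible with the coherent order already fixed, since this compatibility is exactly what makes the limiting order both homogeneous and forgetful. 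I note as a byproduct that, by Lemma~\ref{lem:forgetramsey}, the resulting $\K^*$ then automatically has the ordering property and is Ramsey, though only reasonableness and forgetfulness are claimed here.
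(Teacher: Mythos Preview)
Your back-and-forth construction has a genuine gap at exactly the point you flag but do not resolve. Once some finite stage $(D_n,<_n)$ contains two copies of a fixed $A\in\K$ carrying non-isomorphic induced orders, \emph{no} extension of $<_n$ can repair this: those two copies persist into the limit with their distinct ordered types, and the limit fails to be forgetful. So forgetfulness is not something you can ``homogenize toward'' along the chain; it must be maintained as an invariant from the start. But then every extension step (the ones meant to secure homogeneity of $(F,<^*)$) has to preserve this invariant, and you give no argument for why that is possible. Worse, the extension tasks themselves are ill-posed: homogeneity of $(F,<^*)$ requires realizing every one-point \emph{$\K^*$-admissible} ordered extension, yet $\K^*$ is what you are trying to build, so you do not know which ordered extensions to aim for. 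If you instead try to realize all ordered one-point extensions, you are building the free order expansion, which is certainly not forgetful. The phrase ``take particular care\ldots to check that the enlargement\ldots is compatible'' names the difficulty without discharging it; Lemma~\ref{lem:ramseyref} hands you some monochromatic $B'\in\binom{C}{B}$, but gives no control on whether the induced order on $B'$ extends the order already fixed on $D_n$.

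The paper sidesteps the whole issue by \emph{not} asking $(F,R)$ to be homogeneous. One shows that for each pair $(A,B)$ with $A\in\K$ and $B\subseteq F$ finite, the set of $R\in\textit{LO}$ that are order forgetful for $(A,B)$ is closed, and that any finite family of such sets has nonempty intersection (this is exactly your ``local forgetfulness'' observation, transported to $F$ via an automorphism). Compactness of $\textit{LO}$ then yields a single $R$ forgetful for all such pairs, hence $\K^*:=\Age(F,R)$ is forgetful. That $\K^*$ is a Fra\"iss\'e class is argued without touching homogeneity of $(F,R)$: heredity and joint embedding are immediate, $\K^*$ is Ramsey by Lemma~\ref{lem:forgetramsey}, and amalgamation then follows from Lemma~\ref{lem:Ramam}. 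Reasonableness is Lemma~\ref{lem:reason}. The moral is that you do not need to produce the Fra\"iss\'e limit of $\K^*$ by hand; you only need an order on $F$ whose age is forgetful, and the abstract Fra\"iss\'e machinery (via the Ramsey $\Rightarrow$ amalgamation lemma) does the rest.
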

\begin{proof} Let $F:=\Flim(\K)$ and consider the space {\em LO} of linear orders on $F$ (cf.~Example~\ref{exa:lo}). 
Let $(A,B)$ range over pairs with $A\in\K$ and $B\subseteq F$.
Call $R\in\textit{LO}$ order forgetful for $(A,B)$ if 
$(A',R\upharpoonright A')\cong(A'',R\upharpoonright A'')$ for all $A',A''\in{B\choose A}$. 

\medskip

\noindent{\em Claim.} If $n\ge 1$ and $(A_0,B_0), \ldots,(A_{n-1},B_{n-1})$ are pairs as above with all $B_i\subseteq F$ finite, 
then there exists $R\in\textit{LO}$ 
that is order forgetful for every $(A_i,B_i), i\in [n]$.
\medskip

\noindent{\em Proof of Claim:} Choose $B\subseteq F$ finite 
such that $\bigcup_{i\in [n]}B_i\subseteq B$. It suffices to find
an order which is order forgetful for every $(A_i, B), i\in [n]$. Fix an arbitrary order $R\in\textit{LO}$. 
For $i\in [n]$ let $\chi_i$ colour each  $A_i'\in{F \choose A_i}$ by the isomorphism 
type of $(A_i',R\upharpoonright A_i')$, and let $k_i\in\N$ be the number of colours 
of $\chi_i$. 
By Lemma~\ref{lem:ramseyref} and homogeneity of $F$ there exist $B'\subseteq F$ and $g\in \Aut(F)$ such that 
$g(B')=B$ and each $\chi_i$ is constant on $\binom{B'}{A_i}$. 
By definition of the $\chi_i$ this means that $R$ is order 
forgetful for $(A_i, B')$ for all $i\in [n]$. Hence, $g(R)$ is order forgetful for all $(A_i, B), i\in [n]$.
\hfill$\dashv$\medskip

For every $A\in\K$ and $B\subseteq F$ finite, the set of orders that are order forgetful for $(A,B)$ is 
closed in $\textit{LO}$. By the claim and compactness, there exists $R\in\textit{LO}$ which is order forgetful 
for all pairs $(A,B)$ such that $A\in\K$ and $B\subseteq F$ is finite.
Then $R$ is order forgetful for $(A,F)$ for every $A\in\K$. Equivalently, $\K^*:=\Age(F,R)$ is forgetful. 
To see that $\K^*$ is Fra\"iss\'e, observe that $\K^*$ is hereditary and has joint embedding. As $\K^*$ is 
Ramsey by Lemma~\ref{lem:forgetramsey}, it has amalgamation by Lemma~\ref{lem:Ramam} (and Remark~\ref{rem:rigiddegrees}; 
note that $\K^*$ is rigid because it is ordered). According to Lemma~\ref{lem:reason}, $\K^*$ is reasonable.
\end{proof}

\begin{examples}\label{exas:rationals}

The structures $F_1:=(\mathbb{Q}, \Betw), F_2:=(\mathbb{Q}, \Cycl), F_3:=(\mathbb{Q}, \Sep)$ and 
$F_4:=(\mathbb{Q}, =)$ are Ramsey (see \cite{cameron} for definitions). 
If $<$ is the rational order, then $\K_i^*:=\Age((F_i,<))$ is forgetful with reduct $\K_i:=\Age(F_i)$.
By Lemmas~\ref{lem:reason},~\ref{lem:forgetramsey} and Theorem~\ref{thm:minflow}, $M(G_{\K_i})$ is 
$\overline{G_{\K_i}\cdot <}$. Then $M(G_{\K_1})$
is the 2-element discrete space, Hence, by Theorem~\ref{thm:char}, 
$\K_1$ has Ramsey degree for embeddings 2. 
Theorem~\ref{thm:minflow} also allows to 
explicitly describe $M(G_{\K_i})$ for $i=2,3,4$ and these have the size of the continuum. 
 Hence, $\K_2,\K_3$ and $\K_4$ have infinite Ramsey degree for embeddings.
%
\end{examples}

\begin{examples}
Let $\K$ be a Fra\"iss\'e class of digraphs such that there is a directed cycle in~$\K$. 
Then there does not exist a forgetful ordered Fra\"iss\'e class with $L$-reduct $\K$:
by forgetfullness, every directed edge in any $A\in \K$ would be ordered in the same way and then
a directed cycle contradicts transitivity of the order. 
For example, this applies to the age of the universal homogeneous digraph, 
the random tournament and the local order (see~\cite{mcph}). 
\end{examples}

\subsection{Proof of Theorem~\ref{thm:concentr}}\label{sec:concentr}

Let $F:=\Flim(\K)$ and $L$ denote the relational language of~$\K$. Since $\K$ is assumed to be Ramsey, Lemma~\ref{lem:forget} applies and
there is a reasonable forgetful ordered Fra\"iss\'e class $\K^*$ in the language $L\cup\{<\}$
with $L$-reduct $\K$. By Lemma~\ref{lem:forgetramsey} and Theorem~\ref{thm:minflow}, 
 $\Flim(\K^*)=(F,<^*)$ for some order~$<^*$, and $X_{\K^*}=\overline{G_\K\cdot <^*}$ is the universal minimal flow of~$G_\K$.

Assume that $G_{\K}$ is amenable. In order to verify that $G_{\K}$ is uniquely ergodic, it 
suffices by Proposition~\ref{prop:randomorder} to show that for every consistent random ordering $(R_A)_{A\in\K}$ we
 have that each random variable $R_A$ is uniformly distributed. By forgetfulness, for any two $\K^*$-admissible
 orderings $<,<'$ on $A$ there is an $\alpha\in\Aut(A)$ such that $\alpha(<)=<'$, and 
then $\Pr[R_A=<']=\Pr[\alpha^{-1}\circ R_A=<]=\Pr[R_A=<]$ where the latter equality follows from
$(R_A)_{A}$ being consistent. 

Let $\mu$ denote the unique $G_\K$-invariant Borel probability measure  on $X_{\K^*}$. Recall 
the notation $U(<)$ from Proposition~\ref{prop:randomorder}. By this result, $U(<)$ and $U(<')$ 
have the same $\mu$-measure whenever $<$ and $<'$ are  $\K^*$-admissible orderings of the same finite subset of~$F$.

An order $R\in X_{\K^*}$ 
is outside $G_\K\cdot <^*$ if and only if $(F,R)\not\cong(F,<^*)$, if 
and only if $(F,R)$ is not homogeneous (cf.~Section~\ref{sec:fraisse}), if and only if there exist a finite $A\subseteq F$, some $(B,<^B)\in\K^*$ and
$a\in (B,<^B)^{(A,<^*\upharpoonright A)}$ such that $R$ is {\em bad for} $(B,<^B,a)$, meaning that 
there is no $b\in (F,R)^{(B,<^B)}$ with $b\circ a= \id_A$. 
As the language of $F$ is relational, we may assume that $B=\im(a)\cup\{p\}$ with $p\in F\setminus \im(a)$. 

Observe that the set of orders $R\in X_{\K^*}$ which are bad for $(B,<^B,a)$
is closed. Hence, $X_{\K^*}\setminus G_\K\cdot <^*$ is  $F_\sigma$, so 
$G_\K\cdot <^*$ is a dense $G_\delta$ orbit in $X_{\K^*}$ (see also \cite[14.3]{akl}). Since $X_{\K^*}$ is a Baire space, $G_{\K}\cdot <^*$ is clearly unique with this property. We prove that $\mu(G_\K\cdot <^*)=1$.
It suffices to show that for each $(B,<^B,a)$ with $B=\im(a)\dot\cup\{p\}$ as above, the set
$\mathcal B:=\{R\in X_{\K^*}\mid  R \text{ is bad for }(B,<^B,a)\}$ has $\mu$-measure 0.

We construct a sequence $(\mathcal U_n)_{n\in\mathbb N}$ such that for all $n\in\mathbb N$
\begin{enumerate}
\item[(a)] $\mathcal U_n$ is a cover of $\mathcal B$, i.e.\ $\mathcal B\subseteq \bigcup\mathcal U_n$;
\item[(b)] every $U\in\mathcal U_n$ equals some $U(<')$ such that $<'\supseteq <^*\upharpoonright A$ is a $\K^*$-admissible order with 
$|\dom(<')|=|A|+n$; 
\item[(c)] $\textstyle\mu(\bigcup \mathcal U_{n+1})\leq \frac{|A|+n}{|A|+n+1}\cdot\mu(\bigcup\mathcal U_n).$
\end{enumerate}
Here, $\dom(<')$ is the set linearly orderd by $<'$; note that (b) implies $\dom(<')\supseteq A$.

This finishes the proof: by (a) and (c) we have for all $n\in\mathbb N$
$$\textstyle
\mu(\mathcal B)\le \mu(\bigcup\mathcal U_n)\le 
\prod_{m<n}\frac{|A|+m}{|A|+m+1}\cdot \mu(\bigcup\mathcal U_0)=\mu(\bigcup\mathcal U_0)\cdot \frac{|A|}{|A|+n}\to_n 0.
$$

 Set $\mathcal U_0:=\{U(<^*\upharpoonright A)\}$ and assume that 
$\mathcal U_n$ is already defined. It suffices to find for every $U(<')\in\mathcal U_n$
some $p'\notin\dom(<')$ and a family $(<_i)_{i\in I}$ such that
\begin{enumerate}
\item[(a')] $\bigcup_{i\in I}U(<_i)\cap\mathcal B=U(<')\cap\mathcal B$;
\item[(b')]  for every $i\in I$, $<_i\supseteq <'$  is a $\K^*$-admissible order with $\dom(<_i)=\dom(<')\cup\{p'\}$;
\item[(c')] $\mu(\bigcup_{i\in I}U(<_i))\le\frac{|A|+n}{|A|+n+1}\cdot\mu(U(<'))$.
\end{enumerate}

Write $A':=\dom(<')$, and choose $R\in \mathcal B\cap U(<')$. Since 
$R\in \overline{G_\K\cdot <^*}$ there is a $g\in G_\K$ such that 
\begin{equation}\label{eq:Rg}\textstyle
 g(<^*)\upharpoonright A'= R\upharpoonright A'=<'.
\end{equation}
In particular, 
$g(<^*)\upharpoonright A=<'\upharpoonright A=<^*\upharpoonright A$ and
$(A,<^*\upharpoonright A)$ is a substructure of $(F,g(<^*))$. Since
$(F,g(<^*))$ is isomorphic to $(F,<^*)$, it is homogeneous, so
there exists an embedding $b\in (F,g(<^*))^{(B,<^B)}$ with $b\circ a=\id_A$. 
We set $p':=b(p)$ and claim that $p'\notin A'$. Otherwise, $\im(b)\subseteq A'$, so 
$b\in (F,R)^{(B,<^B)}$ by~\eqref{eq:Rg}, and this contradicts $R$ being bad for $(B, <^*, a)$.

Let $<_0,\ldots,<_{s-1}$ list the $\K^*$-admissible orders on $A'\cup\{p'\}$ 
extending $<'$, and note that $s\le |A'|+1$. Let $I\subseteq[s]$ consist of those $i<s$ such that 
$U(<_i)\cap\mathcal B\neq\emptyset$. Then (a') and (b') follow, and
we are left to verify (c'). The sets $U(<_i),i<s,$ partition $U(<')$ and, as already noted, 
have pairwise equal $\mu$-probability, so $\mu(U(<_i))=\mu(U(<'))/s$. Thus
\begin{equation}\label{eq:I}\textstyle
\mu(\bigcup_{i\in I}U(<_i))=|I|/s\cdot\mu(U(<')). 
\end{equation}

There exists $i_0<s$ such that $<_{i_0}=g(<^*)\upharpoonright (A'\cup\{p'\})$. Since 
$b\in (F,g(<^*))^{(B,<^B)}$ has  $\im(b)\subseteq A'\cup\{p'\}$, 
we have that $b\in (F,S)^{(B,<^B)}$ for every $S\in U(<_{i_0})$. Hence, no such $S$ is bad for $(B,<^B,a)$, that is, 
$U(<_{i_0})\cap\mathcal B=\emptyset$, so $i_0\notin I$. Thus 
$|I|<s$. Since $|A'|=|A|+n$, we have $s\le |A|+n+1$, so $|I|/s\le (s-1)/s\le (|A|+n)/(|A|+n+1)$. Hence, (c') follows from~\eqref{eq:I}.

\subsection{The $\omega$-categorical case}\label{sec:cor2}
Of particular interest are Fra\"iss\'e classes
$\K$ which have an $\omega$-categorical Fra\"iss\'e 
limit $F:=\Flim(\K)$. By the theorem of Ryll-Nardzewski (see e.g.\ \cite[T4.3.1]{zieglerbuch}) this happens e.g.\ if 
the language $L$ of $\K$ is finite and relational (cf.~\cite[T4.4.7]{zieglerbuch}), and 
is equivalent to $G_\K$ being {\em oligomorphic}: 
for every $n\in\mathbb N$, $G_\K$ has only finitely 
many $n$-orbits. An {\em $n$-orbit of $G_\K$} is an orbit of the {\em diagonal action} of $G_\K$ on $F^n$ given by 
$g\cdot \bar a=g\cdot (a_0,\ldots, a_{n-1}):= g(\bar a)=(g(a_0),\ldots, g(a_{n-1}))$.

\begin{lemma}\label{lem:forgetoligo}
 Let $\K^*$ be a reasonable ordered Fra\"iss\'e class in the 
language $L\cup\{<\}$ with $L$-reduct $\K$. Then $G_{\K^*}$ is oligomorphic if and only if so is $G_\K$.
\end{lemma}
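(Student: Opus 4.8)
The plan is to reduce both implications to counting orbits of finite tuples. By Theorem~\ref{thm:minflow}(1), since $\K^*$ is reasonable, $\K$ is Fra\"iss\'e, $F:=\Flim(\K)$ is the $L$-reduct of $\Flim(\K^*)=(F,<^*)$, and hence $G_\K=\Aut(F)$ while $G_{\K^*}=\Aut(F,<^*)$. Every automorphism of $(F,<^*)$ preserves the $L$-structure, so $G_{\K^*}$ is a (closed) subgroup of $G_\K$, both acting diagonally on $F^n$. Recall that oligomorphy of $\Aut(M)$ means finitely many orbits on $M^n$ for each $n$; since $F$ and $(F,<^*)$ are locally finite and homogeneous, by homogeneity the orbit of a tuple $\bar a\in F^n$ is determined by the isomorphism type of the finitely generated substructure $\langle\bar a\rangle$ together with its distinguished generators $\bar a$.

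For the implication that $G_{\K^*}$ oligomorphic implies $G_\K$ oligomorphic, nothing more is needed: since $G_{\K^*}$ is a subgroup of $G_\K$, every $G_\K$-orbit on $F^n$ is a union of $G_{\K^*}$-orbits, so the number of $G_\K$-orbits on $F^n$ is at most the number of $G_{\K^*}$-orbits, which is finite by hypothesis.

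For the converse I would bound how far a single $G_\K$-orbit can split when passing to the finer $G_{\K^*}$-orbits. The key observation is that, as $<$ is a relation symbol, adjoining it does not enlarge generated substructures: for $\bar a\in F^n$ the universe of $\langle\bar a\rangle$ computed in $F$ equals the one computed in $(F,<^*)$, and the latter structure is just the former equipped with the restriction of $<^*$. Consequently the $G_{\K^*}$-orbit of $\bar a$ is determined by the $L$-isomorphism type of $(\langle\bar a\rangle,\bar a)$ -- that is, by its $G_\K$-orbit -- together with one of the linear orders that $<^*$ may induce on the universe of $\langle\bar a\rangle$. If $m$ denotes the size of $\langle\bar a\rangle$, which is constant along a $G_\K$-orbit, there are at most $m!$ such orders; hence each $G_\K$-orbit on $F^n$ splits into at most finitely many $G_{\K^*}$-orbits. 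Summing over the finitely many $G_\K$-orbits (assuming $G_\K$ oligomorphic) yields finitely many $G_{\K^*}$-orbits on $F^n$, as required.

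The one point needing care -- the only genuine obstacle beyond the subgroup triviality -- is that in a language with function symbols the substructure $\langle\bar a\rangle$ need not have size bounded by $n$, so there is no uniform bound $n!$ on the splitting. The resolution is exactly that the split bound $m!$ is merely finite for each orbit rather than uniform, which is all that finiteness of the total count requires; I would therefore phrase the estimate as a bound per $G_\K$-orbit rather than a global one, to make this explicit.
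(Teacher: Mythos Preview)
Your proposal is correct and follows essentially the same approach as the paper: both use the subgroup inclusion $G_{\K^*}\le G_\K$ for the trivial direction, and for the converse bound the number of $G_{\K^*}$-orbits inside a given $G_\K$-orbit by the (finite) number of linear orders on the finitely generated substructure $\langle\bar a\rangle$, invoking homogeneity of $(F,<^*)$. The only cosmetic difference is that the paper first reduces to tuples with pairwise distinct entries and speaks of $\K^*$-admissible orders, whereas you work directly with arbitrary tuples and the cruder bound $m!$; the content is the same.
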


\begin{proof}
Let $F=\Flim(\K)$. By Theorem~\ref{thm:minflow} we have that $\Flim(\K^*)=(F, <^*)$ for 
some order~$<^*$ on $F$. As $G_{\K^*}$ is a subgroup of $G_\K$, it suffices to show that
 every orbit $T\subseteq F^n$ of~$G_\K$ that consists of tuples with all different entries 
is the union of finitely many $n$-orbits of~$G_{\K^*}$. Let $\bar s=(s_1, \ldots, s_n)$ and $\bar t=(t_1, \ldots, t_n)$
 be tuples in $T$ such that the unique extension of the partial isomorphism $s_1\mapsto t_1, \ldots, s_n\mapsto t_n$ to 
the substructures in~$F$ generated by $\bar s$ and $\bar t$ is a partial isomorphism of $F^*$. Then by homogeneity of $F^*$
 we have that $\bar s$ and $\bar t$ are in the same $n$-orbit of $G_{\K^*}$. As there are finitely many ways to define
 a ($\K^*$-admissible) order on the structure generated by a tuple in $T$, the claim follows.
\end{proof}

\begin{lemma}\label{lem:normalfinite} 
Let $\K^*$ be a reasonable ordered Fra\"iss\'e class in the 
language $L\cup\{<\}$ with $L$-reduct $\K$. Assume that $G_{\K^*}$ is oligomorphic.
If $G_{\K^*}$ is normal in~$G_\K$, then it has finite index in $G_\K$.
\end{lemma}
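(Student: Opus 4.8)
The plan is to realize the index $[G_\K:G_{\K^*}]$ as the cardinality of an orbit of orders, and then to bound that orbit using normality together with oligomorphicity. By Theorem~\ref{thm:minflow}, $\Flim(\K^*)=(F,<^*)$ for $F:=\Flim(\K)$ and some order $<^*$, and $G_{\K^*}=\Aut(F,<^*)$ is exactly the stabilizer of $<^*$ for the logic action of $G_\K$ on orders. The orbit-stabilizer correspondence then yields a bijection between the coset space $G_\K/G_{\K^*}$ and the orbit $G_\K\cdot<^*$, so that $[G_\K:G_{\K^*}]=|G_\K\cdot<^*|$; it thus suffices to prove that this orbit is finite.

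First I would record the effect of normality. For $g\in G_\K$ a direct computation gives $gG_{\K^*}g^{-1}=\Aut(F,g(<^*))$, so the assumption that $G_{\K^*}$ is normal says precisely that $\Aut(F,g(<^*))=\Aut(F,<^*)=G_{\K^*}$ for every $g$. In other words, the single group $G_{\K^*}$ preserves \emph{every} order $R$ in the orbit $G_\K\cdot<^*$, not merely $<^*$ itself.

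The key step is to encode each such $R$ as finite data. Given $R\in G_\K\cdot<^*$, let $D_R\subseteq F^2$ be the set of ordered pairs $(a,b)$ with $a<^* b$ but $b\mathrel{R}a$, i.e.\ the pairs on which $R$ reverses $<^*$. Since $R$ is obtained from the fixed order $<^*$ by flipping exactly the pairs recorded in $D_R$, the set $D_R$ determines $R$. Because $G_{\K^*}$ preserves both $<^*$ and $R$, the set $D_R$ is invariant under the diagonal action of $G_{\K^*}$ on $F^2$. As $G_{\K^*}$ is oligomorphic it has only finitely many $2$-orbits, say $m$ of them, and hence at most $2^m$ invariant subsets of $F^2$. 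Consequently there are at most $2^m$ possibilities for $D_R$, so at most $2^m$ distinct orders in $G_\K\cdot<^*$. The orbit is therefore finite, and $G_{\K^*}$ has finite index in $G_\K$.

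I expect the only genuinely delicate point to be the recognition that normality is exactly what upgrades ``$G_{\K^*}$ stabilizes $<^*$'' to ``$G_{\K^*}$ stabilizes every order in its $G_\K$-orbit''; this is what turns each $D_R$ into a $G_{\K^*}$-invariant set, and hence into finite data once oligomorphicity is invoked. Without normality the orbit $G_\K\cdot<^*$ is generally infinite --- in the vector space example it is a dense $G_\delta$ inside a flow of size continuum --- so this conjugation-invariance is the crucial leverage.
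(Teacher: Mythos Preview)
Your proof is correct and follows essentially the same route as the paper: reduce to finiteness of the orbit $G_\K\cdot<^*$ via orbit--stabilizer, use normality to see that $G_{\K^*}$ fixes every order in that orbit, and then invoke oligomorphicity to bound the number of $G_{\K^*}$-invariant binary relations. The only cosmetic difference is that the paper observes directly that each $R\in G_\K\cdot<^*$ is itself a union of $2$-orbits of $G_{\K^*}$, whereas you pass through the difference set $D_R$; both encodings give the same $2^m$ bound.
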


\begin{proof}
By reasonability $\Flim(\K^*)=(\Flim(\K),<^*)$ for some order $<^*$.
Consider the logic action of $G_{\K}$ on $\textit{LO}$ (Example~\ref{exa:lo}). Then $G_{\K^*}$ is the stabilizer of $<^*$.
Hence, $|G_{\K}:G_{\K^*}|=|G_\K\cdot <^*|$ and it suffices to show that $G_\K\cdot <^*$ is finite. 
If $G_{\K^*}$ is normal, then it fixes every $R\in G_\K\cdot <^*$. Thus every such $R$ is a union of 2-orbits.
As $G_{\K^*}$ is oligomorphic, there are only finitely many such $R$. 
\end{proof}

We use the following mode of speech from~\cite{akl}: let $\K$ be a Fra\"iss\'e class in the language~$L$; 
a {\em companion of $\K$} is a reasonable ordered Fra\"iss\'e class $\K^*$
in the language $L\cup\{<\}$ which is Ramsey, has the ordering property and has $L$-reduct $\K$.
Note:

\begin{proposition}\label{prop:ramseycompanion}
If a Fra\"iss\'e class is Ramsey, then it has a companion.
\end{proposition}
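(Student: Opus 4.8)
The plan is to observe that the statement is essentially a repackaging of two results already established, namely Lemma~\ref{lem:forget} and Lemma~\ref{lem:forgetramsey}. Unwinding the definition, a companion of $\K$ is any reasonable ordered Fra\"iss\'e class $\K^*$ in $L\cup\{<\}$ with $L$-reduct $\K$ that is \emph{additionally} Ramsey and has the ordering property. So the task reduces to producing such a $\K^*$ and checking those two extra properties, both of which come for free once the expansion is forgetful.

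First I would apply Lemma~\ref{lem:forget} to the given Ramsey Fra\"iss\'e class $\K$. This yields a reasonable, forgetful ordered Fra\"iss\'e class $\K^*$ in the language $L\cup\{<\}$ whose $L$-reduct is exactly $\K$. At this point $\K^*$ already satisfies three of the five required conditions: it is reasonable, it is an ordered Fra\"iss\'e class, and its $L$-reduct is $\K$.

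Next I would invoke Lemma~\ref{lem:forgetramsey} for the class $\K^*$. Because $\K^*$ is forgetful, that lemma immediately gives that $\K^*$ has the ordering property. The same lemma asserts that $\K^*$ is Ramsey if and only if $\K$ is Ramsey; since $\K$ is Ramsey by hypothesis, we conclude that $\K^*$ is Ramsey as well. Collecting everything, $\K^*$ is a reasonable ordered Fra\"iss\'e class that is Ramsey, has the ordering property, and has $L$-reduct $\K$, i.e.\ a companion of $\K$, which completes the proof.

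I do not expect a genuine obstacle at this step: all of the real work lies in Lemma~\ref{lem:forget}, whose construction of the order-forgetful $R\in\textit{LO}$ uses the Ramsey property (through Lemma~\ref{lem:ramseyref}) and recovers amalgamation for $\K^*$ via Lemma~\ref{lem:Ramam}. The present proposition is therefore just a short assembly of those facts, and the only thing to be careful about is verifying that each of the five clauses in the definition of \emph{companion} is indeed supplied — four by Lemma~\ref{lem:forget} and the Ramsey clause by Lemma~\ref{lem:forgetramsey}.
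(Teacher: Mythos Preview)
Your proposal is correct and matches the paper's own proof, which simply cites Lemmas~\ref{lem:forget} and~\ref{lem:forgetramsey}. One tiny bookkeeping slip in your closing sentence: the ordering property is also supplied by Lemma~\ref{lem:forgetramsey} (as you correctly stated earlier), so the split is three clauses from Lemma~\ref{lem:forget} and two from Lemma~\ref{lem:forgetramsey}, not four and one.
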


\begin{proof}
By Lemmas~\ref{lem:forget} and~\ref{lem:forgetramsey}. 
\end{proof}

\begin{proposition}\label{prop:boolean}
%
Let $\K$ be a relational Fra\"iss\'e class that has a companion.
If $M(G_{\K})$ is finite, then $|M(G_{\K})|$ is a power of 2.
\end{proposition}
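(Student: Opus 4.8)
The plan is to reduce the statement to a purely group-theoretic claim about $G_\K$, and then to exhibit a Boolean (i.e.\ $\mathbb{F}_2$-affine) structure on the universal minimal flow. Since $\K$ has a companion $\K^*$, Theorem~\ref{thm:minflow} identifies $M(G_\K)$ with $X_{\K^*}=\overline{G_\K\cdot <^*}$, where $\Flim(\K^*)=(\Flim(\K),<^*)$. Write $F:=\Flim(\K)$, $G:=G_\K$ and $N:=G_{\K^*}=\Aut(F,<^*)$, the stabilizer of $<^*$ in the logic action. As $\K^*$ is an ordered Ramsey Fra\"iss\'e class, $N$ is extremely amenable by Theorem~\ref{thm:extremeamenability}, and it is closed of finite index $d:=|M(G_\K)|$ in $G$. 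Lemma~\ref{lem:todorpart} then shows $N$ is normal and clopen and that $M(G_\K)$ is $G$-isomorphic to $G/N$ with the left-multiplication action; in particular $Q:=G/N$ has order $d$ and acts regularly on $X_{\K^*}$. Thus it suffices to prove that $Q$ is a $2$-group.

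Next I would unfold what the orders in $X_{\K^*}$ look like. Since $N$ is normal, the stabilizer of every $R\in X_{\K^*}=G\cdot <^*$ equals $N$, so each such $R$ is $N$-invariant and hence, as a set of pairs, is a union of $2$-orbits of $N$. For $R,R'\in X_{\K^*}$ define the \emph{disagreement set} $D(R,R')$ to be the set of unordered pairs $\{x,y\}\subseteq F$ on which $R$ and $R'$ disagree. Each $D(R,R')$ is $N$-invariant, and one has $D(R,R')\ \triangle\ D(R',R'')=D(R,R'')$ together with $D(R,R')=\emptyset$ if and only if $R=R'$. Fixing the base point $<^*$, the assignment $R\mapsto D(R,<^*)$ is therefore an injection of $X_{\K^*}$ into the Boolean group $(\mathcal{B},\triangle)$ of $N$-invariant sets of unordered pairs, sending $<^*$ to $\emptyset$.

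The heart of the proof is to show that the image of this injection is a \emph{subgroup} of $(\mathcal{B},\triangle)$; since a finite subgroup of a Boolean group is elementary abelian, this forces $|X_{\K^*}|=d$ to be a power of $2$ and finishes the argument. Concretely, given $R_1,R_2\in X_{\K^*}$ I would form the relation $R_3$ obtained from $<^*$ by reversing its orientation on exactly those unordered pairs lying in $D(R_1,<^*)\ \triangle\ D(R_2,<^*)$, so that formally $D(R_3,<^*)=D(R_1,<^*)\ \triangle\ D(R_2,<^*)$, and then prove that $R_3$ is a genuine linear order lying in $X_{\K^*}$. Here is where relationality and the Ramsey property enter: to check admissibility it suffices to control every finite restriction $R_3\upharpoonright A$, and since the language is relational one may reduce to one-point extensions $B=\im(a)\,\dot\cup\,\{p\}$ exactly as in the proof of Theorem~\ref{thm:concentr}, using amalgamation and Ramsey (via Lemma~\ref{lem:ramseyref}) to realize the required extension inside the homogeneous structure $F$.

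I expect this last step to be the main obstacle, and it is emphatically \emph{not} a formal matter. For a single triple $\{x,y,z\}$ the coordinatewise symmetric difference of three linear orders can be a $3$-cycle, so transitivity of $R_3$ must be \emph{derived} from the special structure of admissible orders in a relational Ramsey class rather than taken for granted. The crux is thus to prove that any two admissible orders on a finite $A\in\K$ differ only on mutually compatible "reversal blocks"---equivalently, that $\Aut(A)$ acts on the admissible orders of $A$ as an elementary abelian $2$-group---and that this local compatibility propagates to $F$. The relational hypothesis is essential here, while the Ramsey and ordering-property hypotheses (packaged in the companion) are what rule out classes such as $\Age(\mathbb{Q},E_3,<)$, whose flow has the non-power-of-two size $3!$; hence the argument cannot avoid a genuine use of the companion.
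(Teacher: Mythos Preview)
Your reduction in the first paragraph is correct and matches the paper exactly: from the companion one gets $M(G_\K)=\overline{G_\K\cdot<^*}=G_\K\cdot<^*$, the stabilizer $N=G_{\K^*}$ is extremely amenable of finite index, and by Lemma~\ref{lem:todorpart} it is normal with $|M(G_\K)|=|G_\K/N|$. So the task is indeed to show that $Q:=G_\K/N$ is an elementary abelian $2$-group.

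From here your route diverges from the paper's, and the divergence is where the gap lies. The paper does \emph{not} try to realize $X_{\K^*}$ inside a Boolean group and prove closure under $\triangle$; it proves directly that $g^2\in N$ for every $g\in G_\K$. The argument is short and uses relationality in one clean stroke. Call a $2$-orbit $S$ of $N$ \emph{black} if $a<^*b$ for all $(a,b)\in S$ and \emph{white} if $b<^*a$. Normality of $N$ implies that each $g\in G_\K$ permutes the $2$-orbits of $N$. The key lemma is: if $S$ is black or white and $g(S)$ has the same colour as $S$, then $g(S)=S$. Indeed, for $(a,b)\in S$ the map $g\upharpoonright\{a,b\}$ preserves all $L$-relations (as $g\in\Aut(F)$) and, by the colour hypothesis, also preserves $<^*$; since $L$ is relational, $\{a,b\}$ is already a substructure, so $g\upharpoonright\{a,b\}$ is a partial isomorphism of $(F,<^*)$ and extends to some $h\in N$ by homogeneity, giving $g\cdot(a,b)=h\cdot(a,b)\in S$. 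From this lemma one gets $g^2\in N$ immediately: if some black/white $S$ had $g^2(S)$ of the opposite colour, then $g(S)$ shares its colour with $S$ or with $g^2(S)$; either case forces (via the lemma) $g(S)=S$ or $g(S)=g^2(S)$, and both yield $S=g^2(S)$, a contradiction.

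Your proposed completion, by contrast, leaves precisely the decisive step open. Showing that the image of $R\mapsto D(R,<^*)$ is closed under $\triangle$ requires, for arbitrary $R_1,R_2\in X_{\K^*}$, producing some $g_3\in G_\K$ with $D(g_3(<^*),<^*)=D(R_1,<^*)\,\triangle\,D(R_2,<^*)$; this is not a formal consequence of normality and does not obviously follow even once one knows $Q$ has exponent $2$, so it is at least as hard as the paper's route, not an alternative to it. Your sketch for this step---invoking one-point extensions and Lemma~\ref{lem:ramseyref} ``as in the proof of Theorem~\ref{thm:concentr}''---transplants tools from the measure-concentration argument that have no evident bearing on the transitivity of $R_3$ or on locating it in the orbit $G_\K\cdot<^*$. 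And the reformulation ``$\Aut(A)$ acts on the admissible orders of $A$ as an elementary abelian $2$-group'' is just the local form of the conclusion, so it does not advance the argument. The missing idea is the same-colour lemma above; once you have it, the detour through disagreement sets is unnecessary.
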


\begin{proof}
Let $L$ denote the relational language of $\K$ and let $\K^*$ be a companion of $\K$.
By Theorem~\ref{thm:minflow} we have that $F^*:=\Flim(\K^*)=(F,<^*)$ for $F:=\Flim(\K)$,  and that
$M(G_\K)$ is $\overline{G_\K\cdot <^*}$. Assume that $M(G_\K)$ is finite.
Then $G_\K\cdot <^*$ is finite, and since $G_{\K^*}$ is the stabilizer of $<^*$ in 
the logic action of $G_\K$ on $\textit{LO}$, 
$G_{\K^*}$ has finite index in $G_\K$. 
By Theorem~\ref{thm:extremeamenability}, $G_{\K^*}$ is extremely amenable. 
By Lemma~\ref{lem:todorpart}, $G_{\K^*}$ is normal in $G_\K$ and $|M(G_\K)|=|G_\K:G_{\K^*}|$.

Consider the diagonal actions of $G_\K$ and $G_{\K^*}$ on $F^2$. We claim that for every $g\in G_\K$ and 
every 2-orbit $S$ of $G_{\K^*}$ the set $g\cdot S\subseteq F^2$ is also a 2-orbit of $G_{\K^*}$. 
Indeed, normality implies that two pairs in the same 2-orbit of $G_{\K^*}$
are mapped by $g$ to two pairs which are also in the same 2-orbit of $G_{\K^*}$, so there exists 
 a 2-orbit $T$ with $g\cdot S\subseteq T$. 
Reasoning analoguously for $g^{-1}$ and $T$ we obtain $g^{-1}\cdot T\subseteq S$, so $g\cdot S=T$.

Call a  2-orbit $S$ of $G_{\K^*}$ {\em black} if $a<^*b$ for all $(a,b)\in S$, and {\em white} if 
$b<^*a$ for all $(a,b)\in S$; orbits which are neither black nor white contain only pairs $(a,b)$ with $a=b$.
Let $S$ be black or white. For every $g\in G_{\K}$, also $g(S)$ is black or white, and if $g(S)$
has the same colour as $S$, then $g(S)=S$. Indeed, as $g\in G_\K$, $g\upharpoonright\{a,b\}$ preserves 
all relations from $L$, and as $g(S)$ has the same colour as $S$, it also preserves $<^*$. Hence, for every $(a,b)\in S$,
$g\upharpoonright\{a,b\}$ is a partial isomorphism of $F^*$, so it extends to some $h\in G_{\K^*}$ 
by homogeneity. Thus $g\cdot (a,b)=h\cdot (a,b)$, so $g\cdot (a,b)\in S$ and $g(S)=S$ follows.

We claim that $g^2\in G_{\K^*}$ for every $g\in G_\K$. Seeking for contradiction, assume that 
there is an $(a,b)\in F^2$ such that $a<^*b$ is not equivalent to
$g^2(a)<^*g^2(b)$. Then there is a  black or white 2-orbit~$S$ of $G_{\K^*}$ such that $g^2(S)$ has a different colour.
The colour of $g(S)$ equals  that of $S$ or $g^2(S)$, and consequently, $S=g(S)$ or $g(S)=g^2(S)$. 
The first case $S=g(S)$ is impossible, because it implies $S=g^2(S)$. The second case $g(S)=g^2(S)$ is also
impossible, because it implies the first via
$g(S)=g^{-1}(g^2(S))=g^{-1}(g(S))=S$.

It follows that  $G_\K/G_{\K^*}$  is an elementary abelian 2-group, i.e., it is the direct product of copies of the 2-element group.
\end{proof}

\begin{theorem}\label{prop:power2}
Let $\K$ be a relational Fra\"iss\'e class with companion $\K^*$. Assume that $G_\K$ is oligomorphic.
Then the following are equivalent.
\begin{enumerate}
\item $|G_{\K}:G_{\K^*}|$ is finite.
\item $|G_{\K}:G_{\K^*}|$ is a finite power of 2.
\item $M(G_{\K})$ is finite.
\item $|M(G_{\K})|$ is a finite power of 2.
\item $G_{\K^*}$ is normal in $G_{\K}$.
\end{enumerate}
\end{theorem}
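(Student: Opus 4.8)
The plan is to derive all five conditions from results already established, using that a companion $\K^*$ is by definition reasonable, Ramsey, and has the ordering property. By Theorem~\ref{thm:minflow} this gives $\Flim(\K^*)=(F,<^*)$ with $F=\Flim(\K)$, and $M(G_\K)=\overline{G_\K\cdot <^*}$ for the logic action of $G_\K$ on $\textit{LO}$. Since $G_{\K^*}=\Aut(F,<^*)$ is precisely the stabilizer of $<^*$ in this action, the orbit--stabilizer correspondence yields the identity $|G_\K:G_{\K^*}|=|G_\K\cdot <^*|$, and this bookkeeping is what ties the index-based conditions $(1),(2)$ to the flow-based conditions $(3),(4)$.

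First I would establish $(1)\Leftrightarrow(3)$ together with the relevant cardinality identity. The orbit $G_\K\cdot <^*$ lies inside $M(G_\K)=\overline{G_\K\cdot <^*}$. If $(1)$ holds this orbit is finite, hence closed in the Hausdorff space $\textit{LO}$, so $M(G_\K)=G_\K\cdot <^*$ is finite with $|M(G_\K)|=|G_\K:G_{\K^*}|$; conversely $(3)$ forces the contained orbit, and hence the index, to be finite. This proves $(1)\Leftrightarrow(3)$ and, in the finite case, the equality $|M(G_\K)|=|G_\K:G_{\K^*}|$, from which $(2)\Leftrightarrow(4)$ follows at once (a power of $2$ is in particular finite, so each of $(2),(4)$ entails $(1),(3)$, and then the cardinalities agree). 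The remaining equivalence $(3)\Leftrightarrow(4)$ is exactly Proposition~\ref{prop:boolean} in the forward direction, together with the trivial observation that a finite power of $2$ is finite in the backward direction.

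It then remains to incorporate the normality condition $(5)$. For $(1)\Rightarrow(5)$ I would argue that, $\K^*$ being an ordered Fra\"iss\'e class that is Ramsey, Theorem~\ref{thm:extremeamenability} makes $G_{\K^*}$ extremely amenable; it is closed as a point-stabilizer, and of finite index by $(1)$, so Lemma~\ref{lem:todorpart} immediately gives that $G_{\K^*}$ is normal (indeed clopen). For $(5)\Rightarrow(1)$ I would invoke Lemma~\ref{lem:forgetoligo} to transfer the standing assumption that $G_\K$ is oligomorphic to $G_{\K^*}$, so that Lemma~\ref{lem:normalfinite} applies and normality of $G_{\K^*}$ yields finite index, which is $(1)$.

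Since every implication reduces to a cited result, I do not expect a genuine obstacle here; the assembly is routine once the identity $|G_\K:G_{\K^*}|=|G_\K\cdot <^*|$ is in place. The only point deserving attention is that the oligomorphicity hypothesis is used in exactly one implication, namely $(5)\Rightarrow(1)$ through Lemma~\ref{lem:normalfinite}, and nowhere else; so the \emph{crux} is simply to recognize that this is the single place the hypothesis is indispensable, with all other implications holding for arbitrary relational $\K$ admitting a companion.
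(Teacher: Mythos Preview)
Your proposal is correct and follows essentially the same route as the paper: identify $M(G_\K)$ with $\overline{G_\K\cdot <^*}$ via Theorem~\ref{thm:minflow}, use the orbit--stabilizer bijection and closedness of finite sets in a Hausdorff space to get $(1)\Leftrightarrow(3)$ and $(2)\Leftrightarrow(4)$, invoke Proposition~\ref{prop:boolean} for $(3)\Leftrightarrow(4)$, and obtain $(1)\Rightarrow(5)$ from Lemma~\ref{lem:todorpart} (using extreme amenability of $G_{\K^*}$ via Theorem~\ref{thm:extremeamenability}) and $(5)\Rightarrow(1)$ from Lemma~\ref{lem:normalfinite} after transferring oligomorphicity via Lemma~\ref{lem:forgetoligo}. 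Your additional observation that oligomorphicity is needed only for $(5)\Rightarrow(1)$ is accurate and worth noting.
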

 
\begin{proof} 
By Theorem~\ref{thm:minflow} we have that $F^*:=\Flim(\K^*)=(F,<^*)$ for $F:=\Flim(\K)$, and that
$M(G_\K)$ is $\overline{G_\K\cdot <^*}$. Then $G_{\K^*}$ 
is oligomorphic by Lemma~\ref{lem:forgetoligo}, and extremely amenable by
Theorem~\ref{thm:extremeamenability}.
In a Hausdorff space a finite set equals its closure.
As the elements of $G_\K\cdot <^*$ are in a one-to-
one correspondence with $G_{\K}/G_{\K^*}$, we obtain $(1)\Leftrightarrow(3)$ and $(2)\Leftrightarrow(4)$. 
Proposition~\ref{prop:boolean} implies $(3)\Leftrightarrow(4)$, thus the first four items are equivalent. $(5)\Rightarrow(1)$ 
follows from Lemma~\ref{lem:normalfinite}, and Lemma~\ref{lem:todorpart} implies $(1)\Rightarrow(5)$.
\end{proof}

\begin{corollary}\label{cor:mystery2}
 Let $\K$ be a relational Fra\"iss\'e class that has a companion. Then the 
Ramsey degree for embeddings of $\K$ is infinite or a finite power of 2.
\end{corollary}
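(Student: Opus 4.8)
The plan is to reduce the statement directly to the two substantive results already in place, namely Theorem~\ref{thm:char} and Proposition~\ref{prop:boolean}, so that essentially no new argument is needed. First I would dispose of the case in which the Ramsey degree for embeddings of $\K$ is infinite, where there is nothing to prove. So assume it is finite and equal to some $d\in\N$. By Theorem~\ref{thm:char}, $M(G_\K)$ has size at most $d$; and since the degree is not at most $d-1$, the same theorem (applied with $d-1$ in place of $d$) rules out $|M(G_\K)|\le d-1$. This forces $|M(G_\K)|=d$, so the finite Ramsey degree for embeddings of $\K$ coincides exactly with $|M(G_\K)|$.

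It then remains to invoke Proposition~\ref{prop:boolean}. Its hypotheses are precisely that $\K$ is relational and has a companion, which are the standing assumptions of the corollary, together with the finiteness of $M(G_\K)$, which we have just verified. Its conclusion is that $|M(G_\K)|$ is a power of $2$, and since this cardinality equals $d$, the Ramsey degree for embeddings of $\K$ is a finite power of $2$, as desired. I do not expect any genuine obstacle: all the real work sits in Theorem~\ref{thm:char}, which translates the combinatorial degree into the size of the universal minimal flow via Propositions~\ref{prop:ramtoam} and~\ref{prop:amtoram}, and in Proposition~\ref{prop:boolean}, whose proof realizes $G_\K/G_{\K^*}$ as an elementary abelian $2$-group. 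The only point deserving a moment's care is the bookkeeping that upgrades the ``at most $d$'' formulation of Theorem~\ref{thm:char} to the exact equality $|M(G_\K)|=d$, and this is immediate from the minimality of $d$ in the definition of the Ramsey degree for embeddings.
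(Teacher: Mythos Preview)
Your proposal is correct and follows essentially the same route as the paper, which simply cites Proposition~\ref{prop:boolean} and Theorem~\ref{thm:char}. Your additional remark that Theorem~\ref{thm:char} applied with both $d$ and $d-1$ yields the exact equality $|M(G_\K)|=d$ just makes explicit the one step the paper leaves implicit.
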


\begin{proof} 
By  Proposition~\ref{prop:boolean} and Theorem~\ref{thm:char}.
\end{proof}

\begin{proof}[Proof of Theorem~\ref{thm:mystery}.] By Proposition~\ref{prop:ramseycompanion} and Corollary~\ref{cor:mystery2}.
\end{proof}

\section{Acknowledgements} The authors are indebted to Todor Tsankov for the elegant proof of Proposition~\ref{prop:todor} which is shorter and more general than their original one, and to Manuel Bodirsky, Lionel Nguyen van Th\'{e} and Lyubomyr Zdomskyy for their many comments on the manuscript.

\bibliographystyle{acm}

\end{document}